\documentclass[12pt, a4paper]{article}

\usepackage{a4,amsmath,amssymb, amsthm, latexsym, color, graphicx,url}
\usepackage{longtable}
\usepackage{tikz}
\usetikzlibrary{arrows.meta}
\usetikzlibrary{decorations.markings}
\usetikzlibrary{calc}

\usepackage{fullpage}
\usepackage{enumitem}
\usepackage{xcolor}
\bibliographystyle{abbrv}

\newtheorem{theorem}{Theorem}[section]
\newtheorem{proposition}[theorem]{Proposition}
\newtheorem{lemma}[theorem]{Lemma}
\newtheorem{corollary}[theorem]{Corollary}

\theoremstyle{definition}
\newtheorem{example}[theorem]{Example}
\newtheorem{definition}[theorem]{Definition}
\newtheorem{remark}[theorem]{Remark}

\title{Internal and external partial difference families and cyclotomy}
\author{Sophie Huczynska and Laura Johnson \thanks{email: sh70@st-andrews.ac.uk,  lj68@st-andrews.ac.uk}}
\date{School of Mathematics \& Statistics, University of St Andrews, St Andrews, KY16 9SS, Scotland, UK }
%\date{MSC2020: 05B10, 11T22, 94A60; \\Keywords: disjoint partial difference family, external difference family, partial difference set, cyclotomy}
\begin{document}
\maketitle

\begin{abstract}
We introduce the concept of a disjoint partial difference family (DPDF) and an external partial difference family (EPDF), a natural generalization of the much-studied disjoint difference family (DDF), external difference family (EDF) and partial difference set (PDS).  We establish properties and indicate connections to other recently-studied combinatorial structures.  We show how DPDFs and EPDFs may be formed from PDSs, and present various cyclotomic constructions for DPDFs and EPDFs.  As part of this, we develop a unified cyclotomic framework, which yields some known results on PDSs, DDFs and EDFs as special cases.
\end{abstract}

\section{Introduction}

Difference sets and difference families (sets, or families of sets, in which every non-zero group element occurs with constant frequency as an internal difference within the sets) were introduced in the 1930s and have been very well-explored in the literature.  Difference families are useful for constructing balanced incomplete block designs via development (see \cite{ColDin}, \cite{Wil}).  Disjoint difference families (difference families whose sets are disjoint) have recently received particular attention \cite{Bur}: there is design theory motivation for asking whether it is always possible to find disjoint base blocks for designs  (see Novak's conjecture and its generalization in \cite{Nov}, \cite{FeHoWa}), and there are information theory applications (\cite{NgPa}).   In the early 2000s, external difference families were introduced (\cite{OgaKurStiSai}, \cite{PatSti}), in which each non-zero element occurs with constant frequency as an external difference between distinct sets.  These structures have strong links to cryptography.  Finally, the concept of a partial difference set  (a set in which each non-zero element occurs with one of two frequencies according to whether it lies in the set) is a natural generalization of a difference set, which arises naturally and has various useful applications (\cite{Ma84}, \cite{Ma})).

However the partial analogue of a difference family has not previously been studied.  Related ideas are present in the literature; for example, work has been done on structures called almost difference families (\cite{DinYin}), where the non-identity elements occur with two frequencies $\lambda$ and $\lambda+1$; here the sets need not be disjoint and there is no structural condition determining the sets of elements which occur with each frequency.   A specialized version of an external partial difference family was introduced in \cite{DavHucMul}, in which the sets partition the non-zero group elements and the two frequencies are associated with membership/non-membership of unions of the sets in the family.  Finally, ideas of the type explored in this paper occur implictly in \cite{MutTon}, where the authors seek bounded versions of external difference families in $\mathbb{Z}_n$ (called difference systems of sets) and use conditions on internal differences to guarantee lower-bounded external differences.  We set up a more general version of both disjoint difference families and external difference families, precisely analogous to the definition of partial difference set. We will see these objects arise naturally.

Much of our approach will be using cyclotomy in finite fields.  There is a long history of using cyclotomic methods to construct internal and external difference families, dating back to \cite{Wil}; see for example  \cite{Bao}, \cite{ChaDin}, \cite{CheLinLin}, \cite{DavHucMul}, \cite{HuaWu}, \cite{Li}, \cite{MutTon}, \cite{WenYanFuFen} and \cite{Xia}.  Difference families whose sets are cyclotomic classes are known as radical (\cite{Bur2}).  

Throughout, we let $G$ be a group, written additively, and let $G^*$ denote $G \setminus \{0\}$.  For a subset $D$ of $G$, we define the multiset $\Delta(D)=\{ x-y: x \neq y \in D \}$ and for disjoint sets $D_1, D_2 \subseteq G$, we define the multiset $\Delta(D_1, D_2)=\{ x-y: x \in D_1, y \in D_2 \}$.  For a subset $D$ of $G$ and non-negative integer $\lambda$, we denote by $\lambda D$ the multiset comprising $\lambda$ copies of each element of $D$.

The following definitions are well-known (see \cite{PatSti}, \cite{Ma84}); we follow the notation of \cite{PatSti}.
\begin{definition}\label{def_basic}
Let $G$ be a group of order $n$.
\begin{itemize}
\item[(i)] A subset $D$ of $G$, with cardinality $k$, forms an $(n,k,\lambda)$-Difference Set (DS) if the following multiset equation holds:
$$ \Delta(D) = \lambda(G^*).$$

\item[(ii)] A subset $P$ of $G$, with cardinality $k$, forms an $(n,k,\lambda,\mu)$-Partial Difference Set (PDS) if the following multiset equation holds:
$$ \Delta(P) = \lambda(P) + \mu(G^*\backslash{P}).$$
Note this is a Difference Set if $\lambda=\mu$.  We shall say that a PDS is \emph{proper} if $\lambda \neq \mu$. 

\item[(iii)] A collection of $m$ disjoint $k$-subsets $\mathcal{S}^{\prime} = \{D_1,...,D_m\}$ in $G$ forms an $(n,m,k,\lambda)$-Disjoint Difference Family (DDF) of $G$ if the following multiset equation holds;
$$\cup_{i=1}^m \Delta(D_i) = \lambda(G^*).$$
If we do not require the subsets to be pairwise disjoint, this structure is called a Difference Family (DF).  

\item[(iv)] A collection of $m$ disjoint $k$-subsets $\mathcal{S}^{\prime} = \{D_1,\ldots,D_m\}$ in \rm{G} forms an $(n,m,k,\lambda)$-External Difference Family (EDF) of \rm{G} if the following multiset equations holds;
\begin{equation*}
\cup_{i,j: i \neq j} \Delta(D_i,D_j)=\lambda(G^*).
\end{equation*}

\item[(v)] A collection of $m$ disjoint $k$ subsets $\mathcal{S}^{\prime} = \{D_1,\ldots,D_m\}$ in \rm{G} forms an, $(n,m,k,\lambda)$-Strong External Difference Family (SEDF) of \rm{G} if the following multiset equation holds for each $D_i \in \mathcal{S}^{\prime}$;
\begin{equation*}
\cup_{j: j \neq i} \Delta(D_i,D_j) = \lambda(\rm{G}^*).
\end{equation*}
If the set-sizes are allowed to vary, with $|A_i|=k_i$, and $\lambda$ is replaced by $\lambda_i$ in the above equation, then the structure is called a Generalised Strong External Difference Family (GSEDF).

\item[(vi)] A collection of $m$ disjoint subsets $\mathcal{S}^{\prime} = \{D_1,\ldots,D_m\}$ in \rm{G}, comprising $c_h$ subsets of size $k_h$ ($1 \leq h \leq \ell$) with $m=\sum_i c_i$, forms an $(n,m; c_1,\ldots, c_{\ell}; k_1, \ldots, k_{\ell}; \lambda_1, \ldots, \lambda_{\ell})$-Partitioned External Difference Family (PEDF) of \rm{G} if the following multiset equations holds for each $1 \leq h \leq \ell$:
\begin{equation*}
\cup_{i: |A_i|=k_h} \cup_{j: j \neq i} \Delta(D_i,D_j)=\lambda_i (G^*).
\end{equation*}
Both EDFs and GSEDFs are special cases of PEDFs.
\end{itemize}
\end{definition}

We make the following new definitions.

\begin{definition}\label{PDF}
Let $\mathcal{S}^{\prime}$ be a collection of $m$ disjoint subsets $\{ D_1, \ldots, D_m \}$ in $G^*$, where each $D_i$ has cardinality $k$, and let $S=\cup_{i=1}^m D_i$.  Then $\mathcal{S}^{\prime}$ is said to form an $(n,m, k, \lambda, \mu)$ Disjoint Partial Difference Family (DPDF) of $G$ if the following multiset equation holds: 
$$ \cup_{i=1}^m \Delta(D_i)= \lambda S + \mu (G^* \setminus S). $$
If $\lambda \neq \mu$ then $\mathcal{S}^{\prime}$ is called \textit{proper}. 
\end{definition}

\begin{definition}\label{EPDF}
Let $\mathcal{S}^{\prime}$ be a collection of $m$ disjoint subsets $\{ D_1, \ldots, D_m \}$ in $G^*$, where each $D_i$ has cardinality $k$, and let $S=\cup_{i=1}^m D_i$.  Then $\mathcal{S}^{\prime}$ is said to form an $(n,m, k, \lambda, \mu)$ External Partial Difference Family (EPDF) of $G$ if the following multiset equation holds: 
$$ \cup_{i,j: i \neq j} \Delta(D_i, D_j)= \lambda S + \mu (G^* \setminus S).$$
If $\lambda \neq \mu$ then $\mathcal{S}^{\prime}$ is called \textit{proper}.
\end{definition}

In general we consider $S \neq G^*$ but we adopt the convention that taking $S=G^*$ corresponds to the situation when $\mathcal{S}^{\prime}$ is a DDF or EDF.

If $\lambda=\mu$, we obtain a DDF and EDF respectively in the above definitions. If $m=1$ we obtain a PDS in Definition \ref{PDF}. Some special cases of DPDFs give examples of almost difference families (see \cite{DinYin}), while EPDFs give examples of bounded external difference families (see \cite{PatSti}).  Finally, these objects (and the cyclotomic construction approach) are also related to supplementary difference sets \cite{Wal}.  We note that the internal and external partial difference families defined in this paper are not related to the concept of a difference family being partial due to having a short base block.

\begin{example}\label{ex:13}
Let $G=(GF(13),+)$.  Consider the subsets $D_1=\{1,3,9\}$ and $D_2=\{4,10,12\}$ and let $\mathcal{S}^{\prime}=\{D_1,D_2\}$, so $S=\{1,3,4,9,10,12\}$.  Then the multiset $\Delta(D_1) \cup\Delta(D_2)$ consists of two copies of $\{2,5,6,7,8,11\}$ (the complement of $S$ in $GF(13)^*$).  The multiset of external differences $\Delta(D_1,D_2) \cup \Delta (D_2,D_1)$ comprises two copies of $S$ and one copy of its complement.  Hence  $\mathcal{S}^{\prime}$ is a $(13,2,3,0,2)$-DPDF and a $(13,2,3,2,1)$-EPDF.
\end{example}

In this paper, we establish properties and exhibit constructions for DPDFs and EPDFs and show connections to other recently-studied combinatorial structures.  Section 2 presents basic results.  From Section 3 onwards, we focus on DPDFs and EPDFs in finite fields $GF(q)$.  We develop a cyclotomic framework which allows us to obtain structural information and thereby derive results beyond those obtainable solely from specific values of cyclotomic numbers.  Let $C_i^e$ denote the cyclotomic class of order $e$; we focus on two main questions:
\begin{itemize}
\item[(i)] Is (a subset of) $\{C_0^e, C_1^e, \ldots C_{e-1}^e\}$ a DPDF or EPDF if $C_0^e$ is a PDS? 
\item[(ii)] Is $\{C_0^e, C_{\epsilon}^e, \ldots C_{e-\epsilon}^e\}$ a DPDF or EPDF if $C_0^{\epsilon}=\cup_{j=0}^{e/\epsilon - 1} C_{j \epsilon}^e$ is a PDS (including DS)?
\end{itemize}
We also investigate conditions under which a collection of disjoint $\cup_{i \in I} C_i^e$ forms a DPDF or EPDF.  We prioritise structures which are simultaneously DPDFs and EPDFs; where possible, we aim to establish DPDF/EPDF results whose parameters may be obtained without direct field element calculation.   In so doing, we prove a generalization of a result on PDSs by Calderbank and Kantor \cite{CalKan}.  We also obtain various DPDF/EPDF results for specific values of $e$ using values of cyclotomic numbers.   
 
We will indicate in the text when known results occur as special cases of our approach.   While none of our proofs depend on computation, the computer algebra system GAP \cite{GAP} was used to find and verify examples.
 
\section{Basic results}

We begin by establishing some basic definitions and results.

\begin{definition}\label{def1}
Let $\mathcal{S}^{\prime}=\{D_1,D_2,...,D_m\}$ denote a collection of $m$ pairwise disjoint subsets of $G$. We define:
\begin{itemize}
\item[(i)] ${\rm Int}(\mathcal{S}^{\prime}) = \bigcup\limits^{m}_{i=1}\Delta(D_i)$
\item[(ii)]
${\rm Ext}(\mathcal{S}^{\prime}) = \bigcup\limits^{m}_{i,j:i\neq{j}}\Delta(D_i,D_j)$.
\end{itemize}
\end{definition}

\begin{remark}\label{rem1}
Let $S$ be a subset of a group $G$ partitioned by $\mathcal{S}^{\prime}=\{D_1,...,D_m\}$, where $\mathcal{S}^{\prime}$ is a collection of $m$ disjoint $k$-element subsets.  Then the following multiset equation holds:
$${\rm Int}(\mathcal{S}^{\prime}) \cup {\rm Ext}(\mathcal{S}^{\prime})= \Delta(S).
$$
\end{remark}

It is proved in \cite{ChaDin} that if $\mathcal{S}^{\prime}$ partitions $G$ (respectively, $G^*$) then it is an EDF if and only if it is a DDF.  In fact this holds when $\mathcal{S}^{\prime}$ partitions any difference set.

\begin{theorem}\label{thm1}
If $\mathcal{S}^{\prime}$ partitions an $(n,mk,\lambda)$-difference set $D$ then it is an $(n,m,k,\mu)$-DDF if and only if it is an $(n,m,k,\lambda-\mu)$-EDF.
\end{theorem}

We now ask: what is the situation when an EDF or DDF partitions a PDS?  We see that the concepts of DPDF and EPDF arise naturally here:

\begin{theorem}\label{thm2}
\begin{itemize}
\item[(i)]Let $P$ be an $(n,mk,\sigma,\mu)$-Partial Difference Set partitioned by the sets $\mathcal{S}^{\prime}=\{D_1, \ldots, D_m \}$. Then $\mathcal{S}^{\prime}$ is an $(n,m,k, \lambda)$-EDF if and only if $\mathcal{S}^{\prime}$ is a proper $(n,m,k,\sigma-\lambda, \mu-\lambda)$-DPDF.
\item[(ii)] Let $P$ be an $(n,mk,\sigma,\mu)$-Partial Difference Set partitioned by the sets $\mathcal{S}^{\prime}=\{D_1,\ldots,D_m\}$. Then $\mathcal{S}^{\prime}$ is an $(n,m,k,\lambda)$-DDF if and only if $\mathcal{S}^{\prime}$ is a proper $(n,m,k,\sigma-\lambda,\mu-\lambda)$-EPDF.
\end{itemize}
\end{theorem}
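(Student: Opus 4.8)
The plan is to obtain both biconditionals from the single multiset identity of Remark \ref{rem1}, specialised to $S=P$. Since $\mathcal{S}'$ partitions $P$ we have $S=\cup_{i=1}^m D_i=P$, so Remark \ref{rem1} reads
$$ {\rm Int}(\mathcal{S}') \cup {\rm Ext}(\mathcal{S}') = \Delta(S) = \Delta(P). $$
Because $P$ is an $(n,mk,\sigma,\mu)$-PDS, Definition \ref{def_basic}(ii) identifies the right-hand side explicitly as $\Delta(P)=\sigma(P)+\mu(G^*\setminus P)$. The whole argument then reduces to multiset arithmetic, using the disjoint decomposition $G^* = P \sqcup (G^*\setminus P)$ (valid since the $D_i$, and hence $P$, lie in $G^*$), so that $\lambda(G^*)=\lambda(P)+\lambda(G^*\setminus P)$.

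For part (i) I would argue as follows. If $\mathcal{S}'$ is an $(n,m,k,\lambda)$-EDF then ${\rm Ext}(\mathcal{S}')=\lambda(G^*)$, and subtracting this sub-multiset from $\Delta(P)$ gives
$$ {\rm Int}(\mathcal{S}') = \sigma(P)+\mu(G^*\setminus P) - \lambda(P) - \lambda(G^*\setminus P) = (\sigma-\lambda)(P) + (\mu-\lambda)(G^*\setminus P), $$
which is exactly the DPDF condition with parameters $(\sigma-\lambda,\mu-\lambda)$ relative to $S=P$. The converse is symmetric: given that DPDF equation, subtract ${\rm Int}(\mathcal{S}')$ from $\Delta(P)$ and recombine the coefficients over $P$ and $G^*\setminus P$ to recover ${\rm Ext}(\mathcal{S}')=\lambda(G^*)$, an EDF. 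Part (ii) is the exact dual, interchanging the roles of ${\rm Int}$ and ${\rm Ext}$: a DDF supplies ${\rm Int}(\mathcal{S}')=\lambda(G^*)$, and the identical subtraction yields ${\rm Ext}(\mathcal{S}')=(\sigma-\lambda)(P)+(\mu-\lambda)(G^*\setminus P)$, the EPDF condition, with the converse obtained by reversing the computation.

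The only point needing care, rather than any genuine obstacle, is the word \emph{proper}. The derived parameters satisfy $\sigma-\lambda \neq \mu-\lambda$ precisely when $\sigma \neq \mu$, so the resulting DPDF (respectively EPDF) is proper exactly when $P$ is a proper PDS; I would therefore make explicit that the biconditional is asserted for proper $P$, since in the degenerate case $\sigma=\mu$ the set $P$ is a difference set and the relevant equivalence is instead the DDF/EDF statement of Theorem \ref{thm1}. I expect the argument to be short: all the substantive content is carried by Remark \ref{rem1} together with the PDS equation, and what remains is bookkeeping of multiset coefficients over the partition $G^*=P\sqcup(G^*\setminus P)$.
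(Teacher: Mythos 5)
Your proposal is correct and follows essentially the same route as the paper: both invoke Remark \ref{rem1} with $S=P$, substitute the PDS equation for $\Delta(P)$, and subtract the EDF (resp.\ DDF) multiset $\lambda(G^*)=\lambda(P)+\lambda(G^*\setminus P)$ to read off the DPDF (resp.\ EPDF) parameters, with the converse by the symmetric subtraction. Your added observation that the resulting family is \emph{proper} precisely when $\sigma\neq\mu$ is a sound clarification of a point the paper leaves implicit, but it does not alter the argument.
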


\begin{proof}
\begin{itemize}
\item[(i)] Since $P$ is an $(n,mk,\sigma,\mu)$-Partial Difference Set partitioned by $\mathcal{S}^{\prime} = \{D_1, \ldots, D_m\}$, by Remark \ref{rem1}, this means:

\begin{equation}
{\rm{Int}}(\mathcal{S}^{\prime}) \cup{\rm Ext}(\mathcal{S}^{\prime}) = \sigma(P) + \mu(G^*\backslash{P}).
\end{equation}

If $\mathcal{S}^{\prime}$ forms an $(n,m,k, \lambda)$-EDF, this then implies $\lambda \leq min(\sigma, \mu)$ and:

\begin{equation}
{\rm{Ext}}(\mathcal{S}^{\prime}) = \bigcup\limits^{m}_{i,j:i\neq{j}}\Delta(D_i,D_j) = \lambda(G^*). 
\end{equation}
Hence
\begin{equation}
{\rm{Int}}(\mathcal{S}^{\prime}) = (\sigma-\lambda)(P) + (\mu-\lambda)(G^*\backslash{P}).   
\end{equation}
Thus $\mathcal{S}^{\prime}$ is a proper $(n,m,k,\sigma-\lambda,\mu-\lambda)$-DPDF. The reverse direction is similar.

\item[(ii)] Similar to part (i).
\end{itemize}
\end{proof}

A similar result holds in the more general situation for DPDFs and EDPFs whose union form a difference set or partial difference set.  
\begin{theorem}\label{thm3}
\begin{itemize}
\item[(i)] Let $D$ be an $(n,mk,\lambda)$-DS and let $S'=\{D_1,...,D_m\}$ be a partition of $D$. Then $S'$ is an $(n,m,k,\sigma,\mu)$-EPDF if and only if $S'$ is an $(n,m,k,\lambda-\sigma,\lambda-\mu)$-DPDF.
\item[(ii)] Let $P$ be an $(n,mk,\lambda,\mu)$-PDS and let $S'=\{D_1,...,D_m\}$ be a partition of $P$.  Then $S'$ is an $(n,m,k,\lambda^*,\mu^*)$-EPDF if and only if $S'$ is an $(n,m,k,\lambda-\lambda^*,\mu-\mu^*)$-DPDF.
\end{itemize}
\end{theorem}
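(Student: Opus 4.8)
The plan is to prove both parts by the same mechanism that drove Theorem \ref{thm2}: the key identity is Remark \ref{rem1}, which tells us that for any partition $\mathcal{S}' = \{D_1,\ldots,D_m\}$ of a set $S$, we have the multiset equation $\mathrm{Int}(\mathcal{S}') \cup \mathrm{Ext}(\mathcal{S}') = \Delta(S)$. Since $\mathrm{Int}(\mathcal{S}')$ and $\mathrm{Ext}(\mathcal{S}')$ are exactly the left-hand sides appearing in the definitions of DPDF and EPDF, respectively, each part reduces to subtracting one multiset equation from another. The only genuine input beyond this identity is the value of $\Delta(S)$, which is precisely what the hypothesis (DS in part (i), PDS in part (ii)) supplies.

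For part (i), I would start from the assumption that $D$ is an $(n,mk,\lambda)$-DS, so $\Delta(D) = \lambda(G^*)$, and that $\mathcal{S}'$ partitions $D$, so $S = D$. By Remark \ref{rem1},
\begin{equation*}
\mathrm{Int}(\mathcal{S}') \cup \mathrm{Ext}(\mathcal{S}') = \lambda(G^*) = \lambda(S) + \lambda(G^* \setminus S).
\end{equation*}
Now suppose $\mathcal{S}'$ is an $(n,m,k,\sigma,\mu)$-EPDF, meaning $\mathrm{Ext}(\mathcal{S}') = \sigma(S) + \mu(G^*\setminus S)$. Subtracting this from the displayed equation gives $\mathrm{Int}(\mathcal{S}') = (\lambda-\sigma)(S) + (\lambda-\mu)(G^*\setminus S)$, which is exactly the DPDF condition with the claimed parameters. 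The reverse implication follows by the same subtraction carried out the other way, so this direction is symmetric and needs no separate argument beyond a remark.

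Part (ii) is structurally identical, with $D$ replaced by the PDS $P$: here $\Delta(S) = \Delta(P) = \lambda(P) + \mu(G^*\setminus P) = \lambda(S) + \mu(G^*\setminus S)$ since $S = P$, and the same subtraction converts an EPDF with parameters $(\lambda^*,\mu^*)$ into a DPDF with parameters $(\lambda-\lambda^*,\mu-\mu^*)$ and vice versa. I do not expect any serious obstacle here, as the whole argument is bookkeeping on multiset equations once Remark \ref{rem1} is in hand; the one point that merits a sentence of care is confirming that the resulting frequencies are genuinely nonnegative and that the EPDF/DPDF being proper (i.e. $\sigma \neq \mu$, or $\lambda^* \neq \mu^*$) corresponds correctly to the converse object being proper, so that the word ``proper'' is used consistently — though note that, unlike Theorem \ref{thm2}, the statement here does not assert properness, so this is only a consistency check rather than a claim to be established.
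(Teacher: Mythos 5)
Your proposal is correct and follows exactly the argument the paper intends: the paper states Theorem \ref{thm3} without proof, but its proof of Theorem \ref{thm2} uses precisely this mechanism (apply Remark \ref{rem1} to write ${\rm Int}(\mathcal{S}')\cup{\rm Ext}(\mathcal{S}')=\Delta(S)$, substitute the DS/PDS hypothesis for $\Delta(S)$, and subtract the EPDF equation to obtain the DPDF equation, with the converse by the symmetric subtraction). Your closing remark that properness is not asserted here, unlike in Theorem \ref{thm2}, is also consistent with the paper's statement.
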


An illustration of DPDF and an EPDF partitioning a partial difference set is shown in Example \ref{ex:13}, while examples partitioning a difference set are given in Remark \ref{Hadamard}.

Finally, we observe that an object which is simultaneously a DPDF and an EPDF has the following structural requirement:

\begin{theorem}\label{thm4}
If $S'=\{D_1,...,D_m\}$ is a partition of a set $S$, such that $S'$ is an $(n,m,k,\lambda^*,\mu^*)$-DPDF and an $(n,m,k,\lambda,\mu)$-EPDF, then $S$ is an $(n,mk,\lambda+\lambda^*,\mu+\mu^*)$-PDS when $\lambda+\lambda^* \neq \mu+\mu^*$ and an $(n,mk,\sigma)$-DS when $\lambda+\lambda^* = \mu+\mu^* = \sigma$.
\end{theorem}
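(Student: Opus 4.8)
The plan is to read off both defining multiset equations and combine them via Remark \ref{rem1}. Since $\mathcal{S}'$ is an $(n,m,k,\lambda^*,\mu^*)$-DPDF, the definition gives
\begin{equation*}
{\rm Int}(\mathcal{S}') = \lambda^* S + \mu^*(G^* \setminus S),
\end{equation*}
and since it is an $(n,m,k,\lambda,\mu)$-EPDF,
\begin{equation*}
{\rm Ext}(\mathcal{S}') = \lambda S + \mu(G^* \setminus S).
\end{equation*}
First I would invoke Remark \ref{rem1}, which asserts that ${\rm Int}(\mathcal{S}') \cup {\rm Ext}(\mathcal{S}') = \Delta(S)$ whenever $\mathcal{S}'$ partitions $S$ into disjoint $k$-sets; this is exactly the hypothesis here.

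The core step is then a single multiset addition. Taking the union of the two displayed equations and collecting coefficients yields
\begin{equation*}
\Delta(S) = (\lambda + \lambda^*)\, S + (\mu + \mu^*)\,(G^* \setminus S).
\end{equation*}
Because $\mathcal{S}'$ consists of $m$ pairwise disjoint $k$-subsets whose union is $S$, we have $|S| = mk$. Comparing the displayed equation with Definition \ref{def_basic}(ii), this is precisely the defining condition for $S$ to be an $(n, mk, \lambda+\lambda^*, \mu+\mu^*)$-Partial Difference Set.

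Finally I would split into the two cases. When $\lambda + \lambda^* \neq \mu + \mu^*$ the PDS is proper, giving the first conclusion directly. When $\lambda + \lambda^* = \mu + \mu^* = \sigma$, the equation collapses to $\Delta(S) = \sigma S + \sigma(G^* \setminus S) = \sigma(G^*)$, which is the Difference Set condition of Definition \ref{def_basic}(i) with parameters $(n, mk, \sigma)$. I do not anticipate a genuine obstacle here: the entire argument is an application of Remark \ref{rem1} followed by routine multiset bookkeeping. The only point requiring mild care is tracking that the coefficient of each element is determined solely by its membership in $S$ versus $G^* \setminus S$, so that the sum of a DPDF equation and an EPDF equation again respects the two-frequency structure; this is immediate since both constituent equations are indexed by the same partition of $G^*$ into $S$ and its complement.
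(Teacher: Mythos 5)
Your proof is correct and is exactly the argument the paper intends: the paper states Theorem \ref{thm4} without an explicit proof, but your combination of Remark \ref{rem1} with the two defining multiset equations is precisely the technique used for the neighbouring Theorems \ref{thm2} and \ref{thm3}. The case split at the end (proper PDS versus DS when $\lambda+\lambda^*=\mu+\mu^*$) is handled correctly.
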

This motivates our approach to finding structures which are simultaneously DPDFs and EPDFs.

Clearly partial difference sets are of crucial importance when constructing and understanding DPDFs. PDSs have received attention in the literature and various results are known - see \cite{Ma84} and \cite{Ma}. We give a summary of basic results from \cite{Ma84}.  
\begin{theorem}\label{thm5}
Let $G$ be a group of order $n$ and let $D$ be a subset of $G$.
\begin{itemize}
\item[(i)] If D is a difference set then it is a partial difference set; its complement $G \setminus D$ is also a difference set and hence a partial difference set.
\item[(ii)] If D is a partial difference set which is not a difference set then $D=-D$.
\item[(iii)] If $D$ is a partial difference set with $D=-D$, then $D\setminus \{0\}, D \cup \{0\}, G \setminus D, (G \setminus D) \setminus \{0\}$ and $(G \setminus D) \cup \{0\}$ are partial difference sets.
\item[(iv)] If $D$ is a non-trivial subgroup of $G$, of order $k$, then it is a $(n,k,k,0)$-PDS.
\item[(v)] If $D$ is an $(n,k,\lambda,0)$-PDS with $\lambda \neq 0$ then $D \cup \{0\}$ is a subgroup of $G$.
\end{itemize}
\end{theorem}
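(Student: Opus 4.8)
The plan is to translate each statement into the integer group ring $\mathbb{Z}[G]$, writing $D=\sum_{d\in D}d$ and $D^{(-1)}=\sum_{d\in D}(-d)$, and letting $1$ denote the basis element corresponding to $0\in G$ and $G$ denote $\sum_{g\in G}g$. The multiplicity of a nonzero $g$ in $\Delta(D)$ is the coefficient of $g$ in $D\,D^{(-1)}$, so (for $0\notin D$) the set $D$ is an $(n,k,\lambda,\mu)$-PDS exactly when
\[
D\,D^{(-1)} = (k-\mu)\,1 + (\lambda-\mu)\,D + \mu\,G .
\]
Two identities will be used repeatedly: $G\cdot D=kG$ and $G\cdot G=nG$. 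Alongside this algebra, the single most useful structural fact is that $\Delta(D)$ is invariant under $g\mapsto -g$, because $(x,y)\mapsto(y,x)$ carries pairs with $x-y=g$ bijectively onto pairs with $x-y=-g$.

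For (i), a difference set is precisely the case $\lambda=\mu$ of the displayed equation, so it is a PDS. For the complement $\overline{D}=G-D$ I would expand $\overline{D}\,\overline{D}^{(-1)}=G^2-G\,D^{(-1)}-D\,G+D\,D^{(-1)}$ and substitute the two identities together with the difference-set relation $D\,D^{(-1)}=(k-\lambda)\,1+\lambda G$; the first three terms collapse to a multiple of $G$, leaving $\overline{D}$ a difference set (with parameter $n-2k+\lambda$) and hence a PDS. For (ii), the negation-invariance of $\Delta(D)$ forces the multiplicity of $g$ to equal that of $-g$ in the PDS equation; when $\lambda\neq\mu$ these two multiplicities are attached to disjoint element-sets, so $g\in D\iff -g\in D$, i.e. $D=-D$.

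Parts (iv) and (v) I would treat combinatorially. If $D$ is a subgroup of order $k$, then for nonzero $g\in D$ each $y\in D$ gives a unique $x=g+y\in D$ (multiplicity $k$), while for $g\notin D$ closure forbids any solution (multiplicity $0$), which is exactly the $(n,k,k,0)$-PDS condition. For (v), the hypothesis $\mu=0$ means every nonzero difference $d_1-d_2$ of elements of $D$ again lies in $D$, so $D\cup\{0\}$ is closed under subtraction; since $\lambda\neq 0=\mu$ the PDS is proper and part (ii) yields $D=-D$, giving closure under negation as well. A nonempty finite subset closed under subtraction is a subgroup, which settles (v).

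The main effort, and the step I expect to be the obstacle, is (iii). From the group-ring equation $D^2=(k-\mu)\,1+(\lambda-\mu)\,D+\mu\,G$ (valid because $D=D^{(-1)}$), I would verify each listed set by direct manipulation: for $D\cup\{0\}$ expand $(D+1)^2=D^2+2D+1$; for $G\setminus D$ reuse the complement computation of (i); and for the remaining sets compose these two operations. The delicate point is consistent bookkeeping of the identity term $1$ as sets are enlarged or reduced by $\{0\}$, since adjoining or deleting $0$ shifts the constant coefficient without disturbing the PDS parameters $(\lambda,\mu)$; verifying that each resulting expression still has the shape $(\,\cdot\,)\,1+(\lambda'-\mu')\,D'+\mu'\,G$ for suitable parameters is where the care is required.
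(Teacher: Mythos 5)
The paper does not actually prove Theorem~\ref{thm5}: it is stated as ``a summary of basic results from \cite{Ma84}'' and the authors simply cite Ma's work, so there is no in-paper proof to compare against. Judged on its own merits, your argument is correct and is essentially the standard one (group-ring identities plus the symmetry $\Delta(D)=-\Delta(D)$, which is also how these facts are established in the cited literature): parts (i), (ii), (iv) and (v) are complete and sound, including the observation that closure of $D\cup\{0\}$ under subtraction already yields a subgroup. The one point to tighten is in your sketch of (iii): the claim that adjoining or deleting $0$ ``shifts the constant coefficient without disturbing the PDS parameters $(\lambda,\mu)$'' is not right. Precisely because $D=-D$, each $d\in D$ gains the two new representations $d=d-0$ and $d=0-(-d)$ when $0$ is adjoined, so $(D+1)^2=(k-\mu+1)\,1+(\lambda-\mu+2)\,D+\mu\,G$, which matches the PDS shape for $D\cup\{0\}$ with parameters $(n,k+1,\lambda+2,\mu)$; deleting $0$ likewise sends $\lambda$ to $\lambda-2$, and the complement computation gives $(n,n-k,n-2k+\mu,n-2k+\lambda)$. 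Your closing sentence does allow for ``suitable parameters'' $\lambda',\mu'$, so the method self-corrects once the expansion is carried out, but as written the sketch contains this internal inconsistency and (iii) is the one part you have not actually verified.
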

A partial difference set $D$ in a group $G$ is traditionally called \emph{regular} if $D=-D$ and $0 \not\in D$. 

We end this section with another situation where DPDFs naturally arise.  In \cite{PatSti}, it was proven that
\begin{theorem}
Suppose $A_1,\ldots,A_m$ is a partition of $G$ (where $|G| = n$) such that there are $c_h$ subsets of size $k_h$ for $1 \leq h \leq \ell$. Then $A_1,\ldots,A_m$ is an $(n,m;c_1,\ldots,c_{\ell};k_1,\ldots,k_{\ell};\lambda_1,\ldots,\lambda_{\ell})$-PEDF if and only if the subsets of cardinality $k_h$ form an $(n,c_h,k_h,c_hk_h-\lambda_h)$-DDF in G for $1 \leq h \leq \ell$.
\end{theorem}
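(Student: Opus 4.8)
The plan is to reduce the stated equivalence to a single multiset identity, holding separately for each size class, which decomposes the complete set of differences emanating from each block into its internal and external parts. The key observation is that, since $\{A_1,\dots,A_m\}$ partitions all of $G$, for a fixed block $A_i$ the remaining blocks collectively cover $G\setminus A_i$. Hence the external differences out of $A_i$ satisfy
$$\bigcup_{j:j\neq i}\Delta(A_i,A_j)=\Delta(A_i,G\setminus A_i)=\{x-y:x\in A_i,\ y\in G\setminus A_i\}.$$

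First I would establish the per-block identity: for any block $A_i$ with $|A_i|=k_h$,
$$\Delta(A_i)\cup\Delta(A_i,G\setminus A_i)=k_h(G^*).$$
This follows by a direct count. The left-hand side equals $\{x-y:x\in A_i,\ y\in G,\ x\neq y\}$, since the condition $y\in A_i$ together with $x\neq y$ gives the internal part and $y\notin A_i$ gives the external part. For each fixed $x\in A_i$ the set $\{x-y:y\in G\setminus\{x\}\}$ is precisely one copy of $G^*$; summing over the $k_h$ choices of $x$ gives $k_h$ copies of $G^*$. A cardinality check, $k_h(k_h-1)+k_h(n-k_h)=k_h(n-1)$, confirms the multiset sizes agree.

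Next I would sum this identity over the $c_h$ blocks of size $k_h$. Writing $\mathcal{S}_h$ for the sub-collection of size-$k_h$ blocks, this yields
$${\rm Int}(\mathcal{S}_h)\ \cup\ \Big(\bigcup_{i:|A_i|=k_h}\ \bigcup_{j:j\neq i}\Delta(A_i,A_j)\Big)=c_hk_h(G^*),$$
where the second term on the left is exactly the multiset appearing in the PEDF condition for class $h$, and ${\rm Int}(\mathcal{S}_h)=\bigcup_{i:|A_i|=k_h}\Delta(A_i)$ is the internal-difference multiset of the size-$k_h$ blocks. This is the natural generalization of the Int/Ext decomposition of Remark \ref{rem1} to blocks of unequal size. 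From this single equation the equivalence is immediate: the external multiset equals $\lambda_h(G^*)$ if and only if ${\rm Int}(\mathcal{S}_h)$ equals $(c_hk_h-\lambda_h)(G^*)$, i.e. if and only if the size-$k_h$ blocks form an $(n,c_h,k_h,c_hk_h-\lambda_h)$-DDF. Since both the PEDF condition and the DDF conditions are required to hold for every $h$ with $1\leq h\leq\ell$, quantifying over $h$ completes the proof.

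I do not expect a serious obstacle here; the argument is essentially bookkeeping. The one point requiring care is the boundary between size classes: the external differences for class $h$ run to \emph{all} other blocks, not merely to blocks of the same size, so one must verify that the union over $j\neq i$ genuinely reconstitutes $G\setminus A_i$ before the clean per-block identity can be applied. Once that is in hand, the subtraction $c_hk_h(G^*)-\lambda_h(G^*)$ furnishes both directions of the equivalence at once.
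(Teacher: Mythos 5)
Your proof is correct. The paper does not actually supply a proof of this statement (it is quoted from Paterson and Stinson's work, cited as \cite{PatSti}), but your argument is exactly the expected one and matches the Int/Ext decomposition machinery the paper itself uses in Remark \ref{rem1} and Theorems \ref{thm1}--\ref{thm3}: since the blocks partition all of $G$, each block $A_i$ of size $k_h$ satisfies $\Delta(A_i)\cup\bigl(\cup_{j\neq i}\Delta(A_i,A_j)\bigr)=k_h(G^*)$, and summing over the $c_h$ blocks of that size and subtracting gives both directions of the equivalence for each $h$ independently.
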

We have the following new result:

\begin{theorem}
Suppose $A_1,\ldots,A_m$ is a partition of ${G}^*$ (where $|{G}|=n$) such that there are $c_h$ subsets of size $k_h$ for $1 \leq h \leq \ell$. Then $A_1,\ldots,A_m$ is an $(n,m;c_1,\ldots,c_{\ell};k_1,\ldots,k_{\ell};\lambda_1,\ldots,\lambda_{\ell})$-PEDF if and only if the subsets of cardinality $k_h$ form an $(n,c_h,k_h,c_hk_h-\lambda_h-1,c_hk_h-\lambda_h)$-DPDF in G for $1 \leq h \leq \ell$. 
\end{theorem}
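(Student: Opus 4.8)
The plan is to fix a size class $h$, isolate the subcollection $\mathcal{S}_h^{\prime}=\{A_i:|A_i|=k_h\}$ of the $c_h$ sets of size $k_h$, and show that the PEDF condition for class $h$ is equivalent to the asserted DPDF condition on $\mathcal{S}_h^{\prime}$ by comparing, for an arbitrary $g\in G^*$, the multiplicity of $g$ on the two sides. Write $S_h=\bigcup_{i:|A_i|=k_h}A_i$, so $|S_h|=c_hk_h$, and note that since $\{A_1,\ldots,A_m\}$ partitions $G^*$, the complement $G^*\setminus S_h$ is precisely the union of those $A_j$ with $|A_j|\neq k_h$. Throughout I write $m_X(g)$ for the multiplicity of $g$ in a multiset $X$.

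First I would split the external-difference multiset appearing in the PEDF condition according to the size of the second set. For $A_i$ with $|A_i|=k_h$ and $j\neq i$, the multiset $\Delta(A_i,A_j)$ contributes to ${\rm Ext}(\mathcal{S}_h^{\prime})$ when $|A_j|=k_h$, and otherwise has $A_j\subseteq G^*\setminus S_h$; summing the latter over all such $j$ gives $\Delta(S_h,G^*\setminus S_h)$. Hence
$${\rm Ext}_h:=\bigcup_{i:|A_i|=k_h}\ \bigcup_{j:j\neq i}\Delta(A_i,A_j)={\rm Ext}(\mathcal{S}_h^{\prime})\cup\Delta(S_h,G^*\setminus S_h).$$
Applying Remark \ref{rem1} to $\mathcal{S}_h^{\prime}$ and its union $S_h$ also gives ${\rm Int}(\mathcal{S}_h^{\prime})\cup{\rm Ext}(\mathcal{S}_h^{\prime})=\Delta(S_h)$.

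The key step is a counting identity that isolates the role of the excluded identity. Fixing $g\in G^*$, each $x\in S_h$ has a unique partner $y=x-g\in G$, and $y$ lies in exactly one of $S_h$, $G^*\setminus S_h$ or $\{0\}$, the last occurring (once) precisely when $g\in S_h$. Counting the $c_hk_h$ elements $x$ of $S_h$ this way yields
$$m_{\Delta(S_h)}(g)+m_{\Delta(S_h,G^*\setminus S_h)}(g)+\chi_{S_h}(g)=c_hk_h,$$
where $\chi_{S_h}(g)=1$ if $g\in S_h$ and $0$ otherwise. Combining this with the two displayed multiset equations (so that $m_{{\rm Ext}(\mathcal{S}_h^{\prime})}(g)=m_{\Delta(S_h)}(g)-m_{{\rm Int}(\mathcal{S}_h^{\prime})}(g)$) makes the term $m_{\Delta(S_h)}(g)$ cancel and leaves
$$m_{{\rm Ext}_h}(g)=c_hk_h-m_{{\rm Int}(\mathcal{S}_h^{\prime})}(g)-\chi_{S_h}(g).$$

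Finally I would read off the equivalence. The PEDF condition for class $h$ says $m_{{\rm Ext}_h}(g)=\lambda_h$ for all $g\in G^*$, which by the last identity is equivalent to $m_{{\rm Int}(\mathcal{S}_h^{\prime})}(g)=c_hk_h-\lambda_h-\chi_{S_h}(g)$ for all $g$; that is, ${\rm Int}(\mathcal{S}_h^{\prime})$ takes multiplicity $c_hk_h-\lambda_h-1$ on $S_h$ and $c_hk_h-\lambda_h$ on $G^*\setminus S_h$, which is exactly the assertion that $\mathcal{S}_h^{\prime}$ is an $(n,c_h,k_h,c_hk_h-\lambda_h-1,c_hk_h-\lambda_h)$-DPDF (automatically proper, the two values differing by $1$). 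Running this equivalence for each $1\leq h\leq\ell$ proves the theorem. I expect the main obstacle to be the careful bookkeeping of the term $\chi_{S_h}(g)$: it is exactly the contribution of the excluded element $0$, and it is what converts the DDF obtained in the $G$-partition case into the two-valued DPDF obtained here, so keeping track of it (rather than having it vanish, as in the $G$-case) is the crux of the argument.
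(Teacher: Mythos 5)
Your proof is correct. The paper states this theorem without supplying a proof, but your argument is exactly the natural one and is complete: the decomposition of the class-$h$ external multiset into ${\rm Ext}(\mathcal{S}_h^{\prime})\cup\Delta(S_h,G^*\setminus S_h)$ is valid because the $A_j$ of other sizes partition $G^*\setminus S_h$; the counting identity $m_{\Delta(S_h)}(g)+m_{\Delta(S_h,G^*\setminus S_h)}(g)+\chi_{S_h}(g)=c_hk_h$ is the correct bookkeeping of where $x-g$ lands for the $c_hk_h$ elements $x\in S_h$ (with the $\chi_{S_h}(g)$ term accounting for the excluded element $0$, which is precisely what shifts the $S_h$-frequency down by one relative to the $G$-partition case); and combining it with the identity ${\rm Int}(\mathcal{S}_h^{\prime})\cup{\rm Ext}(\mathcal{S}_h^{\prime})=\Delta(S_h)$ yields the stated two-valued DPDF parameters, with the equivalence holding class by class. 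This matches the style of argument the authors use for the neighbouring results (Theorems 2.3 and 2.4) and correctly specialises to the known DDF statement when $\ell=1$ and $S_h=G^*$.
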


These two theorems, taken together, extend known results on GSEDFs which partition $G$ (\cite{PatSti}) and $G^*$ (\cite{WenYanFuFen}): these show that if a family of sets $\{D_1, \ldots, D_m \}$ partitions $G$ (respectively $G^*$), then it is a GSEDF if and only if each $D_i$ is an $(n,k_i,k_i-\lambda_i)$-DS (respectively $(n,k_i,\lambda_i,\mu_i)$-PDS with $\lambda_i=\mu_i-1$).

\subsection{Constructing DPDFs and EPDFs as collections of PDSs}

Various types of EDF-like object are obtainable by taking the sets in the family to be appropriate difference sets or partial difference sets.  The approach of taking a suitable collection of partial difference sets may be used to construct DPDFs, and if their union has appropriate properties, then an EPDF is also obtained.

\begin{theorem}\label{thm6a}
Let $G$ be a group of order $n$ and let $\mathcal{S}^{\prime}=\{D_1, \ldots, D_m\}$ be a collection of disjoint $k$-subsets of $G$ such that each $D_i$ is an $(n,k,\lambda, \mu)$-PDS.  Denote $S= \cup_{i=1}^m D_i$. Then
\begin{itemize}
    \item[(i)] $\mathcal{S}^{\prime}$ is an $(n,m,\lambda+(m-1)\mu,m\mu)$-DPDF;
    \item[(ii)] If $S$ forms an  $(n,mk,\sigma,\chi)$-PDS (or $(n,mk,\sigma)$-DS) then $\mathcal{S}^{\prime}$ is an $(n,m,k,\sigma-(\lambda+(m-1)\mu),\chi-m\mu)$-EPDF (or $(n,m,k,\sigma-(\lambda+(m-1)\mu),\sigma-m\mu)$-EPDF).
\end{itemize}
\end{theorem}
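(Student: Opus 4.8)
The plan is to compute the internal differences directly from the PDS equation satisfied by each block, and then to recover the external differences by complementation via Remark \ref{rem1}.

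For part (i), I would begin with the defining multiset equation for each PDS $D_i$, namely $\Delta(D_i) = \lambda(D_i) + \mu(G^* \setminus D_i)$. The key observation is that, since the blocks are disjoint with union $S$, the complement decomposes as a disjoint union $G^* \setminus D_i = (S \setminus D_i) \cup (G^* \setminus S)$, where $S \setminus D_i = \cup_{j \neq i} D_j$. Substituting and forming the multiset union over all $i$ produces three contributions: $\sum_i \lambda(D_i) = \lambda(S)$, since each element of $S$ lies in exactly one block; $\sum_i \mu(S \setminus D_i) = (m-1)\mu(S)$; and $\sum_i \mu(G^* \setminus S) = m\mu(G^* \setminus S)$. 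Combining these gives ${\rm Int}(\mathcal{S}^{\prime}) = (\lambda + (m-1)\mu)(S) + m\mu(G^* \setminus S)$, which is exactly the claimed DPDF.

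For part (ii), I would invoke Remark \ref{rem1}, which supplies ${\rm Int}(\mathcal{S}^{\prime}) \cup {\rm Ext}(\mathcal{S}^{\prime}) = \Delta(S)$. Under the hypothesis that $S$ is an $(n, mk, \sigma, \chi)$-PDS we have $\Delta(S) = \sigma(S) + \chi(G^* \setminus S)$, so subtracting the expression for ${\rm Int}(\mathcal{S}^{\prime})$ obtained in part (i) yields ${\rm Ext}(\mathcal{S}^{\prime}) = (\sigma - \lambda - (m-1)\mu)(S) + (\chi - m\mu)(G^* \setminus S)$, the asserted EPDF. The difference set case follows immediately by setting $\chi = \sigma$.

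The main point requiring care is the multiplicity bookkeeping in part (i): specifically, verifying that each element of $S$ acquires the coefficient $(m-1)\mu$ from the terms $\mu(S \setminus D_i)$, which relies on each such element belonging to exactly one block and hence being excluded from precisely one of the $m$ complements. There is nothing deep here, but the multiset arithmetic should be carried out element-by-element rather than at the level of sets in order to track these coefficients correctly.
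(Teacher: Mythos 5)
Your proof is correct and takes essentially the same route as the paper: part (i) is the same direct multiset computation (the paper likewise expands each $\Delta(D_i) = \lambda(D_i) + \mu(G^*\setminus D_i)$ and collects the coefficients $\lambda + (m-1)\mu$ on $S$ and $m\mu$ on $G^*\setminus S$), and part (ii) in the paper is deduced by citing Theorem \ref{thm3}, whose content is exactly the complementation via Remark \ref{rem1} that you carry out explicitly. No gaps.
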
 
\begin{proof}
\begin{itemize}
    \item[(i)] By definition, for each $D_i$, we have $\Delta(D_i) = \lambda(D_i) + \mu(\rm{G}^*\backslash\{D_i\})$.
    Hence, 
    \begin{equation*}
        \bigcup\limits_{i=1}^{m}\Delta(D_i) = \Delta(D_1) \cup \ldots \cup \Delta(D_m) = (\lambda(D_1) + \mu(G^*\backslash\{D_1\})) \cup \ldots (\lambda(D_m) + \mu(G^*\backslash\{D_m\}))   
    \end{equation*}
\begin{equation*}
        = \lambda(D_1 \cup \ldots \cup D_m) + (m-1)\mu(D_1 \cup \ldots \cup D_m) + m\mu({G}^*\backslash\{D_1 \cup \ldots \cup D_m\})
    \end{equation*}
\begin{equation*}
        = \lambda(S) + (m-1)\mu(S) + m\mu({G}^*\backslash{S}) = (\lambda + (m-1)\mu)(S) + m\mu({G}\backslash{S}). 
    \end{equation*}
    Hence $\mathcal{S}^{\prime}$ forms an $(n,m,k,\lambda+(m-1)\mu,m\mu)$-DPDF. 
    \item[(ii)] This follows from Theorem \ref{thm3}.
\end{itemize}
\end{proof}

\begin{example}
By Theorem \ref{thm5}, if $H$ is a non-trivial subgroup of a group $G$, then $H$ and $H \setminus \{0\}$ are PDSs.  So, if $H_1, \ldots, H_m$ are disjoint equal-sized subgroups of a group $G$ which pairwise intersect only in $\{0\}$, then taking any collection of sets from $\{ H_1 \setminus \{0\}, \ldots, H_r \setminus \{0\} \}$ gives a DPDF.  
\end{example}

Various examples of this set-up exist in the literature.  In a group $G$ of order $n^2$, a collection of order-$n$ subgroups $H_1, \ldots, H_r$ such that $H_i \cap H_j=\{0\}$ for $i \neq j$ is called an $(n,r)$ partial congruence partition of degree $r$ in $G$ (see \cite{JedLi}).  More generally, for any group $G$, a collection of subgroups $H_1, \ldots, H_r$ of $G$ such that the $H_i \setminus \{0\}$ partition $G^*$ is called a partition of $G$ (see \cite{HucPat}).  The groups which admit a subgroup partition have been characterized in \cite{Zap}.  Subgroup partitions were used in \cite{HucPat} to construct EDFs.

We now focus on objects which are simultaneously DPDFs and EPDFs.

\begin{theorem}\label{partition_SEDF}
Let $\mathcal{S'}=\{D_1, \ldots, D_r\}$ be a family of disjoint $k$-subsets in a group $G$ such that each $D_i$ is an $(n,k,\lambda,\mu)$-PDS.  Let $S = \cup_{i=1}^r D_i$.  If any of the following hold:
\begin{itemize}
\item[(i)] $G \setminus S$ is a difference set;
\item[(ii)] $G \setminus S$ is a proper partial difference set;
\item[(iii)] $G^* \setminus S$ is a proper partial difference set;
\end{itemize}
then $\mathcal{S'}$ is a DPDF which is also an EPDF.
\end{theorem}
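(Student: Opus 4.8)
The plan is to separate the claim into its two halves: that $\mathcal{S'}$ is a DPDF, and that it is also an EPDF. The first half is immediate and holds under all three hypotheses, while the second half is where the three cases genuinely enter, via showing that the union $S$ is itself a difference set or partial difference set.

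First I would dispose of the DPDF conclusion. Since each $D_i$ is an $(n,k,\lambda,\mu)$-PDS, Theorem \ref{thm6a}(i) already guarantees that $\mathcal{S'}$ is an $(n,r,k,\lambda+(r-1)\mu,r\mu)$-DPDF, with no hypothesis on $S$ required. Thus in all three cases $\mathcal{S'}$ is a DPDF, and the work reduces entirely to the EPDF conclusion. For this, I would observe that $\mathcal{S'}$ partitions $S$ and has just been shown to be a DPDF, so by the DPDF/EPDF duality of Theorem \ref{thm3} it suffices to prove that $S$ is a difference set or a partial difference set. The key bookkeeping fact is that, because each $D_i$ lies in $G^*$, we have $0 \notin S$, hence $0 \in G \setminus S$ and $G^* \setminus S = (G \setminus S) \setminus \{0\}$.

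I would then treat the three cases using the complementation results of Theorem \ref{thm5}.
\begin{itemize}
\item In case (i), $G \setminus S$ is a difference set, so by Theorem \ref{thm5}(i) its complement $S = G \setminus (G \setminus S)$ is also a difference set.
\item In case (ii), $G \setminus S$ is a proper PDS, hence symmetric, i.e. $G \setminus S = -(G \setminus S)$, by Theorem \ref{thm5}(ii); applying Theorem \ref{thm5}(iii) with $D = G \setminus S$ then shows its complement $S$ is a PDS.
\item In case (iii), $G^* \setminus S$ is a proper PDS, hence symmetric by Theorem \ref{thm5}(ii); applying Theorem \ref{thm5}(iii) with $D = G^* \setminus S$ and noting that $G \setminus D = S \cup \{0\}$ shows that $(G \setminus D) \setminus \{0\} = S$ is a PDS.
\end{itemize}
In each case $S$ is a DS or a PDS, so Theorem \ref{thm3} yields that $\mathcal{S'}$ is an EPDF, which together with the first half completes the argument.

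The step requiring the most care is the tracking of the identity element: the family lives in $G^*$, whereas the complementation statements of Theorem \ref{thm5}(iii) are phrased over $G$, so the three cases differ precisely in whether $0$ must be adjoined to or deleted from the relevant complement before the theorem can be applied, and this is exactly what separates case (iii) from case (ii). Beyond this, the only structural input is the symmetry $D = -D$ of a proper PDS (Theorem \ref{thm5}(ii)), which is what licenses the use of Theorem \ref{thm5}(iii); note that case (i) needs no symmetry at all, since the complement of a difference set is automatically a difference set.
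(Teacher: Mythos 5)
Your proposal is correct and follows essentially the same route as the paper: the paper's own proof is a one-line appeal to Theorem \ref{thm5} (to see that in all three cases $S$ is a difference set or proper partial difference set, exactly the complementation bookkeeping you spell out) together with Theorem \ref{thm6a}, whose part (ii) is itself deduced from Theorem \ref{thm3}, so your direct use of Theorem \ref{thm3} for the EPDF half is the same argument with one citation unwound. Your case analysis, including the handling of the identity element in case (iii), matches what the paper leaves implicit.
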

\begin{proof}
This follows from Theorem \ref{thm5} and Theorem \ref{thm6a}, since in all cases $S$ is either a difference set or a proper partial difference set.
\end{proof}

\begin{example}
\begin{itemize}
\item[(i)] If $H_1, \ldots, H_m$ are equal-sized non-trivial subgroups which form a group partition of $G$, then taking any collection of $m-1$ sets from $\{ H_1 \setminus \{0\}, \ldots, H_m \setminus \{0\} \}$ gives a DPDF which is also an EPDF.  
\item[(ii)] If $H_1, \ldots, H_m$ form a SEDF which partitions $G^*$, then any collection of $m-1$ of these sets form both an DPDF and EPDF.  The only abelian parameters for SEDF partitioning $G^*$ with $m>2$ are $(243,11,22,20)$ (see \cite{JedLi2}).  Taking any 10 of the 11 sets in the $(243,11,22,20)$-SEDF, we obtain a $(243,10,22,19,20)$-DPDF which is also a $(243,10,22,180)$-EDF.
\end{itemize}
\end{example}
 
\begin{example}\label{Z3xZ3}
Let $G=\mathbb{Z}_3 \times \mathbb{Z}_3$.  Let $A_1=\{(1,1), (2,2)\}$, $A_2=\{(0,1), (0,2)\}$, $A_3=\{(1,2), (2,1)\}$ and $A_4=\{(1,0), (2,0)\}$.  These are all additive subgroups with $(0,0)$ removed. Here $A=\{A_1, A_2, A_3, A_4 \}$ forms a $(9,4,2,6)$-EDF which is a $(9,4,2,1)$-DDF.  Theorem \ref{partition_SEDF} guarantees that any choice of $3$ sets from $A$ will form a DPDF/EPDF.  In fact, by direct checking, any choice $B$ of $i$ sets from $A$, where $1 \leq i \leq 3$, forms a DPDF and EPDF.  Note the union of the sets in $B$ is a PDS which is not a DS. 
\end{example}

\section{Developing a cyclotomic framework}

In the rest of the paper, we will obtain constructions for DPDFs and EPDFs via cyclotomy.  For further background on cyclotomic definitions and results, see \cite{Sto}.    Throughout, $q$ will denote a prime power and will be expressed as $q=ef+1$ where $e,f$ are positive integers greater than 1.  Note that $q$ may be even or odd.  The element $\alpha$ will denote a primitive element of $GF(q)$.  

\begin{definition}\label{def3}
Let GF$(q)$ be a finite field of order $q = ef + 1$, where $q$ is an arbitrary prime power. Let $\alpha$ be a primitive element of GF$(q)$. 
\begin{itemize}
\item[(i)] The \emph{cyclotomic classes} of order $e$ of GF$(q)^*$ are defined: 
\begin{center}
$C_i^e = \alpha^i\langle\alpha^e\rangle$   
\end{center}

for $0 \leq i \leq e-1$. Each cyclotomic class is of size $f$.

\item[(ii)]
For each pair of cyclotomic classes $C_i^e$ and $C_j^e$ of order $e$, we define the \emph{cyclotomic number} $(i,j)_e$ of order $e$ to be the number of solutions to the following equation:
\begin{center}
$z_i + 1 = z_j$
\end{center}
where $z_i \in C_i^e$ and $z_j \in C_j^e$.
\end{itemize}
\end{definition}

The following result from literature directly motivates our approach (see e.g. \cite{DavHucMul},\cite{Wil}).
\begin{theorem}
Let $q=ef+1$ be a prime power.  Then the set of all cyclotomic classes $\{C_0^e, \ldots, C_{e-1}^e\}$ is a $(q,e,f, f-1)$-DDF and $(q,e,f,q-1-f)$-EDF.
\end{theorem}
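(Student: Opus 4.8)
The plan is to prove the two claims together, exploiting the fact that here $S=\bigcup_{i=0}^{e-1}C_i^e=GF(q)^*$, so that Remark \ref{rem1} applies with $S=G^*$ and gives ${\rm Int}(\mathcal{S}^{\prime})\cup{\rm Ext}(\mathcal{S}^{\prime})=\Delta(GF(q)^*)$. My strategy is to compute the internal differences directly (the DDF claim), then read off the external differences as the complementary multiset inside $\Delta(GF(q)^*)$ (the EDF claim).

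First I would establish the DDF claim. Fix $d\in GF(q)^*$ and let $N(d)$ denote its multiplicity in $\bigcup_i\Delta(C_i^e)$. Two nonzero elements $x,y$ lie in a common class $C_i^e$ exactly when $x/y\in C_0^e=\langle\alpha^e\rangle$. Writing $x=y+d$, the membership condition $x/y\in C_0^e$ becomes $1+d/y\in C_0^e$; substituting $t=d/y$ sets up a bijection between the valid ordered pairs $(x,y)$ counted by $N(d)$ and the elements $t\in GF(q)^*$ with $1+t\in C_0^e$. Putting $u=1+t$, this counts the $u\in C_0^e$ with $u\neq 1$, of which there are exactly $|C_0^e|-1=f-1$ (noting $1\in C_0^e$). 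The key point is that $N(d)=f-1$ is independent of $d$, so
$$\bigcup_{i=0}^{e-1}\Delta(C_i^e)=(f-1)(GF(q)^*),$$
yielding the $(q,e,f,f-1)$-DDF.

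For the EDF claim I would compute $\Delta(GF(q)^*)$ first: for fixed $d\neq 0$, the solutions of $x-y=d$ with $x,y\in GF(q)^*$ are parameterised by $y\notin\{0,-d\}$, giving $q-2$ ordered pairs, hence $\Delta(GF(q)^*)=(q-2)(GF(q)^*)$. Subtracting the internal multiplicities via Remark \ref{rem1} then gives
$${\rm Ext}(\mathcal{S}^{\prime})=(q-2)(GF(q)^*)-(f-1)(GF(q)^*)=(q-1-f)(GF(q)^*),$$
which is the $(q,e,f,q-1-f)$-EDF.

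The only real subtlety is the independence of $N(d)$ from $d$ in the internal count; the substitution argument handles this, but a conceptually cleaner alternative is a symmetry argument I would keep in reserve: since $\alpha\cdot C_i^e=C_{i+1}^e$ and $\Delta(\alpha C)=\alpha\,\Delta(C)$, the multiset $\bigcup_i\Delta(C_i^e)$ is fixed by multiplication by $\alpha$, and because $\langle\alpha\rangle=GF(q)^*$ acts transitively on $GF(q)^*$, its multiplicity function must be constant; a total count of $e\cdot f(f-1)=(q-1)(f-1)$ internal differences then forces the common value $f-1$. Either route settles the DDF claim, after which the EDF claim is immediate. I expect the step demanding the most care to be verifying that the substitution $t=d/y$, $u=1+t$ is a genuine bijection, i.e. that all the nonvanishing constraints ($y\neq 0$, $x\neq 0$, $x\neq y$, $u\neq 1$) line up correctly; this is routine but is where an error would most easily creep in.
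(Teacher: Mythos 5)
Your argument is correct. One point of comparison worth noting: the paper does not actually prove this theorem --- it quotes it from the literature (Wilson; Davis--Huczynska--Mullen) as motivation --- but the machinery it builds immediately afterwards (Lemma \ref{lem1CD}) encodes exactly the structure your proof uses: there $\Delta(C_0^e)$ is decomposed into the $f-1$ transversals $T_r=(\alpha^{re}-1)C_0^e$, and the DDF statement follows by summing $\Delta(C_j^e)=\alpha^j\Delta(C_0^e)$ over $j$, since $\bigcup_{j=0}^{e-1}\alpha^j T_r=(\alpha^{re}-1)\,GF(q)^*=GF(q)^*$ for each $r$. Your primary route (fixing $d$ and counting solutions of $x-y=d$ with $x/y\in C_0^e$ via $t=d/y$, $u=1+t$) is a pointwise version of the same count, and the bookkeeping of the nonvanishing constraints is handled correctly: $u\in C_0^e$ forces $u\neq 0$, and $u\neq 1$ corresponds precisely to $t\neq 0$, giving $f-1$ independently of $d$. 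Your ``reserve'' symmetry argument is essentially the transversal viewpoint in disguise ($\langle\alpha\rangle$ acts transitively and fixes the multiset). The deduction of the EDF parameter from $\Delta(GF(q)^*)=(q-2)(GF(q)^*)$ together with Remark \ref{rem1} is exactly how the paper's framework would obtain it. Nothing is missing.
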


We give the following useful relationship between cyclotomic classes:

\begin{lemma}\label{union}
Let $q=ef+1$. If $\epsilon|e$, then for $0 \leq j \leq \epsilon-1$, $$C_j^{\epsilon}=\bigcup_{i=0}^{\frac{e}{\epsilon}-1} C_{i \epsilon +j}^e.$$
In particular $C_0^{\epsilon}$ is partitioned by the cosets $\{C_{i \epsilon}^e: 0 \leq i \leq \frac{\epsilon}{e}-1 \}$ of $C_0^e$.
\end{lemma}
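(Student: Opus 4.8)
The plan is to reduce everything to the subgroup structure of the cyclic group $GF(q)^*$, which is generated by $\alpha$. Writing $H_e = \langle \alpha^e \rangle$ and $H_\epsilon = \langle \alpha^\epsilon \rangle$, the classes in question are the cosets $C_i^e = \alpha^i H_e$ and $C_j^\epsilon = \alpha^j H_\epsilon$. Since $q - 1 = ef$ and $e \mid q-1$, the subgroup $H_e$ has order $f$, while $H_\epsilon$ has order $(q-1)/\epsilon = ef/\epsilon$. The hypothesis $\epsilon \mid e$ gives $\alpha^e = (\alpha^\epsilon)^{e/\epsilon} \in H_\epsilon$, hence $H_e \leq H_\epsilon$, and the index is $[H_\epsilon : H_e] = (ef/\epsilon)/f = e/\epsilon$.

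First I would prove the ``in particular'' statement, namely that $H_\epsilon = C_0^\epsilon$ is partitioned by the $e/\epsilon$ sets $C_{i\epsilon}^e = \alpha^{i\epsilon} H_e$ for $0 \leq i \leq e/\epsilon - 1$. Each $\alpha^{i\epsilon}$ lies in $H_\epsilon$, so each $C_{i\epsilon}^e$ is a coset of $H_e$ contained in $H_\epsilon$. These cosets are pairwise distinct: $\alpha^{i\epsilon} H_e = \alpha^{i'\epsilon} H_e$ forces $\alpha^{(i - i')\epsilon} \in H_e = \langle \alpha^e \rangle$, that is $e \mid (i - i')\epsilon$, equivalently $(e/\epsilon) \mid (i - i')$; for $i, i' \in \{0, \ldots, e/\epsilon - 1\}$ this holds only when $i = i'$. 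Thus we exhibit $e/\epsilon$ distinct cosets of $H_e$ inside $H_\epsilon$, which by the index computation is the full set of cosets; since distinct cosets are disjoint and their union is $H_\epsilon$, they partition $H_\epsilon = C_0^\epsilon$.

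Finally I would obtain the general statement by translation. Multiplying the identity $C_0^\epsilon = \bigcup_{i=0}^{e/\epsilon - 1} C_{i\epsilon}^e$ through by $\alpha^j$ gives $C_j^\epsilon = \alpha^j H_\epsilon = \bigcup_{i=0}^{e/\epsilon-1} \alpha^{j + i\epsilon} H_e = \bigcup_{i=0}^{e/\epsilon - 1} C_{i\epsilon + j}^e$, and multiplication by the unit $\alpha^j$ preserves disjointness, so the union remains a partition. The only point requiring care --- and the nearest thing to an obstacle --- is the distinctness and counting argument showing that the chosen cosets $C_{i\epsilon}^e$ are exactly the cosets of $H_e$ in $H_\epsilon$; once the index $e/\epsilon$ is matched against the $e/\epsilon$ distinct representatives, the rest is routine. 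An alternative I would keep in reserve is the purely arithmetic version: an element $\alpha^n$ lies in $C_j^\epsilon$ if and only if $n \equiv j \pmod{\epsilon}$, and the residues modulo $e$ that reduce to $j$ modulo $\epsilon$ are precisely $\{j, j+\epsilon, \ldots, j + (e/\epsilon - 1)\epsilon\}$, giving the same decomposition.
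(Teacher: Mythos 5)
Your proof is correct and follows essentially the same route as the paper's: identify $C_0^e$ as a subgroup of index $e/\epsilon$ in $C_0^{\epsilon}$, conclude that its cosets $C_{i\epsilon}^e$ partition $C_0^{\epsilon}$, and then translate by $\alpha^j$ to get the general case. You merely fill in the detail (distinctness of the coset representatives $\alpha^{i\epsilon}$) that the paper dismisses as clear.
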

\begin{proof} It is clear that $C_0^e$ is a subgroup of index $\frac{e}{\epsilon}$ in $C_0^{\epsilon}$.  Hence its cosets in $C_0^{\epsilon}$ partition $C_0^{\epsilon}$, i.e. $C_0^{\epsilon}=\bigcup_{i=0}^{\frac{e}{\epsilon}-1} C_{i \epsilon}^e$.
Then for $0 \leq j \leq \epsilon-1$, $C_j^{\epsilon}=\alpha^j(\bigcup_{i=0}^{\frac{e}{\epsilon}-1} C_{i \epsilon}^e)=\bigcup_{i=0}^{\frac{e}{\epsilon}-1} C_{i \epsilon +j}^e$.
\end{proof}

In what follows, we will often consider the situation when the prime power $q=ef+1$ is also expressible as $q=\epsilon \rho+1$ where $\epsilon|e$ ($\epsilon,\rho>1$).  For many DPDF and EPDF constructions, we will be interested in the case when $e>\epsilon$, but our results will hold for $e=\epsilon$, and this is useful in analysing the PDS situation.  

We will denote the family of sets ${C_0^{\epsilon}}^{\prime}:=\{C_{i \epsilon}^e: 0 \leq i \leq \frac{\epsilon}{e}-1 \}$.  Here, the sets of ${C_0^{\epsilon}}^{\prime}$ are certain cyclotomic classes of order $e$ which partition $C_0^{\epsilon}$, where $C_0^{\epsilon}$ is a cyclotomic class of order $\epsilon$ (of larger cardinality if $e>\epsilon$), where $\epsilon$ is a divisor of $e$.  Note that any coset $\alpha^{i \epsilon} C_0^e$ ($i \in \mathbb{Z}$) represents a set in ${C_0^{\epsilon}}^{\prime}$.

\subsection{Internal differences: PDSs and DPDFs}

We introduce the following notation and results.
\begin{definition}\label{def6CD}
Let $q=ef+1$ and let $\alpha$ be a primitive element of $GF(q)$.
\begin{itemize}
\item[(i)] For each $1 \leq r \leq f-1$,  define 
$$ T_r := \{ \alpha^{ne}-\alpha^{me}:n-m=r \mod f, \, 0 \leq n\neq{m} \leq f-1 \}.$$ \\  
Clearly, $T_r \subseteq \Delta(C_0^e)$; we refer to the set $T_r$ as a \emph{transversal} of $\Delta(C_0^e)$. Note $|T_r|=f$.
\item[(ii)]  For each $1 \leq r \leq f-1$, let $a_r \in \{0,...,e-1\}$ be such that $\alpha^{re}-1 \in C_{a_r}^e = \alpha^{a_r}C_0^e$.
\end{itemize}
\end{definition}

This result summarizes some cyclotomic relationships using our notation.
\begin{lemma}\label{lem1CD}
\begin{itemize}
\item[(i)] For $1 \leq r \leq f-1$, each transversal $T_r = (\alpha^{re} - 1)C_0^e$ is a copy of the cyclotomic class $C_{a_r}^e$ where $a_r \in \{0,...,e-1\}$.

\item[(ii)] $\Delta(C_0^e) = \bigcup\limits^{f-1}_{r=1}{T_r} = \bigcup\limits_{r=1}^{f-1}C_{a_r}^e = \bigcup\limits_{i=0}^{e-1}(i,0)_e(C_i^e)$

\item[(iii)] For $0 \leq j \leq e-1$, $\Delta(C_j^e) = \alpha^j\Delta(C_0^e) = \bigcup\limits^{f-1}_{r=1}\alpha^jT_r = \bigcup\limits^{f-1}_{r=1}\alpha^jC_{a_r}^e = \bigcup\limits_{i=0}^{e-1}(i,0)_e(\alpha^jC_i^e)$.
\end{itemize}
\end{lemma}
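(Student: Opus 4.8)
The plan is to establish (i) by a single factorization, then bootstrap to (ii) by partitioning the difference multiset according to residues modulo $f$ and converting the resulting multiplicities into cyclotomic numbers, and finally to read off (iii) by scaling. The key fact underlying everything is that $\alpha^e$ has order $f$ in $GF(q)^*$, since $\alpha^{ef}=\alpha^{q-1}=1$; so $C_0^e=\langle\alpha^e\rangle=\{1,\alpha^e,\ldots,\alpha^{(f-1)e}\}$ is a subgroup of order $f$, and $\alpha^{se}$ depends only on $s \bmod f$.

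For (i), I would take a typical element $\alpha^{ne}-\alpha^{me}$ of $T_r$, where $n-m\equiv r \pmod f$, and factor it as $\alpha^{me}\bigl(\alpha^{(n-m)e}-1\bigr)=\alpha^{me}(\alpha^{re}-1)$, using that $\alpha^{(n-m)e}=\alpha^{re}$. As $m$ runs through $0,\ldots,f-1$ the factor $\alpha^{me}$ runs through all of $C_0^e$, so $T_r=(\alpha^{re}-1)C_0^e$. The constraint $1\leq r\leq f-1$ guarantees $\alpha^{re}\neq1$, hence $\alpha^{re}-1\neq0$; writing $\alpha^{re}-1\in C_{a_r}^e=\alpha^{a_r}\langle\alpha^e\rangle$ and using that $C_0^e$ is a subgroup then gives $(\alpha^{re}-1)C_0^e=\alpha^{a_r}C_0^e=C_{a_r}^e$, which is the claim.

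For (ii), I would partition the pairs $(n,m)$ with $0\leq n\neq m\leq f-1$ according to the value $r:=(n-m)\bmod f\in\{1,\ldots,f-1\}$ (the value $r=0$ is excluded precisely because $n\neq m$). This exhibits $\Delta(C_0^e)=\bigcup_{r=1}^{f-1}T_r$ as a multiset, and (i) identifies each $T_r$ with $C_{a_r}^e$. The remaining equality $\bigcup_{r=1}^{f-1}C_{a_r}^e=\bigcup_{i=0}^{e-1}(i,0)_e(C_i^e)$ reduces to showing that the number of indices $r\in\{1,\ldots,f-1\}$ with $a_r=i$ equals $(i,0)_e$. Here I would use that $r\mapsto\alpha^{re}$ is a bijection from $\{1,\ldots,f-1\}$ onto $C_0^e\setminus\{1\}$, so this count equals $|\{c\in C_0^e\setminus\{1\}: c-1\in C_i^e\}|$; since $c-1\in C_i^e$ forces $c-1\neq0$, the restriction $c\neq1$ is automatic, and setting $z_0=c$, $z_i=c-1$ recasts the condition as $z_i+1=z_0$ with $z_i\in C_i^e$, $z_0\in C_0^e$, which is exactly the defining equation of $(i,0)_e$.

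Finally, (iii) follows because multiplication by $\alpha^j$ is a bijection of $GF(q)^*$: from $C_j^e=\alpha^jC_0^e$ one gets $\Delta(C_j^e)=\alpha^j\Delta(C_0^e)$ as multisets, and substituting the chain of equalities from (i) and (ii) yields each of the stated forms (with $\alpha^jC_i^e=C_{i+j}^e$, indices mod $e$). I expect the main obstacle to be the multiplicity bookkeeping in (ii): all unions must be read as multiset unions, and one must verify that the bijection $r\mapsto\alpha^{re}$ transfers these counts faithfully to the cyclotomic-number formulation. That identification is the crux of the argument, whereas the factorization in (i) and the scaling in (iii) are routine once the order of $\alpha^e$ is noted.
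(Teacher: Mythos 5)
Your proposal is correct and follows essentially the same route as the paper's own argument: factor $\alpha^{ne}-\alpha^{me}=\alpha^{me}(\alpha^{re}-1)$ to get $T_r=(\alpha^{re}-1)C_0^e$, partition $\Delta(C_0^e)$ into the $T_r$ by the residue of $n-m$ modulo $f$, and convert the count of $r$ with $a_r=i$ into the cyclotomic number $(i,0)_e$ via the correspondence $z_i+1=z_0$ with $z_0=\alpha^{re}\in C_0^e$. Your explicit remarks about the bijection $r\mapsto\alpha^{re}$ onto $C_0^e\setminus\{1\}$ and the harmlessness of excluding $c=1$ are details the paper leaves implicit, but the substance is identical.
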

\iffalse
\begin{itemize}
\item[(i)] For each $1 \leq r \leq f-1$,
\begin{equation*}
T_r=\{ (\alpha^{(m+r)e}- \alpha^{me}: 0 \leq m \leq f-1\}
= \{ \alpha^{me}(\alpha^{re}- 1): 0 \leq m \leq f-1\} 
= (\alpha^{re}-1)C_0^e.
\end{equation*}

\item[(ii)] The multiset $\Delta(C_0^e)$ consists of all elements of the form $\alpha^{ne}-\alpha^{me}$ where $0 \leq n\neq{m} \leq f-1$ and hence is a disjoint union of the $T_r$ ($1 \leq r \leq f-1$). The second equality follows by part (i).  Finally, for $i \in \{0,...,e-1\}$,  $C_{a_r}^e = C_i^e$ if and only if $\alpha^{re}-1 \in C_i^e$. Since $\alpha^{re} \in C_0^e$, then by Definition \ref{def3}, there are $(i,0)_e$ values of $r$ for which $C_{a_r}^e = C_i^e$, hence there are $(i,0)_e$ copies of $C_i^e$ in the multiset $\Delta(C_0^e)$.

\item[(iii)] For $0 \leq j \leq e-1$, the cyclotomic class $C_j^e = \alpha^j\langle\alpha^e\rangle = \alpha^j(C_0^e)$. Each element of the multiset $\Delta(C_j^e)$ will take the form $\alpha^j(\alpha^{ne}) - \alpha^j(\alpha^{me}) = \alpha^j(\alpha^{ne}-\alpha^{me})$, i.e. $\Delta(C_j^e)=\alpha^j\Delta(C_0^e)$.  The other equalities follow by part (ii).
\end{itemize}
\end{proof}
\fi

From these, together with Theorem \ref{thm6a}, we immediately have:
\begin{lemma}\label{PDS/DPDFConditions}
Let $q=ef+1$ be a prime power.
\begin{itemize}
    \item[(i)] For $0 \leq i \leq e-1$, each $C_i^e$ is a $(q,\frac{q-1}{e},A,B)$-PDS if and only if there exist $A,B$ such that $A=(0,0)_e$ and $B=(i,0)_e$ for all $1 \leq i \leq e-1$.  The PDS is proper precisely when $A \neq B$.
    \item[(ii)] Suppose there exist $A,B$ such that $A=(0,0)_e$ and $B=(i,0)_e$ for all $1 \leq i \leq e-1$.  Let $I \subset \{0,1,\ldots,e-1\}$ ($|I|=u$) and $
    \mathcal{D}^{\prime} = \{C_i^e\}_{i \in I}$.  Then $\mathcal{D}^{\prime}$ is a $(q,u,\frac{q-1}{e},A+(u-1)B,uB)$-DPDF, which is proper precisely when $A \neq B$.
\end{itemize}
\end{lemma}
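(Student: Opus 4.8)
The plan is to read off both parts directly from the multiset identities in Lemma~\ref{lem1CD} together with Theorem~\ref{thm6a}, since the PDS and DPDF defining equations are themselves multiset equations and our cyclotomic expansion already writes $\Delta(C_j^e)$ as an explicit nonnegative combination of cyclotomic classes.

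For part (i), I would begin with the expression $\Delta(C_j^e)=\bigcup_{i=0}^{e-1}(i,0)_e(\alpha^j C_i^e)$ from Lemma~\ref{lem1CD}(iii) and use $\alpha^j C_i^e=C_{i+j}^e$ (indices taken mod $e$) to rewrite it as $\Delta(C_j^e)=\bigcup_{i=0}^{e-1}(i,0)_e(C_{i+j}^e)$. Comparing this with the PDS multiset equation $\Delta(C_j^e)=A(C_j^e)+B(G^*\setminus C_j^e)$, the term with $i=0$ gives the multiplicity on $C_j^e$ itself, forcing $A=(0,0)_e$, while the terms with $i\neq 0$ give the multiplicities on the remaining classes, forcing $(i,0)_e=B$ for every $1\le i\le e-1$. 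Since this condition does not involve $j$, it holds for one class exactly when it holds for all of them, which yields the stated equivalence; the properness clause then follows immediately, since a PDS is proper precisely when $A\neq B$ by Definition~\ref{def_basic}(ii).

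For part (ii), I would observe that distinct cyclotomic classes of order $e$ are disjoint and all have cardinality $f=\frac{q-1}{e}$, and that by part (i) each is a $(q,f,A,B)$-PDS under the stated hypothesis. The family $\mathcal{D}^{\prime}=\{C_i^e\}_{i\in I}$ with $|I|=u$ is therefore a collection of $u$ disjoint $f$-subsets, each an $(q,f,A,B)$-PDS, so Theorem~\ref{thm6a}(i) applies with $m=u$, $k=f$, $\lambda=A$, $\mu=B$ and delivers a $(q,u,f,A+(u-1)B,uB)$-DPDF exactly as claimed. Properness of this DPDF requires $A+(u-1)B\neq uB$, which simplifies to $A\neq B$, matching the properness condition in part (i).

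Since both parts reduce to matching coefficients in known multiset identities and invoking an already-proved theorem, I do not anticipate a genuine obstacle. The only point requiring a little care is the index bookkeeping in the rewriting $\alpha^j C_i^e=C_{i+j}^e$ together with the observation that the resulting PDS condition is independent of $j$; it is precisely this independence that licenses the uniform quantification over all $C_i^e$ in the statement.
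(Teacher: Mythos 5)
Your proposal is correct and follows exactly the route the paper intends: the paper derives this lemma ``immediately'' from Lemma~\ref{lem1CD} together with Theorem~\ref{thm6a}, and your argument simply writes out the details --- matching coefficients in $\Delta(C_j^e)=\bigcup_{i=0}^{e-1}(i,0)_e(C_{i+j}^e)$ against the PDS equation for part (i), and invoking Theorem~\ref{thm6a}(i) with $m=u$, $\lambda=A$, $\mu=B$ for part (ii). The properness observations and the remark that the condition is independent of $j$ are also exactly what is needed, so there is nothing to add.
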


We make a definition whose motivation will be clear from the subsequent proposition.

\begin{definition}\label{def6CD}
For $1 \leq r \leq f-1$, we define

$$D_r := \bigcup\limits_{i=0}^{e/\epsilon-1} \alpha^{i\epsilon} T_r$$
We will refer to this as a \textit{diagonal of a transversal}.
\end{definition}

\begin{proposition}\label{prop1CD}
\begin{itemize}
\item[(i)] ${\rm Int}({C_0^{\epsilon}}^{\prime}) = \bigcup\limits_{r=1}^{f-1} D_r$.\\
For each $1 \leq r \leq f-1$, 
\begin{itemize}
\item[(ii)] $D_r = (\alpha^{re} - 1)C_0^{\epsilon}$;
\item[(iii)]  $D_r = \alpha^i C_0^{\epsilon}= C_i^{\epsilon}$, where $\alpha^{re}-1 \in C_i^{\epsilon}$ ($0 \leq i \leq \epsilon-1$).
\end{itemize}
\end{itemize}
\end{proposition}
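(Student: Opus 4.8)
The plan is to unpack the three claims directly from the definitions, exploiting the multiplicative structure of the cyclotomic classes established in Lemma~\ref{lem1CD}. Recall from Definition~\ref{def6CD} that $D_r = \bigcup_{i=0}^{e/\epsilon-1} \alpha^{i\epsilon} T_r$, and from Lemma~\ref{lem1CD}(i) that each transversal satisfies $T_r = (\alpha^{re}-1)C_0^e$. The key algebraic fact I would isolate first is Lemma~\ref{union}, which tells us $C_0^{\epsilon} = \bigcup_{i=0}^{e/\epsilon-1} C_{i\epsilon}^e = \bigcup_{i=0}^{e/\epsilon-1} \alpha^{i\epsilon} C_0^e$.

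For part~(ii), I would start from the definition of $D_r$ and substitute $T_r = (\alpha^{re}-1)C_0^e$, giving
$$D_r = \bigcup_{i=0}^{e/\epsilon-1} \alpha^{i\epsilon}(\alpha^{re}-1)C_0^e = (\alpha^{re}-1)\bigcup_{i=0}^{e/\epsilon-1}\alpha^{i\epsilon}C_0^e,$$
where the scalar $(\alpha^{re}-1)$ can be pulled outside the union since multiplication by a fixed field element is a bijection commuting with the coset structure. The remaining union is exactly $C_0^{\epsilon}$ by Lemma~\ref{union}, yielding $D_r = (\alpha^{re}-1)C_0^{\epsilon}$. For part~(iii), I would then observe that since $C_0^{\epsilon}$ is the multiplicative subgroup $\langle \alpha^{\epsilon}\rangle$, multiplying it by any field element $\alpha^{re}-1$ lying in $C_i^{\epsilon} = \alpha^i C_0^{\epsilon}$ produces $\alpha^i C_0^{\epsilon} = C_i^{\epsilon}$; this is just the standard fact that a coset times a subgroup equals the coset, i.e.\ $\gamma\langle\alpha^{\epsilon}\rangle$ depends only on the coset of $\gamma$.

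For part~(i), the cleanest route is to combine Proposition's earlier building blocks with the diagonal definition. I would write ${\rm Int}({C_0^{\epsilon}}^{\prime}) = \bigcup_{i=0}^{e/\epsilon-1}\Delta(C_{i\epsilon}^e)$ by Definition~\ref{def1}(i) applied to the family ${C_0^{\epsilon}}^{\prime}=\{C_{i\epsilon}^e\}$, then invoke Lemma~\ref{lem1CD}(iii) to expand each $\Delta(C_{i\epsilon}^e) = \alpha^{i\epsilon}\Delta(C_0^e) = \bigcup_{r=1}^{f-1}\alpha^{i\epsilon}T_r$. Swapping the order of the two unions over $i$ and $r$ then regroups the terms precisely into $\bigcup_{r=1}^{f-1}\bigl(\bigcup_{i=0}^{e/\epsilon-1}\alpha^{i\epsilon}T_r\bigr) = \bigcup_{r=1}^{f-1} D_r$.

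The main subtlety, and the one point I would be careful about, is that these are \emph{multiset} equalities rather than set equalities, so the rearrangements of unions must be understood multiset-additively and the scalar-multiplication bijections must be checked to preserve multiplicities. In particular, pulling $(\alpha^{re}-1)$ outside the union in part~(ii) is valid only because multiplication by a nonzero element permutes $GF(q)^*$ bijectively, hence sends a multiset union to the corresponding multiset union without collision or loss of copies; I would state this explicitly. Everything else is routine once Lemma~\ref{union} and Lemma~\ref{lem1CD} are in hand, so I do not anticipate a genuine obstacle—the work is entirely in bookkeeping the indices and confirming the multiset interpretation is respected at each step.
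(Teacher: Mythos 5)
Your proposal is correct and follows essentially the same route as the paper's own argument: part (ii) by substituting $T_r=(\alpha^{re}-1)C_0^e$ and factoring the scalar out of the union $\bigcup_i \alpha^{i\epsilon}C_0^e=C_0^{\epsilon}$, part (i) by expanding ${\rm Int}({C_0^{\epsilon}}^{\prime})$ via Lemma~\ref{lem1CD}(iii) and interchanging the two unions, and part (iii) as an immediate coset observation. Your explicit remark that multiplication by a nonzero field element preserves multiset multiplicities is a sensible extra precaution but does not change the argument.
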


\iffalse
\begin{proof}
(i)
Using the definitions and Lemma \ref{lem1CD}(iii),
$$
{\rm Int}({C_0^{\epsilon}}^{\prime}) = \bigcup\limits_{i=0}^{e/\epsilon-1}\Delta(C_{i \epsilon}^e). 
 = \bigcup\limits_{i=0}^{e/\epsilon-1}\bigcup\limits_{p=1}^{f-1}\alpha^{i \epsilon}T_p = \bigcup\limits_{p=1}^{f-1}  \bigcup\limits_{i=0}^{e/\epsilon-1} \alpha^{i\epsilon}T_p =  \bigcup\limits_{p=1}^{f-1} D_p.
$$
For (ii), by Lemma \ref{lem1CD}(i), $T_p = (\alpha^{pe}-1)C_0^e$.  Hence;
\begin{equation*}
D_p = \bigcup\limits_{i=0}^{e/{\epsilon}-1} \alpha^{i\epsilon}(\alpha^{pe}-1)C_0^e = (\alpha^{pe}-1) \bigcup\limits_{i=0}^{e/\epsilon-1} \alpha^{i\epsilon}C_0^e = (\alpha^{pe}-1)C_0^{\epsilon}.  
\end{equation*}
Part (iii) is immediate.
\end{proof}
\fi

Note that, by (ii) of the above result, our diagonals of transversals of $\Delta(C_0^e)$ are themselves transversals of $\Delta(C_0^{\epsilon})$.

\begin{definition}\label{defg1CD}
Let $q = \epsilon \rho + 1=ef + 1$ where $\epsilon\mid{e}$. Let $\alpha$ be a fixed primitive element of GF$(q)$ and let $C_0^{\epsilon}=\langle \alpha^\epsilon \rangle$. We define, for $0 \leq i \leq \epsilon-1$,
$$\Phi_i := \{x \in C_0^e : x\neq{1}, x-1 \in \alpha^i C_0^{\epsilon}\} \mbox{ and }\phi_i := |\Phi_i|.$$
\end{definition}

The following result establishes the relationship between $\phi_0,\phi_1,\ldots,\phi_{\epsilon-1}$ and cyclotomic numbers.  The quantities $\phi_j$ are hard to evaluate in the general case. 

\begin{proposition}\label{thmg4CD}
Let $q = \epsilon\rho + 1 = ef + 1$ such that $\epsilon \mid e$, then for $0 \leq j \leq \epsilon-1$,
\begin{equation*}
\phi_j = \sum\limits_{i=0}^{e/\epsilon-1}(\epsilon{i}+j,0)_e.
\end{equation*}
\end{proposition}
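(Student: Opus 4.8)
The plan is to express $\phi_j$ as a count that splits along the partition of $C_j^{\epsilon}$ into cyclotomic classes of order $e$, and then to recognise each piece directly as a cyclotomic number. First I would recall from Lemma \ref{union} that $C_j^{\epsilon}=\bigcup_{i=0}^{e/\epsilon-1} C_{i\epsilon+j}^e$, a disjoint union of cyclotomic classes of order $e$. Since $\alpha^j C_0^{\epsilon}=C_j^{\epsilon}$, the set $\Phi_j$ consists exactly of those $x\in C_0^e$ with $x\neq 1$ for which $x-1$ lies in this union. Because the classes $C_{i\epsilon+j}^e$ are pairwise disjoint, the element $x-1$ lies in at most one of them, so the count splits additively:
$$\phi_j = \sum_{i=0}^{e/\epsilon-1}\bigl|\{x\in C_0^e : x\neq 1,\ x-1\in C_{i\epsilon+j}^e\}\bigr|.$$

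Next I would identify each summand with a cyclotomic number. The restriction $x\neq 1$ is automatic in every term, since $x=1$ would force $x-1=0$, which lies in no cyclotomic class $C_k^e$; hence the $i$-th summand equals the number of $x\in C_0^e$ with $x-1\in C_{i\epsilon+j}^e$. Setting $z:=x-1\in C_{i\epsilon+j}^e$ and $x=z+1\in C_0^e$, this is precisely the number of solutions of $z+1=x$ with $z\in C_{i\epsilon+j}^e$ and $x\in C_0^e$, which by Definition \ref{def3}(ii) is the cyclotomic number $(i\epsilon+j,0)_e$. Substituting then yields $\phi_j=\sum_{i=0}^{e/\epsilon-1}(\epsilon i+j,0)_e$, as claimed.

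The argument is essentially bookkeeping once Lemma \ref{union} is in hand, so I do not expect a genuine obstacle. The two points that require care are matching the orientation of the cyclotomic-number convention — namely that $(a,0)_e$ counts those $x\in C_0^e$ with $x-1\in C_a^e$, and not the reverse inclusion — and confirming that dropping the harmless condition $x\neq 1$ leaves every count unchanged. Getting the index $a=\epsilon i+j$ to align with the summation range $0\le i\le e/\epsilon-1$ is the only place where a sign or offset slip could creep in, and I would double-check it against the $e=\epsilon$ degenerate case, where the sum collapses to the single term $(j,0)_e$ and $\phi_j$ should recover the usual cyclotomic count.
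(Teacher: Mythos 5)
Your proof is correct: decomposing $C_j^{\epsilon}=\alpha^j C_0^{\epsilon}$ via Lemma \ref{union} into the disjoint classes $C_{i\epsilon+j}^e$, identifying each summand with $(i\epsilon+j,0)_e$ under the paper's convention (which Lemma \ref{lem1CD}(ii) confirms is oriented exactly as you state), and noting that $x\neq 1$ is vacuous since $x-1=0$ lies in no cyclotomic class, is precisely the intended argument. The paper omits the proof of this proposition, but your route is the natural one its framework supplies, and your sanity check at $e=\epsilon$ is consistent.
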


We have the following conditions for DDFs and DPDFs.
\begin{theorem}\label{prop2CD}
\begin{itemize}
\item[(i)] If $\phi_1 = \ldots = \phi_{\epsilon-1}$, then $\rm{Int}({C_0^{\epsilon}}^{\prime}) = \phi_0(C_0^{\epsilon}) + \phi_1(\rm{G}^*\backslash{C_0^{\epsilon}})$, i.e. ${C_0^{\epsilon}}^{\prime}$ is a DPDF (or PDS when $\epsilon=e$).
\item[(ii)] If $\phi_0 = \phi_1 = \ldots = \phi_{\epsilon-1}$, then $\rm{Int}({C_0^{\epsilon}}^{\prime}) = \phi_0(\rm{G}^*)$, i.e. ${C_0^{\epsilon}}^{\prime}$ is a DDF (or DS when $\epsilon=e$).
\end{itemize}
\end{theorem}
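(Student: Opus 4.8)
The plan is to express $\mathrm{Int}({C_0^{\epsilon}}^{\prime})$ explicitly as a multiset of cyclotomic classes of order $\epsilon$, and then read off the coefficients from the $\phi_j$. By Proposition \ref{prop1CD}(i), we have $\mathrm{Int}({C_0^{\epsilon}}^{\prime}) = \bigcup_{r=1}^{f-1} D_r$, and by Proposition \ref{prop1CD}(iii) each diagonal $D_r$ is exactly the cyclotomic class $C_i^{\epsilon}$ determined by which class $\alpha^{re}-1$ lands in. So the whole multiset $\mathrm{Int}({C_0^{\epsilon}}^{\prime})$ is a union of $f-1$ full cyclotomic classes of order $\epsilon$, and the number of times $C_j^{\epsilon}$ appears equals the number of $r \in \{1,\ldots,f-1\}$ with $\alpha^{re}-1 \in \alpha^j C_0^{\epsilon}$. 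The first step is therefore to identify this count with $\phi_j$.

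To make that identification, I would connect the count above to Definition \ref{defg1CD}. The set $\Phi_j = \{x \in C_0^e : x \neq 1,\ x-1 \in \alpha^j C_0^{\epsilon}\}$ consists of elements $x = \alpha^{re}$ (for $1 \leq r \leq f-1$, excluding $r=0$ which gives $x=1$) with $\alpha^{re}-1 \in \alpha^j C_0^{\epsilon}$. This is precisely the set of $r$-values contributing a copy of $C_j^{\epsilon}$ to the union, so $\phi_j = |\Phi_j|$ is exactly the multiplicity of $C_j^{\epsilon}$ in $\mathrm{Int}({C_0^{\epsilon}}^{\prime})$. Hence, in full generality,
\begin{equation*}
\mathrm{Int}({C_0^{\epsilon}}^{\prime}) = \bigcup_{j=0}^{\epsilon-1} \phi_j \, (C_j^{\epsilon}),
\end{equation*}
and as a consistency check one verifies $\sum_{j=0}^{\epsilon-1}\phi_j = f-1$, matching the total count of transversals.

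With this master formula in hand, both parts follow by substituting the hypotheses. For part (i), assuming $\phi_1 = \cdots = \phi_{\epsilon-1}$, the classes $C_1^{\epsilon},\ldots,C_{\epsilon-1}^{\epsilon}$ each appear $\phi_1$ times while $C_0^{\epsilon}$ appears $\phi_0$ times. Since $\bigcup_{j=1}^{\epsilon-1} C_j^{\epsilon} = G^* \setminus C_0^{\epsilon}$ (the classes of order $\epsilon$ partition $G^*$), the multiset equals $\phi_0(C_0^{\epsilon}) + \phi_1(G^* \setminus C_0^{\epsilon})$, which is exactly the DPDF condition of Definition \ref{PDF} with $S = C_0^{\epsilon}$ (and collapses to the PDS condition when $\epsilon = e$, so that ${C_0^{\epsilon}}^{\prime}$ is the single set $C_0^e$). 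For part (ii), the stronger hypothesis $\phi_0 = \cdots = \phi_{\epsilon-1}$ forces every class to appear with the same multiplicity $\phi_0$, so the union collapses to $\phi_0(G^*)$, giving the DDF (or DS) condition.

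I do not expect a serious obstacle here, since the heavy structural work is already carried out in Proposition \ref{prop1CD}; the only point requiring care is the bookkeeping that aligns the multiplicity count from $\bigcup_r D_r$ with the index set $\Phi_j$ of Definition \ref{defg1CD}, in particular the exclusion of $r=0$ (equivalently $x=1$) and the fact that distinct $r$ give distinct $\alpha^{re} \in C_0^e$ so that $|\Phi_j|$ is genuinely a count of transversals rather than of field elements with multiplicity. Once the master formula is established cleanly, parts (i) and (ii) are immediate substitutions.
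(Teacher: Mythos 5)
Your proof is correct and follows exactly the route the paper intends: the paper states this theorem without proof precisely because it is the immediate consequence of Proposition \ref{prop1CD} (writing $\mathrm{Int}({C_0^{\epsilon}}^{\prime})$ as the union of the $f-1$ diagonals, each a full class $C_j^{\epsilon}$) together with Definition \ref{defg1CD} (which makes $\phi_j$ the multiplicity of $C_j^{\epsilon}$). Your bookkeeping on the exclusion of $r=0$ and the bijection between $r$-values and elements of $C_0^e\setminus\{1\}$ is exactly the point that needs checking, and you handle it correctly.
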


We now look at a useful relationship between transversals and diagonals of transversals:

\begin{proposition}\label{lem5CD}
Let $q=ef+1$. For each $1 \leq r \leq f-1$, 
\begin{itemize}
\item[(i)] $T_{f-r} = -T_r$
\item[(ii)] $D_{f-r} = -D_r$
\end{itemize}
\end{proposition}

\begin{proof}
\begin{itemize}
\item[(i)] By Lemma \ref{lem1CD}(i), $-T_r = -(\alpha^{re}-1)C_0^e  = (1-\alpha^{re})C_0^e$ for each $1 \leq r \leq f-1$. Since $\alpha^{ef}=1$, $-T_r = (\alpha^{ef}-\alpha^{re})C_0^e = \alpha^{re}(\alpha^{e(f-r)}-1)C_0^e = (\alpha^{e(f-r)}-1)C_0^e = T_{f-r}$.
\item[(ii)] Since by Proposition \ref{prop1CD}(ii), $D_r = (\alpha^{re}-1)C_0^{\epsilon}$, the proof is analogous to part (i).
\end{itemize}
\end{proof}

Since $-T_r=(-1)T_r$ and $-C_0^{\epsilon}=(-1)C_0^{\epsilon}$, we may exploit this relationship between a transversal and its negative by determining the cyclotomic class containing the element $-1$.  Note that, for an odd prime power $q=\epsilon \rho+1$, $q \equiv 1 \mod 2 \epsilon$ precisely when $\rho$ is even, and  $q \equiv \epsilon + 1 \mod 2 \epsilon$ precisely when $\rho$ is odd; moreover when $\epsilon$ is even and $\rho$ is odd, $e$ must be an odd multiple of $\epsilon$.  The following result summarises results from \cite{Sto}:

\begin{lemma}\label{lem4CD}
Let $q=\epsilon \rho+1$.
\begin{itemize}
\item[(a)]
Let $\epsilon$ be an even integer.  Then
\begin{itemize}
\item[(i)] when $q \equiv 1 \mod 2\epsilon$, $-1 \in C_0^{\epsilon}$.
\item[(ii)] when $q \equiv \epsilon + 1 \mod 2\epsilon$, $-1 \in \alpha^{\frac{\epsilon}{2}}C_0^{\epsilon}$.
\end{itemize}
\item[(b)] Let $\epsilon$ be an odd integer. Then, $-1 \in C_0^{\epsilon}$.
\end{itemize}
\end{lemma}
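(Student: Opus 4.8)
The plan is to determine which cyclotomic class of order $\epsilon$ contains $-1$ by locating $-1$ as a power of the primitive element $\alpha$. Since $\alpha$ is primitive in $GF(q)^*$, which is cyclic of order $q-1 = \epsilon\rho$, the element $-1$ is the unique element of order $2$ (when $q$ is odd), so $-1 = \alpha^{(q-1)/2} = \alpha^{\epsilon\rho/2}$. The membership $-1 \in \alpha^s C_0^\epsilon = C_s^\epsilon$ is then governed by the residue of $\epsilon\rho/2$ modulo $\epsilon$: we have $-1 \in C_s^\epsilon$ where $s \equiv \tfrac{\epsilon\rho}{2} \pmod{\epsilon}$. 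So the whole lemma reduces to a parity computation of $\tfrac{\epsilon\rho}{2} \bmod \epsilon$ in each of the stated cases.

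First I would handle part (b), the odd-$\epsilon$ case. Here $q$ could in principle be a power of $2$, but since $\epsilon \mid q-1$ and we are told $\rho > 1$, the cleanest route is to observe that $C_0^\epsilon = \langle \alpha^\epsilon \rangle$ is the unique subgroup of $GF(q)^*$ of index $\epsilon$, i.e. the set of $\epsilon$-th powers. Since $\gcd(2,\epsilon)=1$, the element $-1$ (if $q$ is odd) is itself a perfect $\epsilon$-th power: indeed $-1 = (-1)^{\epsilon}$ when $\epsilon$ is odd, exhibiting $-1$ as an $\epsilon$-th power, so $-1 \in C_0^\epsilon$. (When $q$ is a power of $2$, $-1 = 1 \in C_0^\epsilon$ trivially.) This disposes of (b) without any congruence bookkeeping.

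For part (a), with $\epsilon$ even, $q$ must be odd (as $\epsilon \mid q-1$ forces $q-1$ even), so $-1 = \alpha^{\epsilon\rho/2}$ and I would compute $\tfrac{\epsilon\rho}{2} \bmod \epsilon$. Writing $\tfrac{\epsilon\rho}{2} = \epsilon\cdot\tfrac{\rho}{2}$ when $\rho$ is even shows $\tfrac{\epsilon\rho}{2} \equiv 0 \pmod \epsilon$, giving $-1 \in C_0^\epsilon$; the stated hypothesis $q \equiv 1 \pmod{2\epsilon}$ is exactly equivalent to $\rho$ being even (as noted in the paragraph preceding the lemma, since $q = \epsilon\rho+1$), yielding case (i). When $\rho$ is odd, write $\tfrac{\epsilon\rho}{2} = \tfrac{\epsilon}{2}\rho$; since $\tfrac{\epsilon}{2}$ is an integer and $\rho$ is odd, $\tfrac{\epsilon}{2}\rho \equiv \tfrac{\epsilon}{2} \pmod{\epsilon}$ (because $\tfrac{\epsilon}{2}(\rho-1)$ is a multiple of $\epsilon$, as $\rho-1$ is even), so $-1 \in \alpha^{\epsilon/2}C_0^\epsilon$; and $q \equiv \epsilon+1 \pmod{2\epsilon}$ is equivalent to $\rho$ odd, giving case (ii).

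The only genuinely delicate point — and the one I would state carefully rather than grind through — is the translation between the congruence conditions on $q$ modulo $2\epsilon$ and the parity of $\rho$, together with confirming $\gcd\!\big(\tfrac{\epsilon}{2},\epsilon\big)$ behaviour so that $\alpha^{\epsilon/2}C_0^\epsilon$ is genuinely a distinct class from $C_0^\epsilon$. These equivalences are already asserted in the remark preceding the lemma, so the main task is simply to invoke them and match each residue class to the correct cyclotomic coset; no finite-field computation beyond identifying $-1$ as the order-$2$ element is required.
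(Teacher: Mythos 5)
Your proposal is correct. Note that the paper itself gives no proof of this lemma at all: it is stated as a summary of results from Storer's book on cyclotomy, so there is nothing internal to compare against. Your argument --- writing $-1=\alpha^{(q-1)/2}=\alpha^{\epsilon\rho/2}$ as the unique element of order $2$ when $q$ is odd and reducing the exponent modulo $\epsilon$, with the translation $q\equiv 1 \bmod 2\epsilon \Leftrightarrow \rho$ even and $q\equiv \epsilon+1 \bmod 2\epsilon \Leftrightarrow \rho$ odd exactly as asserted in the paragraph preceding the lemma --- is the standard elementary derivation and checks out in every case, including the clean observation for part (b) that $-1=(-1)^\epsilon$ is an $\epsilon$-th power when $\epsilon$ is odd (and the degenerate even-characteristic case $-1=1$). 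The one remark I would drop is the worry about whether $\alpha^{\epsilon/2}C_0^\epsilon$ is distinct from $C_0^\epsilon$: the cosets $\alpha^iC_0^\epsilon$ for $0\le i\le \epsilon-1$ are automatically pairwise distinct since $C_0^\epsilon$ has index $\epsilon$, so no gcd check is needed. In short, you have supplied a complete self-contained proof of a fact the authors only cite.
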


We now identify precisely how $D_r$ and $D_{f-r}$ are related.  Note that, when $f$ is even, $T_{\frac{f}{2}}$ and $D_{\frac{f}{2}}$ occur.  These form their own negatives; as such, both will be treated separately to other transversals/diagonals of transversals in later counting arguments. 

\begin{proposition}\label{diagonals}
\begin{itemize}
\item[(a)]
Let $D_r=\alpha^i C_0^{\epsilon}$, where $1 \leq r \leq f-1$ ($r \neq \frac{f}{2}$) and $0 \leq i \leq \epsilon-1$.
\begin{itemize}
\item[(i)]
Let $q=\epsilon \rho+1$ and let $\epsilon$ be an even integer.  Then
\begin{itemize}
\item[(I)] when $q \equiv 1 \mod 2\epsilon$, $D_{f-r}=\alpha^i C_0^{\epsilon}$.
\item[(II)] when $q \equiv \epsilon + 1 \mod 2\epsilon$, $D_{f-r}= \alpha^{i+\frac{\epsilon}{2}}C_0^{\epsilon}$.
\end{itemize}
\item[(ii)] Let $q = \epsilon\rho + 1$, and let $\epsilon$ be an odd integer. Then, $D_{f-r}= \alpha^i C_0^{\epsilon}$. 
\end{itemize}
\item[(b)] If $f$ is even, $T_{\frac{f}{2}}=(-2)C_0^e$ and $D_{\frac{f}{2}}=(-2)C_0^{\epsilon}$.
\end{itemize}
\end{proposition}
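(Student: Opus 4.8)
The plan is to reduce both parts to two facts already in hand: the negation relation $D_{f-r}=-D_r$ from Proposition \ref{lem5CD}(ii), and the precise location of $-1$ among the cyclotomic classes of order $\epsilon$ supplied by Lemma \ref{lem4CD}. No fresh cyclotomic computation should be needed beyond these.

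For part (a), I would begin with $D_r=\alpha^i C_0^{\epsilon}$ and apply Proposition \ref{lem5CD}(ii) to write $D_{f-r}=-D_r=(-1)\alpha^i C_0^{\epsilon}$. The key observation is that, since $C_0^{\epsilon}=\langle\alpha^{\epsilon}\rangle$ is a subgroup, left-multiplying the coset $\alpha^i C_0^{\epsilon}$ by any field element merely translates it to another coset. Concretely, if $-1\in\alpha^s C_0^{\epsilon}$, say $-1=\alpha^s c$ with $c\in C_0^{\epsilon}$, then $cC_0^{\epsilon}=C_0^{\epsilon}$ gives $(-1)\alpha^i C_0^{\epsilon}=\alpha^{s+i}C_0^{\epsilon}$, with the index read modulo $\epsilon$. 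It then only remains to substitute the value of $s$ from Lemma \ref{lem4CD} in each case: $s=0$ when $\epsilon$ is even and $q\equiv 1 \bmod 2\epsilon$, yielding (I); $s=\tfrac{\epsilon}{2}$ when $\epsilon$ is even and $q\equiv \epsilon+1 \bmod 2\epsilon$, yielding (II); and $s=0$ when $\epsilon$ is odd, yielding (ii).

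For part (b), I would first note that $f$ even forces $q$ to be odd, since then $q-1=ef$ is even; hence $GF(q)^{*}$ is a cyclic group of even order containing a unique element of order two. As $\alpha$ is primitive, $\alpha^{(q-1)/2}=\alpha^{ef/2}=\alpha^{(f/2)e}$ is exactly that element, namely $-1$. Substituting $\alpha^{(f/2)e}=-1$ into the transversal formula $T_{f/2}=(\alpha^{(f/2)e}-1)C_0^e$ from Lemma \ref{lem1CD}(i) gives $T_{f/2}=(-2)C_0^e$, and substituting into $D_{f/2}=(\alpha^{(f/2)e}-1)C_0^{\epsilon}$ from Proposition \ref{prop1CD}(ii) gives $D_{f/2}=(-2)C_0^{\epsilon}$.

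The arguments are short, so there is no real obstacle; the only point requiring care is the coset bookkeeping in part (a)—verifying that multiplication by $-1$ shifts the coset index by exactly the index $s$ of the class containing $-1$, and reading that index modulo $\epsilon$ so that (II) indeed names the genuine class $\alpha^{i+\epsilon/2}C_0^{\epsilon}$. The main conceptual step is simply recognising that the whole statement is the single identity $D_{f-r}=(-1)D_r$ combined with knowing where $-1$ sits.
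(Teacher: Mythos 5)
Your proof is correct and follows exactly the route the paper intends: the paper states no explicit proof but sets it up in the preceding sentence (``we may exploit this relationship between a transversal and its negative by determining the cyclotomic class containing the element $-1$''), which is precisely your combination of $D_{f-r}=-D_r$ from Proposition \ref{lem5CD}(ii) with Lemma \ref{lem4CD}, and your part (b) argument via $\alpha^{(q-1)/2}=-1$ substituted into $T_{f/2}=(\alpha^{(f/2)e}-1)C_0^e$ and $D_{f/2}=(\alpha^{(f/2)e}-1)C_0^{\epsilon}$ is the standard and evidently intended computation.
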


In the case when $\epsilon=2$, we have the following useful result.

\begin{lemma}\label{central}
Let GF$(q)$ be a finite field $q=ef+1$, with $e$ even.  Let $f$ be even. 
\begin{itemize}
    \item[(i)] If $q \equiv 1 \mod 8$, $D_{\frac{f}{2}}=C_0^2$.
    \item[(ii)] If $q \equiv 5 \mod 8$, $D_{\frac{f}{2}} = C_1^2$.
\end{itemize}
\end{lemma}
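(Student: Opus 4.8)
The plan is to reduce the statement to the quadratic-residue character of $-2$ in $GF(q)$, using the explicit description of $D_{f/2}$ already in hand. First I would invoke Proposition~\ref{diagonals}(b) with $\epsilon=2$: since $f$ is even, $D_{\frac{f}{2}}=(-2)C_0^{2}$, where $C_0^{2}=\langle\alpha^2\rangle$ is the set of nonzero squares. Because $C_0^{2}$ is a subgroup of index $2$ in $GF(q)^*$, the coset $(-2)C_0^{2}$ equals $C_0^{2}$ precisely when $-2\in C_0^{2}$ (that is, $-2$ is a square) and equals $C_1^{2}$ precisely when $-2\in C_1^{2}$. Thus the lemma amounts to deciding the quadratic character of $-2$ under the two congruence hypotheses. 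Note also that since $e,f\geq 2$ are both even, $q=ef+1$ is an odd prime power, so these square/non-square notions are meaningful.

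Next I would peel off the factor $-1$. As $e$ and $f$ are both even, $q-1=ef\equiv 0\pmod 4$, so $q\equiv 1\pmod 4$; this is exactly the hypothesis $q\equiv 1\pmod{2\epsilon}$ of Lemma~\ref{lem4CD}(a)(i) with $\epsilon=2$, yielding $-1\in C_0^{2}$. Consequently multiplication by $-1$ fixes each class $C_0^{2},C_1^{2}$ setwise, so $(-2)C_0^{2}=2C_0^{2}$, and therefore $D_{\frac{f}{2}}=C_0^{2}$ if and only if $2$ is a square. Equivalently, writing $\chi$ for the quadratic character, $\chi(-2)=\chi(-1)\chi(2)=\chi(2)$. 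This conveniently removes the sign and leaves only the character of $2$ to be determined.

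The remaining step, which needs the most care, is the character of $2$ in a \emph{prime power} field, where one cannot simply quote the Legendre-symbol supplement valid for primes. I would show that $2\in C_0^{2}$ if and only if $q\equiv\pm 1\pmod 8$. A clean uniform argument uses a primitive eighth root of unity $\omega$ in a suitable extension together with $y=\omega+\omega^{-1}$, which satisfies $y^2=2$; then $2$ is a square in $GF(q)$ iff $y\in GF(q)$ iff $y^{q}=y$, and since the Frobenius map gives $y^{q}=\omega^{q}+\omega^{-q}$ with $\omega$ of order $8$, this holds exactly when $q\equiv\pm 1\pmod 8$ (the residues $3,5$ yield $y^{q}=-y\neq y$). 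One may alternatively cite this standard evaluation of the character of $2$. Combining with $q\equiv 1\pmod 4$, the only admissible residues modulo $8$ are $1$ and $5$, and $2$ is a square iff $q\equiv 1\pmod 8$. Hence $q\equiv 1\pmod 8$ gives $D_{\frac{f}{2}}=C_0^{2}$ and $q\equiv 5\pmod 8$ gives $D_{\frac{f}{2}}=C_1^{2}$, as claimed.

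I expect the main obstacle to be precisely the justification of the quadratic character of $2$ over a prime power rather than a prime: the eighth-root-of-unity/Frobenius computation handles all odd prime powers at once, and it is the only place where field arithmetic beyond the already-cited Proposition~\ref{diagonals} and Lemma~\ref{lem4CD} is required. Everything else is bookkeeping with the index-$2$ subgroup $C_0^{2}$.
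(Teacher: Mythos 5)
Your proof is correct and follows essentially the same route as the paper: both reduce via Proposition~\ref{diagonals}(b) to deciding whether $-2\in C_0^2$, use Lemma~\ref{lem4CD} to handle the sign, and then invoke the quadratic character of $2$ in an odd-order finite field. The only difference is that the paper simply cites Storer for the fact that $2\in C_0^2$ iff $q\equiv\pm1\pmod 8$, whereas you supply a self-contained Frobenius argument with $y=\omega+\omega^{-1}$, which is a valid (and slightly more complete) substitute for the citation.
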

\begin{proof}
It is known (see \cite{Sto}) that for a finite field $GF(q)$ of odd order,  if $q \equiv 1,7 \mod 8$ then $2 \in C_0^2$, while if $q \equiv 3,5 \mod 8$ then $2 \in C_1^2$.  The result follows by combining this with Proposition \ref{diagonals} and Lemma \ref{lem4CD}.
\end{proof}

Finally, we prove the following consequences of the structural results we have established, which will be useful in what follows.  For $0 \leq i \leq \epsilon-1$ we define
$$ \Psi_i := \{1 \leq r < \frac{f}{2}: \alpha^{re} \in \Phi_i \} \mbox{ and } \psi_i := |\Psi_i|.$$

\begin{theorem}\label{combined}
Let $q=ef+1=\epsilon \rho+1$ ($\epsilon|e$).
\begin{itemize}
\item[(a)] Let $q\equiv 1 \mod 2\epsilon$ (i.e. $\rho$ is even).
Suppose that $\phi_1=\phi_j$ for all $1 \leq j \leq \epsilon-1$, i.e. ${C_0^{\epsilon}}^{\prime}$ is a DPDF (or PDS when $e=\epsilon$). 
\begin{itemize}
\item[(i)] If $f$ is odd then $\phi_0=2 \psi_0$ and $\phi_1=2 \psi_1$.
\item[(ii)] If $f$ is even and $\epsilon>2$ then $\phi_0=2 \psi_0+1$ and $\phi_1=2 \psi_1$. 
\item[(iii)] If $f$ is even, $\epsilon=2$ and $q \equiv 1 \mod 8$ then $\phi_0=2 \psi_0+1$ and $\phi_1=2 \psi_1$. 
\item[(iv)] If $f$ is even, $\epsilon=2$ and $q \equiv 5 \mod 8$ then $\phi_0=2\psi_0$ and $\phi_1=2 \psi_1+1$.
\end{itemize}
\item[(b)] Let $q\equiv  \epsilon+1 \mod 2\epsilon$ (i.e. $\rho$ is odd).  Let $\epsilon$ be odd.  Suppose that $\phi_1=\phi_j$ for all $1 \leq j \leq \epsilon-1$.  Then $\phi_0=2 \psi_0$ and $\phi_1=2\psi_1$.
\item[(c)] Let $q\equiv \epsilon+1 \mod 2\epsilon$.  If $\epsilon$ is even, then $\phi_0=\phi_{\frac{\epsilon}{2}}$.
\end{itemize}
\end{theorem}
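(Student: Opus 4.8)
The plan is to reduce everything to a counting argument over the index set $\{1,\dots,f-1\}$ under the involution $r \mapsto f-r$. First I would reinterpret the two counts: by Proposition \ref{prop1CD}(iii), $\alpha^{re}-1 \in \alpha^i C_0^{\epsilon}$ exactly when $D_r = \alpha^i C_0^{\epsilon}$, so $\phi_i$ is the number of $r \in \{1,\dots,f-1\}$ with $D_r = \alpha^i C_0^{\epsilon}$, while $\psi_i$ is the same count restricted to $1 \le r < f/2$. The involution $r \mapsto f-r$ pairs up $\{1,\dots,f-1\}$, fixing only $r=f/2$ when $f$ is even; Proposition \ref{diagonals} records precisely how the class of $D_{f-r}$ is obtained from that of $D_r$, and this is controlled by the location of $-1$ (Lemma \ref{lem4CD}). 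So the whole theorem becomes a bookkeeping exercise: count each pair's contribution to the $\phi_i$ and handle the fixed point separately.

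For part (a), $q \equiv 1 \bmod 2\epsilon$ gives $-1 \in C_0^{\epsilon}$ (Lemma \ref{lem4CD}, for both parities of $\epsilon$), so $D_{f-r}=-D_r$ lies in the \emph{same} class as $D_r$; hence each pair $\{r,f-r\}$ with $r<f/2$ contributes $2$ to a single $\phi_i$. When $f$ is odd (i) there is no fixed point, so $\phi_i = 2\psi_i$ for every $i$, giving the claim. When $f$ is even the fixed point $r=f/2$ has $D_{f/2}=(-2)C_0^{\epsilon}$ (Proposition \ref{diagonals}(b)), a single class $\alpha^{c}C_0^{\epsilon}$, so $\phi_i = 2\psi_i + \delta_{i,c}$. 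The crux is identifying $c$, and this is where the cases split. For $\epsilon>2$ (ii) I would argue by parity: the hypothesis $\phi_1=\dots=\phi_{\epsilon-1}$ forces all these $\phi_i$ equal, but if $c \in \{1,\dots,\epsilon-1\}$ then $\phi_c = 2\psi_c+1$ is odd while some other $\phi_i = 2\psi_i$ (such an $i$ exists since $\epsilon-1 \ge 2$) is even, a contradiction; hence $c=0$, yielding $\phi_0 = 2\psi_0+1$ and $\phi_1 = 2\psi_1$. For $\epsilon=2$ this parity argument is unavailable, since the equality hypothesis over $\{1,\dots,\epsilon-1\}=\{1\}$ is vacuous, so I would instead read $c$ directly from Lemma \ref{central}: $c=0$ when $q \equiv 1 \bmod 8$ (iii) and $c=1$ when $q \equiv 5 \bmod 8$ (iv). Determining $c$ is the main obstacle, and the parity trick is the key idea that lets $\epsilon>2$ go through without any field-element computation.

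Parts (b) and (c) are handled by the same pairing. In (b), $\epsilon$ and $\rho$ both odd force $q-1=\epsilon\rho$ odd, so $q$ is even and $f$ is odd; there is no fixed point and $-1 \in C_0^{\epsilon}$ (Lemma \ref{lem4CD}(b)), so the clean doubling $\phi_i = 2\psi_i$ of case (a)(i) applies and in particular gives $\phi_0 = 2\psi_0$, $\phi_1 = 2\psi_1$. In (c), $q \equiv \epsilon+1 \bmod 2\epsilon$ with $\epsilon$ even gives $-1 \in \alpha^{\epsilon/2}C_0^{\epsilon}$, so by Proposition \ref{diagonals}(a)(i)(II) the involution \emph{shifts} the class index by $\epsilon/2$: a pair $\{r,f-r\}$ with $D_r = \alpha^i C_0^{\epsilon}$ contributes one element to class $i$ and one to class $i+\epsilon/2$. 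Here $\epsilon$ even with $\rho$ odd forces $e$ to be an odd multiple of $\epsilon$, hence $f$ odd and again no fixed point, so the bookkeeping gives $\phi_i = \psi_i + \psi_{i-\epsilon/2}$ with indices taken mod $\epsilon$. Taking $i=0$ and $i=\epsilon/2$ and using $-\epsilon/2 \equiv \epsilon/2 \pmod{\epsilon}$ then yields $\phi_0 = \psi_0+\psi_{\epsilon/2} = \phi_{\epsilon/2}$, as required; note that, unlike (a)(ii), this part needs no DPDF hypothesis.
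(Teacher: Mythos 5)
Your proposal is correct and follows essentially the same route as the paper: pair the diagonals $D_r$ and $D_{f-r}$ via Proposition \ref{diagonals}, handle the fixed point $D_{f/2}$ separately using Proposition \ref{diagonals}(b) together with the parity contradiction for $\epsilon>2$ and Lemma \ref{central} for $\epsilon=2$, and use the index shift by $\epsilon/2$ for part (c). You also correctly fill in details the paper leaves implicit (e.g.\ why $f$ must be odd in parts (b) and (c), and the explicit bookkeeping $\phi_i=\psi_i+\psi_{i-\epsilon/2}$), so no gaps remain.
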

\begin{proof}
\begin{itemize}
\item[(a)] Since  $q\equiv 1 \mod 2\epsilon$, we have $\rho$ even.  If $f \neq \rho$ then $f$ can be odd or even.\\
Suppose $f$ is odd.  Consider the diagonals of transversals of $\Delta(C_0^e)$.  There is no central diagonal $D_{\frac{f}{2}}$.  By Proposition \ref{diagonals}, $D_r=D_{f-r}$.  Hence if $r \in \Psi_i$, i.e. $\alpha^{re} \in \Phi_i$ for some $i$ then also $\alpha^{(f-r)e} \in \Phi_i$, i.e. the $f-1$ diagonals of ${\rm Int}({C_0^{\epsilon}}^{\prime})$ pair up into $\frac{f-1}{2}$ disjoint pairs, each pair contributing two copies of a given $\alpha^i C_0^{\epsilon}$.  Hence $\phi_0=2 \psi_0$ and $\phi_1=2\psi_1$.\\
Now suppose $f$ is even (note this case holds when $e=\epsilon$ and $f=\rho$).  There is one central diagonal $D_{\frac{f}{2}}$ and $\frac{f-2}{2}$ pairs of diagonals which pair up according to the rule $D_r=D_{f-r}$.  Finally, consider $D_{\frac{f}{2}}$.  For $\epsilon>2$, if $\alpha^{\frac{f}{2}e} \in \Phi_i$ where $i \in \{0, \ldots, \epsilon-1\}$, i.e. $D_{\frac{f}{2}}=(\alpha^{\frac{f}{2}e}-1)C_0^{\epsilon}=\alpha^i C_0^{\epsilon}$, then $\phi_i=2 \psi_i+1$, while $\phi_j= 2 \psi_j$ for all $j \neq i$.  If $i \neq 0$, this contradicts the assumption that $\phi_1=\phi_j$ for all $1 \leq j \leq \epsilon-1$, hence $i=0$.  For $\epsilon=2$, this argument does not apply; instead we apply Lemma \ref{central}.
\item[(b)] Similar to (a).
\item[(c)] Since  $q\equiv \epsilon+1 \mod 2\epsilon$, we have $\rho$ odd.  Since $\rho=\frac{e}{\epsilon}f$,  $\epsilon|e$ and $\rho$ is odd, $f$ must be odd.  Consider the diagonals of transversals of $\Delta(C_0^e)$.  Since $f$ is odd, there is no central diagonal $D_{\frac{f}{2}}$.  By Proposition \ref{diagonals}, if $D_r=\alpha^i C_0^{\epsilon}$ then $D_{f-r}=\alpha^{i+\frac{\epsilon}{2}} C_0^{\epsilon}$.  Hence the $f-1$ diagonals of ${\rm Int}({C_0^{\epsilon}}^{\prime})$ pair up into $\frac{f-1}{2}$ disjoint pairs, and there are the same number of $\alpha^{re} \in \Phi_i$ as there are in $\Phi_{i+\frac{\epsilon}{2}}$.  Hence, for all $0 \leq i \leq \epsilon-1$, $\phi_i=\phi_{i+\frac{\epsilon}{2}}$, so in particular $\phi_0=\phi_{\frac{\epsilon}{2}}$.
\end{itemize}
\end{proof}

\begin{corollary}\label{combined_cor}
Let $q=ef+1=\epsilon \rho+1$ ($\epsilon|e$).
\begin{itemize}
\item[(a)] Let $q\equiv 1 \mod 2\epsilon$.
\begin{itemize}
\item[(i)] If $C_0^{\epsilon}$ is a PDS, then it must be proper (i.e. $C_0^{\epsilon}$ cannot be a difference set).
\item[(ii)] Let $f$ be even.  If ${C_0^{\epsilon}}^{\prime}$ is a DPDF, then it must be proper.
\end{itemize}
Let $\epsilon>2$.
\begin{itemize}
\item[(iii)] If $C_0^{\epsilon}$ is a (proper) PDS then $-2, 2 \in C_0^{\epsilon}$.
\item[(iv)] Let $f$ be even.  If ${C_0^{\epsilon}}^{\prime}$ is a (proper) DPDF then $-2, 2 \in C_0^{\epsilon}$.
\end{itemize}
\item[(b)] Let $q\equiv \epsilon+1 \mod 2\epsilon$ and suppose $\epsilon$ is even.
\begin{itemize}
\item[(i)] $C_0^{\epsilon}$ cannot be a proper PDS.
\item[(ii)] If $\epsilon<e$, then ${C_0^{\epsilon}}^{\prime}$ cannot be a proper DPDF.
\end{itemize}
\end{itemize}
\end{corollary}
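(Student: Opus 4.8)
The plan is to prove Corollary \ref{combined_cor} as a direct consequence of Theorem \ref{combined}, which has already established the relevant parity relations between the $\phi_i$ and the $\psi_i$. The key observation is that $C_0^{\epsilon}$ being a PDS corresponds (by Lemma \ref{PDS/DPDFConditions} with $\epsilon=e$, i.e.\ the single-class case) to the condition $\phi_1=\cdots=\phi_{\epsilon-1}$, and it is \emph{proper} precisely when $\phi_0\neq\phi_1$; likewise, ${C_0^{\epsilon}}^{\prime}$ is a DPDF precisely when $\phi_1=\cdots=\phi_{\epsilon-1}$ and proper precisely when $\phi_0\neq\phi_1$ (Theorem \ref{prop2CD}). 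So every assertion in the corollary reduces to comparing $\phi_0$ with $\phi_1$ under the stated congruence conditions, and Theorem \ref{combined} supplies exactly the parity data to do this.

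For part (a), I would argue as follows. For (i), under $q\equiv 1 \bmod 2\epsilon$ with $C_0^{\epsilon}$ a PDS, we are in the single-class case $e=\epsilon$, where $f=\rho$ is even (since $q\equiv 1\bmod 2\epsilon$ forces $\rho$ even). Theorem \ref{combined}(a)(ii)/(iii)/(iv) then gives $\phi_0$ and $\phi_1$ of opposite parity (one is $2\psi+1$, the other is $2\psi'$), so $\phi_0\neq\phi_1$ and the PDS must be proper. For (ii), the same parity mismatch from Theorem \ref{combined}(a)(ii)--(iv) applies whenever $f$ is even, giving $\phi_0\neq\phi_1$, so any DPDF is proper. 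For (iii) and (iv) with $\epsilon>2$: here I would invoke the mechanism inside the proof of Theorem \ref{combined}(a)(ii), namely that the central diagonal $D_{f/2}=(\alpha^{(f/2)e}-1)C_0^{\epsilon}=(-2)C_0^{\epsilon}$ by Proposition \ref{diagonals}(b), and that the PDS/DPDF condition forces this central diagonal to lie in $\Phi_0$, i.e.\ $-2\in C_0^{\epsilon}$; combining with Lemma \ref{lem4CD}(a)(i) ($-1\in C_0^{\epsilon}$ when $q\equiv 1\bmod 2\epsilon$) yields $2\in C_0^{\epsilon}$ as well.

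For part (b), under $q\equiv\epsilon+1\bmod 2\epsilon$ with $\epsilon$ even, Theorem \ref{combined}(c) gives $\phi_0=\phi_{\epsilon/2}$. For (i): if $C_0^{\epsilon}$ were a proper PDS we would need $\phi_0\neq\phi_1=\cdots=\phi_{\epsilon-1}$, but $\phi_{\epsilon/2}$ is among the indices $1,\ldots,\epsilon-1$ (since $\epsilon$ even means $1\le\epsilon/2\le\epsilon-1$), so $\phi_0=\phi_{\epsilon/2}=\phi_1$, contradicting properness; hence $C_0^{\epsilon}$ cannot be a proper PDS. For (ii) with $\epsilon<e$, the identical argument applies to the DPDF condition: properness requires $\phi_0\neq\phi_1$, but Theorem \ref{combined}(c) forces $\phi_0=\phi_{\epsilon/2}=\phi_1$, so no proper DPDF exists.

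The main obstacle is bookkeeping rather than any deep new idea: I must be careful to cite the correct sub-case of Theorem \ref{combined} in each part, to handle the $\epsilon=2$ versus $\epsilon>2$ split (where parts (iii)--(iv) explicitly restrict to $\epsilon>2$ so that the central-diagonal argument runs without appealing to Lemma \ref{central}), and to verify that $\epsilon/2$ genuinely falls in the range $\{1,\ldots,\epsilon-1\}$ so that the equality $\phi_0=\phi_{\epsilon/2}$ actually collides with the PDS/DPDF hypothesis $\phi_1=\phi_j$. The one point requiring slight care is extracting the fact $-2\in C_0^{\epsilon}$ from the proof of Theorem \ref{combined} rather than its statement; this is where I would spell out that the central diagonal equals $(-2)C_0^{\epsilon}$ and that the DPDF/PDS hypothesis pins it into $\Phi_0$.
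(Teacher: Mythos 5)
Your proposal is correct and follows essentially the same route as the paper: parts (a)(i)--(ii) via the parity mismatch between $\phi_0$ and $\phi_1$ from Theorem \ref{combined}(a), parts (a)(iii)--(iv) by extracting from the proof of Theorem \ref{combined}(a)(ii) that the central diagonal forces $-1\in\Phi_0$ (hence $-2\in C_0^{\epsilon}$, and $2\in C_0^{\epsilon}$ via Lemma \ref{lem4CD}), and part (b) from the collision $\phi_0=\phi_{\epsilon/2}$ with $\epsilon/2\in\{1,\ldots,\epsilon-1\}$. The bookkeeping points you flag (the $\epsilon=2$ split and the need to read $-2\in C_0^{\epsilon}$ out of the proof rather than the statement of Theorem \ref{combined}) are exactly the ones the paper handles.
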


\begin{proof}
\begin{itemize}
\item[(a)] We apply Theorem \ref{combined} (a).  For $\epsilon>2$, by assumption,  in both the PDS and DPDF cases, we have $\phi_1=\phi_j$ ($1 \leq j \leq \epsilon-1$).  For (i), we take $e=\epsilon$ and $f=\rho$; as $\rho$ is even,  $\phi_0$ is of opposite parity to $\phi_1$, so $C_0^{\epsilon}$ is not a difference set.  Part (ii) follows similarly by taking $e>\epsilon$ in the lemma.  For $\epsilon=2$ and $f$ even, it is also the case that $\phi_0$ and $\phi_1$ have opposite parity.  For (iii) and (iv), from the proof of Theorem \ref{combined}(a)(ii), we have that $-1=\alpha^{\frac{f}{2}e}\in \Phi_0$, so $-2 \in C_0^{\epsilon}$, and hence $2 \in C_0^{\epsilon}$.
\item[(b)] Apply Theorem \ref{combined}(c) with $e=\epsilon$ for (i) and $e>\epsilon$ for part (ii).  Since $\phi_0=\phi_{\frac{\epsilon}{2}}$ and $\frac{\epsilon}{2} \in \{1, \ldots, \epsilon\}$, it is impossible for $\phi_0$ to take a distinct value from all other $\phi_j$, $j \in \{1, \ldots, \epsilon\}$.
\end{itemize}
\end{proof}

In (a), the $\epsilon=2$ case is different to the $\epsilon>2$ case due to the absence of $\phi_j$ with $2 \leq j \leq \epsilon-1$.  We note that parts (iii) and (iv) of Corollary \ref{combined_cor}(a) do not hold for $\epsilon=2$.

\subsection{External differences: EPDFs}

For the external case, we may make analogous definitions to those of the previous subsection.
\begin{definition}\label{ExternalDef}
Let $q=ef+1$ and let $\alpha$ be a primitive element of $GF(q)$.
\begin{itemize}
\item[(i)] For each $1 \leq r \leq f-1$ and $1 \leq j \leq e-1$,  define 
$$ T_{(r,j)} := \{ \alpha^{ne+j}-\alpha^{me}:n-m=r \mod f, \, 0 \leq n,m \leq f-1 \}.$$ \\  
Clearly, $T_{(r,j)} \subseteq \Delta(C_j^e, C_0^e)$; we refer to the set $T_{(r,j)}$ as an \emph{external transversal} of $\Delta(C_j^e, C_0^e)$. Note $|T_{(r,j)}|=f$.
\item[(ii)]  For each $1 \leq r \leq f-1$ and $1 \leq j \leq e-1$, let $a_{(r,j)} \in \{0,...,e-1\}$ be such that $\alpha^{re+j}-1 \in C_{a_{(r,j)}}^e = \alpha^{a_{(r,j)}}C_0^e$.
\end{itemize}
\end{definition}

This result summarizes some cyclotomic relationships using our notation.
\begin{lemma}\label{ExternalLem}
\begin{itemize}
\item[(i)] For $1 \leq r \leq f-1$ and $1 \leq j \leq e-1$, each external transversal $T_{(r,j)} = (\alpha^{re+j}-1)C_0^e$ is a copy of the cyclotomic class $C_{a_{(r,j)}}$.
    \item[(ii)] $\Delta(C_j^e,C_0^e) = \bigcup\limits_{r=1}^{f}T_{(r,j)} = \bigcup\limits_{r=1}^{f}C_{a_{(r,j)}}^e = \bigcup\limits_{i=0}^{e-1}(i,j)_e(C_i^e)$. 
    \item[(iii)] For $0 \leq l \leq e-1$, $\Delta(C_{j+l}^e,C_l^e) = \alpha^l\Delta(C_j^e,C_0^e) = \bigcup\limits_{r=1}^{f}\alpha^l T_{(r,j)} = \bigcup\limits_{r=1}^{f}\alpha^lC_{a_{(r,j)}}^e = \bigcup\limits_{i=0}^{e-1}(i,j)_e(\alpha^lC_i^e)$.
\end{itemize}
\end{lemma}

\begin{proposition}\label{EPDFLem}
Let $\rm{GF}(q)$ be a finite field, where $q$ is a prime power and $e \geq 3$ is a divisor of $q-1$.
Let $I \subset \{0,1,\ldots,e-1\}$ ($|I|=u,  2\leq u \leq e-1$) and $\mathcal{D}^{\prime} = \{C_i^e\}_{i \in I}$.  \\
If there exist $B, X$ such that $B = (0,i)_e = (i,i)_e$ and $X=(i,j)_e$ for all $1 \leq i\neq{j} \leq e-1$, then
\begin{itemize}
\item[(i)]  $\Delta(C_s^e,C_t^e) = B (C_s^e \cup C_t^e) + X(G^*\setminus  (C_s^e \cup C_t^e))$ for all $0 \leq s\neq{t} \leq e-1$;
\item[(ii)]  $\mathcal{D}^{\prime}$ is a $(q,u,\frac{q-1}{e},2B(u-1)+X(u-1)(u-2),Xu(u-1))$-EPDF (proper when $B \neq X$). 
\end{itemize}
\end{proposition}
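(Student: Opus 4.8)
The plan is to first establish the single-pair formula in (i) directly from the cyclotomic decomposition in Lemma \ref{ExternalLem}, and then obtain (ii) by summing this formula over all ordered pairs of classes in $\mathcal{D}^{\prime}$ and collecting coefficients of each cyclotomic class. The whole argument is a coefficient-bookkeeping exercise; the only delicate point is the modular index arithmetic in (i).

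For (i), I would start from Lemma \ref{ExternalLem}(iii). Writing $j = s-t$ and $l = t$ (indices read mod $e$), it gives
\[
\Delta(C_s^e, C_t^e) = \Delta(C_{j+l}^e, C_l^e) = \sum_{i=0}^{e-1}(i,j)_e\, (C_{i+l}^e).
\]
Reindexing by $k = i+l$, i.e.\ $i = k-t$, the coefficient of $C_k^e$ becomes $(k-t,\,s-t)_e$, with both arguments taken mod $e$. Since $s \neq t$, the second argument $j = s-t$ lies in $\{1,\ldots,e-1\}$. I would then split into three cases in $k$: when $k = t$ the coefficient is $(0,j)_e = B$; when $k = s$ it is $(j,j)_e = B$; and when $k \notin \{s,t\}$ the first argument $k-t$ is nonzero and distinct from $j$, so both arguments lie in $\{1,\ldots,e-1\}$ and differ, whence the coefficient is $X$. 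The one step needing care is verifying that $k \neq s,t$ is exactly what forces $k-t \neq 0$ and $k-t \neq j$, so that the hypothesis applies with arguments in the correct range; this is immediate once written out. Collecting cases yields precisely $\Delta(C_s^e,C_t^e) = B(C_s^e \cup C_t^e) + X(G^* \setminus (C_s^e \cup C_t^e))$.

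For (ii), I would sum the formula from (i) over the $u(u-1)$ ordered pairs $(s,t)$ with $s,t \in I$ and $s \neq t$, and compute the coefficient of an arbitrary class $C_k^e$ in ${\rm Ext}(\mathcal{D}^{\prime})$. The count separates according to whether $k \in I$. If $k \in I$, then $C_k^e$ picks up coefficient $B$ from exactly the pairs with $k = s$ or $k = t$, of which there are $2(u-1)$, and coefficient $X$ from the remaining $(u-1)(u-2)$ pairs, giving total $2B(u-1) + X(u-1)(u-2)$. If $k \notin I$, then $k$ is never equal to $s$ or $t$, so all $u(u-1)$ pairs contribute $X$, giving $Xu(u-1)$. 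Since $S = \bigcup_{i \in I} C_i^e$ is exactly the union of the classes indexed by $I$, this is precisely the EPDF equation with $\lambda = 2B(u-1) + X(u-1)(u-2)$ and $\mu = Xu(u-1)$.

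Finally, for properness I would note that $\lambda = \mu$ reduces, after cancelling the nonzero factor $u-1$ (valid since $u \geq 2$), to $2B = 2X$, i.e.\ $B = X$; hence $\mathcal{D}^{\prime}$ is proper precisely when $B \neq X$. I do not expect a genuine obstacle here: both parts are forced directly by Lemma \ref{ExternalLem} together with the hypothesis, and the only thing demanding attention is applying the cyclotomic-number conditions with correctly reduced indices in part (i).
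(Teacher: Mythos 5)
Your proposal is correct and follows essentially the same route as the paper: part (i) via Lemma \ref{ExternalLem} with the shift $\Delta(C_s^e,C_t^e)=\alpha^t\Delta(C_{s-t}^e,C_0^e)$ and a case split on which class the coefficient $(k-t,s-t)_e$ lands in, and part (ii) by counting, for each class, the $2(u-1)$ ordered pairs contributing $B$ versus the $(u-1)(u-2)$ (or $u(u-1)$) pairs contributing $X$. Your index bookkeeping in (i) is in fact slightly more explicit than the paper's, but the argument is the same.
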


\begin{proof}
For (i), apply Lemma \ref{ExternalLem}; note $\Delta(C_s^e,C_t^e) =\alpha^t\Delta(C_j^e,C_0^e)$ where $j  \equiv s-t \mod e$.

For (ii),  the multiset ${\rm Ext}(\mathcal{D}^{\prime}) = \bigcup\limits_{l \neq{j} ; l,j \in I} \Delta(C_l^e,C_j^e)$ is the union of $u(u-1)$ multisets of the form $\Delta(C_l^e,C_j^e)$ (since $|I| = u$).  For a fixed $l \in I$, there are $u-1$ multisets of the form $\Delta(C_l^e,C_j^e)$ in ${\rm Ext}(\mathcal{D}^{\prime})$, where $j \in I$ and $j \neq l$, and $u-1$ multisets of the form $\Delta(C_j^e,C_l^e)$. There are  $u(u-1) - 2(u-1) = (u-1)(u-2)$ multisets of the form $\Delta(C_s^e,C_j^e)$, where $s\neq{j}\neq{l} \in I$. Applying (i), we see that $C_l^e$ occurs precisely $2B(u-1) + X(u-1)(u-2)$ times in the multiset ${\rm Ext}(\mathcal{D}^{\prime})$. Finally, again by (i), for any $t \not\in I$,  $C_t^e$ occurs precisely $u(u-1)X$ times in ${\rm Ext}(\mathcal{D}^{\prime}) = \bigcup\limits_{l\neq{j} : l,j \in I} \Delta(C_l^e,C_j^e)$. The EPDF result follows. 
\end{proof}

\subsection{A combined approach: uniform cyclotomy}

Observe that, by combining the conditions in Lemma \ref{PDS/DPDFConditions} and Proposition \ref{EPDFLem}, we could obtain a class of objects which are both DPDFs and EPDFs.  The following useful definition was introduced by Baumert, Mills and Ward in \cite{BaMiWa}.

\begin{definition}\label{Baumert}
The cyclotomic numbers $(i,j)_e$ over $GF(q)$ are \emph{uniform} if $(0,i)_e=(i,0)_e=(i,i)_e=(0,1)_e$ for $i \neq 0$, and $(i,j)_e=(1,2)$ for $0 \neq i \neq j \neq 0$.
\end{definition}

So, if the cyclotomic numbers are uniform, we would have a family of DPDFs which are also EPDFs.  The following result from \cite{BaMiWa} is important here.

\begin{theorem}\label{MillsThm}
Let $\rm{GF}(q)$ be a finite field where $q$ is a power of a prime $p$ and let $e \geq 3$ be a divisor of $q-1$.  The cyclotomic numbers of order $e$ over ${\rm GF}(q)$ are uniform if and only if $-1$ is a power of $p$ modulo $e$. If this holds, then either $p=2$ or $f = \frac{q-1}{e}$ is even; $q = s^2$ with $s \equiv 1 \mod e$; and 
\begin{equation*}
    (0,0)_e = \left(\frac{s-1}{e}\right)^2 - (e-3)\left(\frac{s-1}{e}\right) -1
\end{equation*}
\begin{equation*}
    (0,i)_e = (i,0)_e = (i,i)_e = \left(\frac{s-1}{e}\right)^2 + \left(\frac{s-1}{e}\right) \,\rm{for}\,i\neq{0},
\end{equation*}
\begin{equation*}
    (i,j)_e = \left(\frac{s-1}{e}\right)^2 \,\rm{for}\, 0 \neq i \neq j.
\end{equation*}
\end{theorem}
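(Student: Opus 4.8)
The natural route is through Gauss sums, since the cyclotomic numbers $(i,j)_e$ admit a clean character-theoretic expansion. First I would fix a multiplicative character $\chi$ of $\gf{q}^*$ of order exactly $e$, normalised so that $\chi$ is constant on each class $C_i^e$, and write the indicator of $C_i^e$ as $\frac{1}{e}\sum_{a=0}^{e-1}\zeta^{-ai}\chi^a$ where $\zeta=e^{2\pi i/e}$. Substituting these indicators into the defining count $(i,j)_e=|\{z\in C_i^e: z+1\in C_j^e\}|$ expresses each cyclotomic number as
$$ (i,j)_e=\frac{1}{e^2}\sum_{a,b=0}^{e-1}\zeta^{-ai-bj}\,S(a,b), \qquad S(a,b)=\sum_{z\neq 0,-1}\chi^a(z)\chi^b(z+1), $$
and the inner sums $S(a,b)$ are, up to boundary corrections when $a$, $b$ or $a+b$ vanishes mod $e$, Jacobi sums, which factor as $J(\chi^a,\chi^b)=g(\chi^a)g(\chi^b)/g(\chi^{a+b})$ in terms of the Gauss sums $g(\chi^c)=\sum_{x}\chi^c(x)\psi(x)$ (for a fixed nontrivial additive character $\psi$). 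Thus everything reduces to understanding the $g(\chi^c)$ for $1\le c\le e-1$.

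For the direction assuming $-1\equiv p^j \pmod e$ for some $j$, the point is that this is exactly the \emph{semi-primitive} condition under which all the relevant Gauss sums are \emph{pure}. I would combine the Frobenius invariance $g(\chi^p)=g(\chi)$ with the standard identities $g(\bar\chi)=\chi(-1)\overline{g(\chi)}$ and $|g(\chi)|^2=q$: since $\chi^{p^j}=\bar\chi$, these force $g(\chi)^2=\chi(-1)q$, so $g(\chi)$ is a real or purely imaginary multiple of $\sqrt q$, and the same holds for every $\chi^c$ (because $p^j\equiv -1$ descends modulo every divisor of $e$, so each $\chi^c$ is again semi-primitive). The crux is to pin down the common value: the explicit semi-primitive Gauss-sum evaluation gives $g(\chi^c)=w\sqrt q$ for a single sign $w\in\{\pm1\}$ independent of $c$. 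Granting this, every Jacobi sum with $a+b\not\equiv 0$ equals $w\sqrt q$, while the degenerate cases ($a=0$, $b=0$, or $a+b\equiv 0$) contribute the trivial-character Gauss sum $-1$ and the value $\chi^a(-1)q$; feeding these back through the character sum and collapsing via orthogonality yields values of $(i,j)_e$ depending only on the pattern of vanishing among $i$, $j$, $i-j$, which is precisely the uniform shape of Definition \ref{Baumert}.

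Along the way the structural claims fall out. Since $j$ is minimal with $p^j\equiv -1$, the order of $p$ modulo $e$ is the even number $2j$, which divides $t$ where $q=p^t$; hence $q=s^2$ with $s=p^{t/2}$, and as $s\equiv \pm 1\pmod e$ I may replace $s$ by $-s$ if needed to arrange $s\equiv 1\pmod e$. Writing $k=(s-1)/e$ one gets $f=(q-1)/e=k(s+1)$, which is even whenever $s$ is odd, i.e.\ whenever $p\neq 2$. To obtain the displayed constants I would set $g(\chi^c)=ws$, substitute into the collapsed character sum, and read off $(0,0)_e$, the common value $(0,i)_e=(i,0)_e=(i,i)_e$, and $(i,j)_e$ for $0\neq i\neq j$; the global check $\sum_{i,j}(i,j)_e=q-2$ (the count of $z\neq 0,-1$) confirms the arithmetic, since $e^2k^2+2ek-1=(s-1)^2+2(s-1)-1=s^2-2$.

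For the converse I would run the expansion backwards: if the $(i,j)_e$ have the uniform shape, then appropriate differences of cyclotomic numbers recover the quantities $g(\chi^a)g(\chi^b)/g(\chi^{a+b})$, and uniformity forces each $g(\chi^c)$ to be pure (real up to the fixed factor $\sqrt q$); by the standard characterisation of pure Gauss sums over $\gf{q}$, the relation $g(\chi^c)=g(\overline{\chi^c})$ can hold for all $c$ only when $\bar\chi$ is a Frobenius-power of $\chi$, i.e.\ when $-1\equiv p^j\pmod e$. I expect the main obstacle to be the explicit semi-primitive Gauss-sum evaluation: establishing not merely that $g(\chi^c)^2=\chi^c(-1)q$ but the exact common sign $w$ (and its dependence on $p$, $e$ and $t/2j$) requires the Davenport--Hasse lifting relation or a Stickelberger-congruence argument, and it is this sign-determination, rather than the character-sum bookkeeping, that carries the real content of the theorem.
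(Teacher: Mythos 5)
First, a point of comparison: the paper does not prove Theorem \ref{MillsThm} at all --- it is quoted as a known result of Baumert, Mills and Ward \cite{BaMiWa} --- so there is no in-paper argument to measure yours against. Your outline (character-sum expansion of $(i,j)_e$, reduction to Jacobi and Gauss sums, semi-primitive evaluation for the forward direction) is indeed the route taken in that reference, and your bookkeeping of the degenerate terms, the derivation of $q=s^2$ with $s\equiv 1 \pmod e$, and the parity claim for $f$ are all sound.

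However, as a proof the proposal has two genuine gaps, both at the step you yourself identify as carrying the real content. First, the assertion that the semi-primitive evaluation gives $g(\chi^c)=w\sqrt q$ with a single sign $w$ independent of $c$ is false: the sign depends on the order of $\chi^c$ through $\gamma_c=t/(2j_c)$, where $j_c$ is minimal with $p^{j_c}\equiv -1$ modulo ${\rm ord}(\chi^c)$. For instance with $q=9$, $e=4$ one computes $g(\chi)=-3$ for $\chi$ of order $4$ but $g(\chi^2)=+3$ for the quadratic character. What is actually true, and what the displayed formulae require, is that the ratios $J(\chi^a,\chi^b)=g(\chi^a)g(\chi^b)/g(\chi^{a+b})$ with $a,b,a+b\not\equiv 0\pmod e$ all collapse to the common value $s$; verifying that the $c$-dependent signs cancel in these ratios is exactly the computation you have deferred, and without it the values of $(0,0)_e$, $(i,0)_e$ and $(i,j)_e$ do not follow. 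Second, the converse is not a valid deduction as written: purity of all the $g(\chi^c)$ does not characterise semi-primitivity (pure Gauss sums exist in non-semi-primitive, e.g.\ index-$2$, situations), and an equality of values $g(\chi^c)=g(\overline{\chi^c})$ does not force $\bar\chi$ to be a Frobenius power of $\chi$. The standard argument instead observes that uniformity makes the Jacobi sums rational integers, hence fixed by the full Galois group of $\mathbb{Q}(\zeta_e)$, and then invokes Stickelberger's description of their prime-ideal factorisation to conclude $-1\in\langle p\rangle$ modulo $e$. These two points are where the theorem actually lives, so the proposal as it stands is a correct roadmap but not yet a proof.
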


We present the following condition which is equivalent to the key condition in Theorem \ref{MillsThm}:

\begin{lemma}\label{Equiv_Baumert}
Let  $q^{\prime}=p^n$ where $p$ is prime, and let $e \geq 3$ be a divisor of $q^{\prime}-1$. Then the following two conditions are equivalent:
\begin{itemize}
\item[(i)] $-1$ is a power of $p$ modulo $e$ 
\item[(ii)] there exists a prime power $q$ such that $q^{\prime}=q^{2b}$ ($b \in \mathbb{N}$) and $e|q+1$.
\end{itemize}
\end{lemma}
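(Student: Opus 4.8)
The plan is to prove the equivalence of the two number-theoretic conditions by working with the multiplicative order of $p$ modulo $e$. The statement concerns when $-1$ lies in the cyclic subgroup $\langle p \rangle \leq (\mathbb{Z}/e\mathbb{Z})^*$, and I would translate this into a condition on that order. Write $d = \mathrm{ord}_e(p)$, the multiplicative order of $p$ modulo $e$. The key observation is that $-1$ is a power of $p$ modulo $e$ if and only if $-1 \in \langle p \rangle$, which (since $\langle p \rangle$ is cyclic of order $d$) happens precisely when $p^{d/2} \equiv -1 \pmod e$; equivalently, $d$ is even and the unique element of order $2$ in $\langle p \rangle$ is $-1$.

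\textbf{Forward direction (i) $\Rightarrow$ (ii).} First I would suppose $-1 \equiv p^a \pmod e$ for some $a$. Then $p^{2a} \equiv 1 \pmod e$, and I would set $q = p^a$, so that $e \mid p^a + 1 = q + 1$. It remains to arrange that $q^{\prime} = q^{2b}$ for some $b$. Here the relationship is $q^{\prime} = p^n$ and $q = p^a$, so I want $n = 2ab$, i.e. $2a \mid n$. The delicate point is that $a$ need not divide $n$ a priori, so one cannot simply take the given exponent: I would instead take $a$ to be the \emph{least} exponent with $p^a \equiv -1 \pmod e$, namely $a = d/2$ where $d = \mathrm{ord}_e(p)$. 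Since $e \mid q^{\prime} - 1 = p^n - 1$, we have $d \mid n$; combined with the fact that $p^{d/2} \equiv -1$ forces $n/d$ to be such that $p^n \equiv 1$, I would argue that $n$ is an even multiple of $d/2$, giving $2b = n/(d/2) \in 2\mathbb{N}$ and hence $q^{\prime} = q^{2b}$ with $e \mid q + 1$.

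\textbf{Reverse direction (ii) $\Rightarrow$ (i).} This is the easier direction. Given a prime power $q$ with $q^{\prime} = q^{2b}$ and $e \mid q + 1$, I would write $q = p^c$ (same prime $p$, since $q^{\prime} = q^{2b}$ is a power of $p$), so $q \equiv -1 \pmod e$ gives $p^c \equiv -1 \pmod e$, exhibiting $-1$ as a power of $p$ modulo $e$ directly.

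\textbf{The main obstacle} I expect is the divisibility bookkeeping in the forward direction: ensuring that the exponent witnessing $-1 \equiv p^a$ can be chosen so that $2a \mid n$, so that $q = p^a$ genuinely satisfies $q^{\prime} = q^{2b}$ with integer $b$. The careful step is to use $d = \mathrm{ord}_e(p) \mid n$ together with the fact that $-1 \in \langle p \rangle$ forces the order $d$ to be even and $p^{d/2} \equiv -1$; then writing $n = d \cdot t$ and checking that $q^{\prime} = p^{dt} = (p^{d/2})^{2t}$ lets me take $q = p^{d/2}$ and $b = t$, with $e \mid p^{d/2} + 1 = q + 1$. All other steps are routine consequences of the cyclic structure of $\langle p \rangle$ in $(\mathbb{Z}/e\mathbb{Z})^*$.
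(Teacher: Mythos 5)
Your proposal is correct and follows essentially the same route as the paper: the paper takes $s$ to be the smallest positive integer with $p^s \equiv -1 \pmod e$ (your $d/2$), deduces $2s \mid n$ from $e \mid p^n-1$, and sets $q=p^s$, $b=n/2s$; the reverse direction is the same one-line observation in both. Your version merely makes the divisibility step more explicit via $d=\mathrm{ord}_e(p)$, which is a fair elaboration of what the paper leaves implicit.
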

\begin{proof}
For the forward direction: let $s$ be the smallest positive integer such that $p^s \equiv -1 \mod e$.  Then $2s|n$ and so $n=2sb$ for some $b \in \mathbb{N}$, i.e. $q^{\prime}=p^n=p^{2sb}=(p^s)^{2b}$, and $e|p^s+1$.  The reverse direction is immediate as $q=p^a$ for some $a \in \mathbb{N}$.
\end{proof}

This leads to the following result.  Observe that, as a special case of part (iii) (taking $e=q+1$), we obtain the result of Calderbank and Kantor in Section 9 of \cite{CalKan} which is presented in different notation in Section 10 of \cite{Ma}.

\begin{theorem}\label{ParamThm2}
Let ${\rm GF}(q')$ be a finite field of order $q^{\prime} = q^{2\beta}$, where $\beta \in \mathbb{N}$ and $q$ is a power of a prime p. Let $e\mid{q+1}$ and set $\eta = \left(\frac{(-q)^{\beta}-1}{e}\right)$. For any $I \subset \{0,1,\ldots,e-1\}$, ($|I|=u, 2 \leq u \leq e-1$) where $\mathcal{D}^{\prime} = \{C_i^e\}_{i \in I}$, 
\begin{itemize}
\item[(i)] each $C_i^{e}$ is a (regular) $(q^{\prime},\frac{q^{\prime}-1}{e},\eta^2-(e-3)\eta-1,\eta^2+\eta)$-PDS;
\item[(ii)] $\mathcal{D}^{\prime}$ is a $(q^{\prime},u,\frac{q'-1}{e},u\eta^2+(u+2-e)\eta-1,u(\eta^2+\eta))$-DPDF and a $(q^{\prime},u,\frac{q'-1}{e},u(u-1)\eta^2+2(u-1)\eta,u(u-1)\eta^2)$-EPDF.
\item[(iii)]  $D=\cup_{i \in I} C_i^e$ is a (regular) $(q^{\prime}, \frac{u(q^{\prime}-1)}{e}, u^2 \eta^2+(3u-e)\eta -1, u^2 \eta^2+u \eta)$-PDS, which is proper except when $\eta=1=2u-e$ or $\eta=-1=2u-e$.
\end{itemize}
\end{theorem}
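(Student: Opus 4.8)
The plan is to reduce the entire theorem to the uniform cyclotomy package of Theorem \ref{MillsThm}, and then read off all three parts by substituting the resulting cyclotomic numbers into the structural results already established in Sections 3.1 and 3.2.

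First I would verify that the hypotheses force the order-$e$ cyclotomic numbers over $\mathrm{GF}(q')$ to be uniform. Since $q'=q^{2\beta}$ and $e\mid q+1$, Lemma \ref{Equiv_Baumert} gives that $-1$ is a power of $p$ modulo $e$, so Theorem \ref{MillsThm} applies. The one point requiring care is the identification of the square root $s$ appearing there: I would take $s=(-q)^{\beta}$, noting that $s^2=q^{2\beta}=q'$ and that, because $q\equiv -1 \pmod e$, we have $s=(-q)^{\beta}\equiv 1 \pmod e$ (whereas the positive root $q^{\beta}$ satisfies $q^{\beta}\equiv(-1)^{\beta}$, which is $\equiv 1$ only when $\beta$ is even). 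Thus $s=(-q)^{\beta}$ is exactly the root congruent to $1$ modulo $e$ demanded by Theorem \ref{MillsThm}, and $\eta=\frac{(-q)^{\beta}-1}{e}=\frac{s-1}{e}$. Substituting, the cyclotomic numbers become $(0,0)_e=\eta^2-(e-3)\eta-1$, $(0,i)_e=(i,0)_e=(i,i)_e=\eta^2+\eta$ for $i\neq 0$, and $(i,j)_e=\eta^2$ for $0\neq i\neq j$.

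With these values in hand, parts (i) and (ii) are direct applications of the earlier results. For (i) I would apply Lemma \ref{PDS/DPDFConditions}(i) with $A=(0,0)_e$ and $B=(i,0)_e$; regularity holds because $0\notin C_i^e$ and $-1\in C_0^e$ (either $p=2$ or $f$ is even, by Theorem \ref{MillsThm}), so $C_i^e=-C_i^e$. For the DPDF in (ii) I would apply Lemma \ref{PDS/DPDFConditions}(ii), giving internal parameters $\lambda=A+(u-1)B$ and $\mu=uB$, which simplify to $u\eta^2+(u+2-e)\eta-1$ and $u(\eta^2+\eta)$. For the EPDF in (ii) I would apply Proposition \ref{EPDFLem} with $B=\eta^2+\eta$ and $X=\eta^2$ (valid since $e\geq 3$ and $2\leq u\leq e-1$), obtaining $2B(u-1)+X(u-1)(u-2)=u(u-1)\eta^2+2(u-1)\eta$ and $Xu(u-1)=u(u-1)\eta^2$. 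These are routine expansions.

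Finally, part (iii) follows by feeding (ii) into Theorem \ref{thm4}: since $\mathcal{D}^{\prime}$ is simultaneously a DPDF and an EPDF on the common union $S=D$, Theorem \ref{thm4} shows $D$ is a PDS whose two parameters are the sums of the corresponding DPDF and EPDF parameters, collapsing to $u^2\eta^2+(3u-e)\eta-1$ and $u^2\eta^2+u\eta$; regularity of $D$ is inherited from the $C_i^e$. The properness clause is then a one-line check: $D$ fails to be proper exactly when these two parameters coincide, i.e. when $(2u-e)\eta=1$, which over the integers forces $\eta=1=2u-e$ or $\eta=-1=2u-e$. The Calderbank--Kantor result is recovered by taking $e=q+1$ and needs no separate argument. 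I expect the only genuine obstacle to be the correct choice of the signed square root $s=(-q)^{\beta}$ in the first step; after that the proof is bookkeeping of the algebraic simplifications and of the range constraints $e\geq 3$, $2\leq u\leq e-1$ required by Proposition \ref{EPDFLem}.
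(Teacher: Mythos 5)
Your proposal is correct and follows essentially the same route as the paper: Lemma \ref{Equiv_Baumert} plus Theorem \ref{MillsThm} with the signed root $s=(-q)^{\beta}$ to get uniform cyclotomic numbers, then Lemma \ref{PDS/DPDFConditions} and Proposition \ref{EPDFLem} for (i)--(ii), and Theorem \ref{thm4} with the factorisation $(2u-e)\eta=1$ for (iii). The only cosmetic difference is that you justify regularity via $-1\in C_0^e$ rather than the paper's parameter analysis; both are valid.
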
 

\begin{proof}
By Lemma \ref{Equiv_Baumert}, the conditions of Theorem \ref{MillsThm} are satisfied, and so the cyclotomic numbers are uniform with the given values, where (in the notation of Theorem  \ref{MillsThm}) we set $\eta=\frac{s-1}{e}$.   To determine the specific value of $\eta$: from Theorem \ref{MillsThm},  $q^{\prime}=s^2$ with $s \equiv 1 \mod e$, and as $q^{\prime} = q^{2\beta}$, it is clear that $s \in  \{\pm {q}^{\beta} \}$.  Since $-q \equiv 1 \mod e$,  we see that $(-q)^{\beta}$ is always congruent to 1 modulo $e$. Hence,  we may set $\eta = \frac{s-1}{e} = \frac{(-q)^{\beta}-1}{e}$.   Parts (i) and (ii) then follow: Lemma \ref{PDS/DPDFConditions} then gives the PDS and DPDF results, while Proposition \ref{EPDFLem} yields the EPDF result.  In (i), analysis of parameters shows that $C_i^e$ is a proper PDS except in the trivial case when $q=4$, hence regular; in (ii), the DPDF is proper since $e \neq 2$ and the EPDF is proper since $u \neq 1$.  For (iii), combine (ii) with Theorem \ref{thm4}. Equating the two frequencies, $D$ is a difference set precisely when $\eta(2u-e)=1$.  This leads to the two stated cases; in both cases $q=4u^2$, i.e. $p=2$ (so $D$ is always regular).  
\end{proof}

\begin{remark}\label{Hadamard}
When $D$ in Theorem \ref{ParamThm2}(iii) is a DS, the first case forces $e=3$ and $\beta=2$, i.e. $D$ must be a $(16,10,6)$-DS, while in the second case, $D$ is a Hadamard Difference Set in $GF(2^{2 \beta})$ with parameters $(4u^2, 2u^2-u, u^2-u)$ where $u=2^{\beta-1}$, $\beta \in \mathbb{N}$.    This was noted and proved in Section 8 of \cite{BaMiWa}. For the first case, take $\mathcal{D}^{\prime}=\{C_i^3,C_j^3\}$ in $GF(16)$ where $i \neq j$;  for the second, take $\mathcal{D}^{\prime}=\{C_i^5, C_j^5\}$ in $GF(16)$ where $i \neq j$.
\end{remark}

\begin{example}
Let $q^{\prime}=3^6=729$ in Theorem \ref{ParamThm2}; here the possible values of $q$ are $3$ and $3^3=27$.  For $q=3$, we may take $e=4$, while for $q=27$ we may take $e=4,7,14,28$.  In the case when $q=3$ and $e=4$, $\eta=\frac{(-3)^3-1}{4}=-7$; by  Theorem \ref{ParamThm2}(i), each $C_i^4$ is a $(729,182,69,42)$-PDS, which may be used to form DPDFs, EPDFs and PDSs by parts (ii) and (iii).  When $q=27$ and $e=14$, $\eta=\frac{(-27)^1-1}{14}=-2$, and each $C_i^{14}$ is a $(729,52,25,2)$-PDS.
\end{example}

\section{Cyclotomic PDSs and applications to DPDFs/EPDFs}

In this paper, we will use cyclotomic PDSs in two distinct ways to construct DPDFs and EPDFs: as the constituent sets, or by partitioning.  Both of these require a knowledge of when $C_0^e$ forms a partial difference set (or difference set).  Throughout, $q$ is a prime power which we express as $q=ef+1$.

\subsection{When is $C_0^e$ a DS or PDS?}

It is possible to determine the conditions under which $C_0^e$ is a DS or PDS for specific values of small $e$ by using cyclotomic numbers directly.  In general, explicit evaluation of cyclotomic numbers is a difficult problem.  Selected values for $e=2,3,4,6$ and $8$ are given in \cite{Sto}; further results have been  obtained for $e \leq 12$ (see \cite{Leh} and \cite{Dic}) and certain values up to $e=24$ (see \cite{BeEvWi}). 

For difference sets, various results are given in \cite{Sto}.  The recent paper \cite{Xia} describes current progress on the question of when $C_0^e$ forms a difference set and establishes new results, including the case when $q$ is even.  The key results from \cite{Sto} and \cite{Xia} relevant to this paper are given below.  Note that for $q=ef+1$, $C_0^e$ is not a difference set if $e$ is odd.

\begin{theorem}\label{DSresults}
Let $q=ef+1$ be a prime power.
\begin{itemize}
\item[(i)] If $q$ is even then $C_0^e$ is not a difference set for any value of $e$.
\item[(ii)] If $q$ is odd and $C_0^e$ is a difference set then $e$ is even and $f$ is odd.
\item[(iii)] If $q=2f+1 \equiv 3 \mod 4$ then $C_0^2$ is a difference set.
\item[(iv)] If $q=4f+1$ then $C_0^4$ is a difference set if and only if $q=1+4t^2$ and $t$ is odd (here $f$ is odd).
\item[(v)] If $q=6f+1$, there is no $C_0^6$ which is a difference set. 
\item[(vi)] If $q=8f+1$ then $C_0^8$ is a difference set if and only if $q$ admits the simultaneous representations $q=9+64y^2=1+8b^2$ where $y \equiv b \equiv 1 (\mod 2)$ (here $f$ is odd).
\end{itemize}
\end{theorem}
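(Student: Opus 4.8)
The plan is to reduce the entire statement to a single master criterion and then handle the general parity conditions (parts (i),(ii)) and the small-$e$ cases (parts (iii)--(vi)) by different techniques. The master criterion is supplied directly by Lemma~\ref{lem1CD}(ii): since $\Delta(C_0^e)=\bigcup_{i=0}^{e-1}(i,0)_e\,(C_i^e)$, the class $C_0^e$ is a difference set if and only if the cyclotomic numbers $(i,0)_e$ are independent of $i$ for $0\le i\le e-1$. Dually, writing $\eta_i=\sum_{x\in C_i^e}\psi(x)$ for the Gaussian periods with respect to the canonical additive character $\psi$ and noting that a nonzero $b\in C_j^e$ sends $C_0^e$ to $C_j^e$, the set $C_0^e$ is a difference set precisely when the periods $\eta_0,\dots,\eta_{e-1}$ share a common modulus. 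I would use the cyclotomic-number form for the necessary conditions and the Gaussian-period form to streamline the explicit small-$e$ evaluations.

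For part (ii) I would argue entirely within the parity framework of Theorem~\ref{combined} and Corollary~\ref{combined_cor}. Suppose $q$ is odd and $C_0^e$ is a difference set, hence in particular a PDS, so the hypothesis $\phi_1=\phi_j$ of Theorem~\ref{combined}(a) holds with $\epsilon=e$; here Proposition~\ref{thmg4CD} gives $\phi_0=(0,0)_e$ and $\phi_1=(1,0)_e$. If $f$ were even then $q\equiv 1\bmod 2e$, and Theorem~\ref{combined}(a)(ii)--(iv) forces $\phi_0$ and $\phi_1$ to have opposite parity, contradicting the master criterion; hence $f$ is odd. Since $q$ is odd, $ef=q-1$ is even, and with $f$ odd this forces $e$ to be even, completing (ii). Part (i), the even-characteristic case, lies outside this machinery: when $q$ is even one has $-1=1$, so the pairing $T_{f-r}=-T_r$ underlying Theorem~\ref{combined} degenerates. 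Here I would follow the characteristic-two Gauss-sum analysis of \cite{Xia}, in which the relevant Gauss sum attains absolute value $\sqrt{q}$ only in a semiprimitive/uniform regime whose period values (governed by Theorem~\ref{MillsThm}) never coincide across all classes, ruling out a difference set for every $e$.

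For parts (iii)--(vi) the strategy is to substitute the explicitly known cyclotomic numbers of order $e\in\{2,4,6,8\}$ (tabulated in \cite{Sto}, together with the order-$\le 12$ evaluations of \cite{Leh} and \cite{Dic}) into the condition that $(i,0)_e$ be constant in $i$, and to read off the resulting Diophantine constraints on the representation of $q$ by quadratic forms. For $e=2$ this is cleanest via the period form: $C_0^2$ is the set of quadratic residues, the two periods are $\tfrac12\bigl(-1\pm\sqrt{(-1)^{(q-1)/2}q}\bigr)$, and these have equal modulus exactly when the radicand is negative, i.e. $q\equiv 3\bmod 4$, yielding (iii). For $e=4$ the order-$4$ numbers are expressible through the representation $q=s^2+4t^2$ with $s\equiv1\bmod4$, and imposing equality of the $(i,0)_4$ collapses to $q=1+4t^2$ with $t$ odd, giving (iv). For $e=6$ one checks against the order-$6$ formulas (via $q=A^2+3B^2$) that the six numbers $(i,0)_6$ can never simultaneously agree, producing the nonexistence in (v). Part (vi) is the same computation for $e=8$, where the octic numbers depend on two coupled quadratic representations and equality of the $(i,0)_8$ reduces to the stated conditions $q=9+64y^2=1+8b^2$ with $y\equiv b\equiv1\bmod 2$.

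The main obstacle is twofold. First, part (i) genuinely leaves the paper's own framework, since the parity counting of Theorem~\ref{combined} is built on the $x\mapsto -x$ pairing, which collapses in characteristic two; a separate Gauss-sum/valuation argument as in \cite{Xia} is therefore unavoidable. Second, the $e=8$ bookkeeping in part (vi) is delicate: the octic cyclotomic numbers are only expressible in terms of two simultaneous quadratic representations of $q$, so extracting the clean conditions $q=9+64y^2=1+8b^2$ with $y\equiv b\equiv1\bmod 2$ requires careful tracking of the sign and parity choices in those representations. The remaining parts are routine once the relevant cyclotomic-number tables are in hand.
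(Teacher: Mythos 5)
The paper does not prove this theorem: it is presented explicitly as a summary of ``key results from \cite{Sto} and \cite{Xia}'', so there is no in-paper argument to compare against, and your proposal has to be judged as an independent reconstruction. In outline it follows the standard route by which these results are established in the cited sources. Your master criterion --- that $C_0^e$ is a difference set iff the $(i,0)_e$ are constant in $i$, equivalently iff all Gaussian periods share a common modulus --- is correct and is exactly Lemma~\ref{lem1CD}(ii) read off in character form; substituting the known order-$e$ cyclotomic numbers for $e\in\{2,4,6,8\}$ is indeed how (iii)--(vi) are obtained in \cite{Sto} and \cite{Leh}. Your part (ii) is the one place where you give a genuinely self-contained argument inside the paper's own framework: with $\epsilon=e$ the hypothesis of Theorem~\ref{combined}(a) holds automatically for a difference set, the parity split forces $\phi_0\not\equiv\phi_1\pmod 2$ when $f$ is even, and $e$ even then follows from $ef=q-1$ being even with $f$ odd. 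This is consistent with, and essentially recovers, Corollary~\ref{combined_cor}(a)(i); it is a nice observation and arguably more than the paper itself offers.

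Two caveats. First, parts (i), (v) and (vi) are not actually proved in your write-up: (i) is delegated wholesale to the Gauss-sum analysis of \cite{Xia} (correctly, since the $x\mapsto -x$ pairing behind Theorem~\ref{combined} degenerates in characteristic two), and (v), (vi) rest on ``one checks'' against cyclotomic-number tables which, for the case $f$ odd that your own part (ii) forces, are not even reproduced in the paper's appendix for $e=6$ (Theorem~\ref{thm:e=6} covers only $f$ even). Since the paper merely cites these facts, this is not a defect relative to the paper, but the proposal should not be mistaken for a complete proof of those parts. Second, in (iv) the condition ``$t$ odd'' does not fall out of equating the $(i,0)_4$: that equation (via Theorem~\ref{KaRaThm} with $f$ odd) yields only $s=1$, i.e.\ $q=1+4t^2$, and the parity of $t$ is equivalent to $f$ being odd, which must be imported from part (ii); the analogous bookkeeping in (vi) is more delicate still, since equating the $(i,0)_8$ in the normalisation $q=x^2+4y^2=a^2+2b^2$ of Theorem~\ref{thm:e=8} gives $x=-3$, $a=1$, which must then be reconciled with the differently scaled representations $q=9+64y^2=1+8b^2$ in the statement. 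You flag this yourself, but it is the one step where a sign or scaling slip would silently produce a wrong criterion.
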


For partial difference sets, recall that by Corollary \ref{combined_cor}, for $q=ef+1$ with $e$ even, $C_0^e$ is not a proper $(q,f,\lambda,\mu)$-PDS if $f$ is odd.  The $e=2$ case is well-known (\cite{Sto}):
\begin{proposition}\label{prop1}
Let $q=2f+1$ be an odd prime power.
Then $C_0^2$ forms a $(q, \frac{q-1}{2}, \frac{q-5}{4}, \frac{q-1}{4})$-PDS if $q \equiv 1 \mod 4$ (and a $(q, \frac{q-1}{2}, \frac{q-3}{4})$-DS if $q \equiv 3 \mod 4$) in $GF(q)$.
\end{proposition}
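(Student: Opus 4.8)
The plan is to reduce the statement to the classical cyclotomic numbers of order $2$ using the internal-difference machinery of Section 3, and then treat the two congruence classes of $q$ separately. First I would note that $C_0^2 = \langle \alpha^2 \rangle$ is precisely the set of nonzero squares of $GF(q)$, of cardinality $f = (q-1)/2$. Applying Lemma \ref{lem1CD}(ii) with $e=2$ gives
$$\Delta(C_0^2) = (0,0)_2\,(C_0^2) + (1,0)_2\,(C_1^2),$$
so by Lemma \ref{PDS/DPDFConditions}(i) the set $C_0^2$ is automatically a $(q,\tfrac{q-1}{2},A,B)$-PDS with $A = (0,0)_2$ and $B = (1,0)_2$ (the hypothesis of that lemma is vacuous when $e=2$, since there is only the single index $i=1$). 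Everything therefore reduces to evaluating the two cyclotomic numbers $(0,0)_2$ and $(1,0)_2$.

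For $q \equiv 3 \pmod 4$ I would argue cleanly from the structural results. Theorem \ref{DSresults}(iii) already guarantees that $C_0^2$ is a difference set, i.e.\ $A=B$, and the common value $\lambda$ is then forced by the elementary difference-set count $\lambda(q-1)=k(k-1)$ with $k=(q-1)/2$, yielding $\lambda = \tfrac{q-3}{4}$. Alternatively, one can avoid citing Theorem \ref{DSresults} entirely: here $f$ is odd and $-1 \in C_1^2$ by Lemma \ref{lem4CD}, so by Proposition \ref{lem5CD} the transversals satisfy $T_{f-r} = -T_r$ and each pair $\{T_r,T_{f-r}\}$ lands in opposite classes. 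With no central transversal, the $f-1$ transversals split into $(f-1)/2$ such pairs, whence $A=B=(f-1)/2 = (q-3)/4$ directly.

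For $q \equiv 1 \pmod 4$ the same structural machinery yields only partial information: Corollary \ref{combined_cor}(a)(i) shows the PDS must be proper (so $A\neq B$), and the pairing argument together with the central transversal $T_{f/2}$ analysed in Lemma \ref{central} forces $A-B$ to be odd while $A+B = f-1 = (q-3)/2$. Pinning $A$ and $B$ down to the exact values $(q-5)/4$ and $(q-1)/4$ requires the precise order-$2$ cyclotomic numbers $(0,0)_2 = (q-5)/4$ and $(1,0)_2 = (q-1)/4$, which I would quote from \cite{Sto}; substituting these completes the statement. This last input is the main obstacle, in that it cannot be extracted from the pairing/parity arguments alone (which only determine $A+B$ and the parity of $A-B$): it is equivalent to the classical evaluation of the Paley strongly regular graph parameters and ultimately rests on a Gauss/Jacobsthal-sum computation. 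Since the proposition is flagged as well known, I expect the intended proof to simply invoke these standard values rather than rederive them.
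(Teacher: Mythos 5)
Your proposal is correct, but note that the paper does not actually prove Proposition \ref{prop1}: it is stated as a known fact with a citation to \cite{Sto}, so any comparison is with that citation rather than with an argument in the text. What you supply is a legitimate fleshing-out. The reduction via Lemma \ref{lem1CD}(ii) and Lemma \ref{PDS/DPDFConditions}(i) (whose hypothesis is indeed vacuous for $e=2$) correctly identifies the PDS parameters as $(0,0)_2$ and $(1,0)_2$, and your self-contained treatment of $q\equiv 3 \bmod 4$ via the pairing $T_{f-r}=-T_r$ with $-1\in C_1^2$ is sound and is actually more informative than the paper's bare citation. One quibble on your closing remark: for $e=2$ the exact values $(0,0)_2=\frac{q-5}{4}$ and $(1,0)_2=\frac{q-1}{4}$ do \emph{not} require a Gauss or Jacobsthal sum computation. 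They follow from the elementary cyclotomic-number relations: since $f$ is even and $-1\in C_0^2$, one has $(1,1)_2=(1,0)_2$ and the row sums $(1,0)_2+(1,1)_2=f$ and $(0,0)_2+(0,1)_2=f-1$ with $(0,1)_2=(1,0)_2$, giving $(1,0)_2=f/2=\frac{q-1}{4}$ and $(0,0)_2=\frac{f}{2}-1=\frac{q-5}{4}$ by pure counting. So the "main obstacle" you identify can be removed, making the whole proposition elementary; citing \cite{Sto} as you and the authors both do is of course also acceptable.
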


We establish a comparable result for other small values of $e$. 

\begin{theorem}\label{PDSparams}
Let $GF(q)$ be a finite field, where $q=ef+1$ is a prime power.
\begin{itemize}
\item[(i)] If $q = 3f + 1$, such that $4q = c^2 + 27d^2$ is the proper representation of $q$ with $c \equiv 1 \mod 3$, then $C_0^3$ is a PDS if and only if $d=0$.\\  
Here the parameters are $(q,\frac{q-1}{3},\frac{q-8+c}{9},\frac{2q-4-c}{18})$ and $C_0^3$ is proper if non-trivial. This holds precisely when $q = p^m$, such that $p \equiv 2 \mod 3$ and $m$ is even.  
\item[(ii)] If $q = 4f+1$, such that $q = s^2+4t^2$ is the proper representation of $q$ with $s \equiv 1 \mod 4$, then $C_0^4$ is a PDS if and only if $t=0$. \\
 Here the parameters are $(q,\frac{q-1}{4},\frac{q-11-6s}{16},\frac{q-3+2s}{16})$. This holds precisely when $q = p^m$, where $p \equiv 3 \mod 4$ and $m$ is even.
\item[(iii)] If $q = 6f + 1$ such that $q = s^2 + 3t^2$ is the proper representation of $q$ with $s \equiv 1 \mod 3$, then $C_0^6$ is a PDS if and only if $t=0$.\\
Here the parameters are $(q,\frac{q-1}{6},\frac{q-17-20s}{36},\frac{q-5+4s}{36})$. This holds precisely when $q = p^m$ where $p \equiv 5 \mod 6$ and $m$ is even.
\item[(iv)] If $q = 8f+1$ such that $q = x^2 + 4y^2 = a^2 + 2b^2$ are proper representations of $q$ with $x \equiv a \equiv 1 \mod 4$, then $C_0^8$ is a PDS if and only if $x=a$ and $y=b=0$.\\
Here the parameters are $(q,\frac{q-1}{8},\frac{q-23-42x}{64},\frac{q-7+6x}{64})$. This holds precisely when $q=p^m$, where $p \equiv 7 \mod 8$ and $m$ is even.
\end{itemize}
\end{theorem}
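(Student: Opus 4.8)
The plan is to prove each of the four parts by reducing the question ``is $C_0^e$ a PDS?'' to a purely cyclotomic-number condition, and then invoking the known evaluations of cyclotomic numbers of order $e=3,4,6,8$. By Lemma \ref{PDS/DPDFConditions}(i), $C_0^e$ is a $(q,\frac{q-1}{e},A,B)$-PDS precisely when the cyclotomic numbers satisfy $(i,0)_e = B$ for all $1 \leq i \leq e-1$ (a single common value) and $(0,0)_e = A$. So the entire problem becomes: for each $e$, determine when the $e-1$ numbers $(1,0)_e, (2,0)_e, \ldots, (e-1,0)_e$ all coincide. First I would write down, for each $e$, the standard closed-form expressions for $(i,0)_e$ in terms of the proper representation of $q$ by the relevant quadratic form (e.g. $4q=c^2+27d^2$ for $e=3$, $q=s^2+4t^2$ for $e=4$, etc.), taking these from \cite{Sto} (and \cite{BeEvWi} where needed for $e=8$). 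These formulas involve the integer parameters $c,d$ (resp. $s,t$; $s,t$; $x,y,a,b$) with the stated normalizations.

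Next, for each $e$ I would impose the equality constraints $(1,0)_e = (2,0)_e = \cdots = (e-1,0)_e$ and simplify. The key phenomenon to exploit is that the dependence of the cyclotomic numbers on the ``secondary'' parameter of the quadratic form (namely $d$, $t$, $t$, and $(y,b)$ respectively) is exactly what distinguishes the classes $C_i^e$ from one another; when that parameter vanishes, the formulas collapse so that all the $(i,0)_e$ with $i \neq 0$ become equal. Concretely, I expect that after substitution the difference $(i,0)_e - (j,0)_e$ is a nonzero multiple of $d$ (respectively $t$, $t$, or a combination of $y$ and $b$), forcing the common-value condition to be equivalent to $d=0$ (resp.\ $t=0$, $t=0$, and $y=b=0$ with $x=a$). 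This gives the ``if and only if'' in each part. Once the vanishing condition is established, I would substitute the resulting simplified representation back to read off $A=(0,0)_e$ and $B=(i,0)_e$, producing the stated parameter tuples; this is a routine algebraic simplification using $q = ef+1$ to re-express everything in terms of $q$ and the primary parameter $c$ (resp.\ $s$, $s$, $x$).

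For the final ``this holds precisely when $q=p^m$ with $p \equiv \cdots$ and $m$ even'' clauses, I would argue as follows. The vanishing of the secondary parameter (say $d=0$, giving $4q=c^2$) forces $q$ to be a perfect square, hence $m$ even; more precisely the condition says $q$ is represented by the principal form with the off-diagonal part zero, which happens exactly when the relevant class number / genus theory condition is met, and this translates into the congruence condition on $p$. The cleanest route is to observe that $C_0^e$ being a PDS with all $(i,0)_e$ equal is tantamount to the cyclotomic numbers being (in the relevant sense) symmetric under multiplication by $\alpha$, which by the uniform-cyclotomy criterion of Theorem \ref{MillsThm} and Lemma \ref{Equiv_Baumert} is equivalent to $-1$ being a power of $p$ modulo $e$; unpacking ``$-1 \equiv p^s \pmod{e}$'' for $e=3,4,6,8$ gives exactly $p \equiv 2,3,5,7 \pmod{e}$ respectively (together with $m$ even). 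I would verify the $p$-congruence directly for each small $e$ rather than appealing to the general machinery, since for these specific moduli it is a one-line check: e.g.\ $-1$ is a power of $p$ mod $3$ iff $p \equiv 2 \pmod 3$.

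The main obstacle I anticipate is the $e=8$ case, part (iv). Here two simultaneous quadratic representations $q=x^2+4y^2=a^2+2b^2$ are involved, the cyclotomic numbers of order $8$ split into two families (depending on whether $2$ is a fourth power, i.e.\ on $f$ even/odd and the residue of $q$ mod $16$), and the closed forms from \cite{BeEvWi} are considerably more intricate than for $e\le 6$. Disentangling the equality constraints among the seven numbers $(1,0)_8,\ldots,(7,0)_8$ to show they force simultaneously $y=b=0$ \emph{and} $x=a$ (rather than merely one of these) will require careful bookkeeping, and I would need to confirm that no spurious solution with the secondary parameters nonzero accidentally equalizes all seven values. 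By contrast, the $e=3,4,6$ cases should be straightforward linear-in-the-secondary-parameter eliminations, and the parameter read-off in every part is purely mechanical once the vanishing condition is in hand.
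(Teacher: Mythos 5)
Your overall strategy is the same as the paper's: reduce ``$C_0^e$ is a PDS'' to the single condition $(1,0)_e=(2,0)_e=\cdots=(e-1,0)_e$ via Lemma \ref{PDS/DPDFConditions}(i), feed in the closed-form evaluations of the order-$e$ cyclotomic numbers, and observe that the pairwise differences are multiples of the secondary parameter. The one real divergence is in the reverse direction: you read ``$d=0\Rightarrow$ all $(i,0)_e$ coincide'' straight off the formulas, whereas the paper deduces $p\equiv -1 \bmod e$ with $m$ even from the vanishing condition and then invokes Lemma \ref{Equiv_Baumert} and Theorem \ref{ParamThm2} (uniform cyclotomy); both routes work, and yours is arguably more self-contained. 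The obstacle you flag for $e=8$ (and which also arises, unmentioned in your proposal, for $e=6$, whose table splits into three cases according to the class of $2$) is exactly where the paper deploys a tool you are missing: Corollary \ref{combined_cor}(a) shows that if $C_0^{\epsilon}$ is a proper PDS with $\epsilon>2$ then $2\in C_0^{\epsilon}$, which immediately selects the single relevant row of Theorems \ref{thm:e=6} and \ref{thm:e=8} (for $e=8$ it forces $2$ to be a quartic residue and $f$ even) and spares you from checking that no spurious solution equalises the seven numbers in the other cases. Your fallback of grinding through every case does still work --- e.g.\ for $e=6$ with $2\in C_1$ the constraints force $s=0$, contradicting $s\equiv 1\bmod 3$ --- but it is the messier path. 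Also, your worry about establishing $x=a$ separately is unnecessary: once $y=b=0$ you have $x^2=a^2=q$ with $x\equiv a\equiv 1\bmod 4$, so $x=a$ automatically. One small caution for part (ii): in the $f$-odd sub-case the elimination yields $s=1$ (the difference-set family $q=1+4t^2$ of Theorem \ref{DSresults}(iv)) rather than $t=0$, so the ``only if'' as you state it holds only for proper PDSs; the paper implicitly restricts to that reading (via Corollary \ref{combined_cor}(b), which rules out proper PDSs when $f$ is odd), and you should make the same restriction explicit.
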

\begin{proof}
\begin{itemize}
\item[(i)] For the forward direction, suppose that $C_0^3$ is a $(q, \frac{q-1}{3}, A, B)$-PDS.  So $A = (0,0)_3$ and $B = (1,0)_3=(2,0)_3$.  By Theorem \ref{thm:e=3}, the latter equality happens when 
\begin{center}
$(1,0) = \frac{2q-4-c-9d}{18} = \frac{2q-4-c+9d}{18} = (2,0)$
\end{center} i.e. precisely when $d=0$.   For the reverse direction: by Theorem \ref{thm:e=3}, since $d=0$, we have $p \equiv 2 \mod 3$. This means that $-1 \equiv p \mod e$, and so by Lemma \ref{Equiv_Baumert} and Theorem \ref{ParamThm2}, this means that $C_0^3$ is a PDS.  For the parameters, apply Theorem 7.1 with  $d=0$ to see that $C_0^3$ is $(q,\frac{q-1}{3},\frac{q-8+c}{9},\frac{2q-4-c}{18})$-PDS. Notice that the only value of $c \equiv 1 \mod 3$ for which $A=\frac{q-8+c}{9}=\frac{2q-4-c}{18}=B$ is $c=4$, which is the trivial case $q=4$. 
\item[(ii)] Similar to (i); this result was stated without proof in \cite{Ma}.
\item[(iii)] Similar to (i).  By Corollary \ref{combined_cor}, when $C_0^6$ is a PDS we must have $2 \in C_0^6$. For the forward direction, the appropriate case of Theorem \ref{thm:e=6} may then be applied and the equation solved to see $t=0$.  For the reverse direction, by Theorem \ref{thm:e=6},  we have $p \equiv 5 \mod 6$ when $t=0$ and so Lemma \ref{Equiv_Baumert} and Theorem \ref{ParamThm2} apply to show $C_0^6$ is a PDS.  For the parameters, we again use Theorem \ref{thm:e=6}.
\item[(iv)] Similar to (i).  As in (iii), we apply Corollary \ref{combined_cor} to see that $2 \in C_0^8$, which determines the appropriate case in Theorem \ref{thm:e=8}.  For the reverse direction, observe that by Theorem \ref{thm:e=8}, when $y=0$, $p \equiv 3 \mod 4$, and when $b=0$, $p \equiv 5 \mod 8$ or $p \equiv 7 \mod 8$. To satisfy both of these equations simultaneously (as $y=b=0$), $p \equiv 7 \mod 8$, meaning $-1 \equiv p \mod e$.   The rest of the proof then follows as above.
\end{itemize}
\end{proof}

We end this section with two results (not derived from cyclotomic numbers), which give a characterization of $C_0^e$ in terms of the relationship between $e$ and $f$.

\begin{proposition}\label{C0_PDS_prop}
\begin{itemize}
    \item[(i)] Let $q=p^{\beta}$, $\beta >1$.  For $r|\beta$ ($r \neq \beta$), let $F_r$ be the subfield $GF(p^r)$ of $GF(q)$; then $F_r \setminus \{0\}$ is the cyclotomic class $C_0^e$ of $GF(q)$ where $e=\frac{q-1}{p^r-1}$.
    \item[(ii)]  Let $q=ef+1$.  Then $C_0^e$ is a $(q,f,f-1,0)$-PDS if and only if $C_0^e \cup \{0\}$ is a subfield of $GF(q)$.  Here, $q$ is a prime power which is not a prime, $e=\frac{q-1}{p^r-1}$ and $f=p^r-1$.
\end{itemize}
\end{proposition}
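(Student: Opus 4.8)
The plan is to treat the two parts separately, with part (i) supplying the structural identification that is then used in part (ii).

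For part (i), I would first recall that $GF(q)^*$ is cyclic of order $q-1 = p^{\beta}-1$, generated by $\alpha$, and that $C_0^e = \langle \alpha^e \rangle$ is precisely the unique subgroup of $GF(q)^*$ of order $(q-1)/e$. Since $r \mid \beta$ forces $(p^r-1) \mid (p^{\beta}-1)$, the quantity $e = (q-1)/(p^r-1)$ is a genuine divisor of $q-1$, and $C_0^e$ then has order $p^r-1$. On the other hand $F_r \setminus \{0\} = GF(p^r)^*$ is a subgroup of $GF(q)^*$ of the same order $p^r-1$. A cyclic group has a unique subgroup of each order dividing the group order, so the two subgroups coincide, i.e.\ $F_r \setminus \{0\} = C_0^e$. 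This step is essentially immediate once the orders are matched.

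For the forward direction of part (ii), suppose $C_0^e$ is a $(q,f,f-1,0)$-PDS. Since $f>1$ by the standing convention, the parameter $\lambda = f-1$ is nonzero, so Theorem \ref{thm5}(v) applies and shows that $C_0^e \cup \{0\}$ is an additive subgroup of $(GF(q),+)$. I would then observe that $C_0^e = \langle \alpha^e \rangle$ is closed under multiplication and inversion and contains $1 = \alpha^0$; hence $C_0^e \cup \{0\}$ is simultaneously closed under addition, subtraction, multiplication, and (for nonzero elements) inversion, and contains both $0$ and $1$. It is therefore a finite subring of $GF(q)$ containing $1$, and since a finite integral domain is a field, $C_0^e \cup \{0\}$ is a subfield.

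For the reverse direction, suppose $C_0^e \cup \{0\}$ is a subfield, necessarily of order $f+1 = p^r$ for some $r \mid \beta$, so that $f = p^r-1$ and $e = (q-1)/(p^r-1)$, matching part (i); the requirement $e>1$ forces $r<\beta$, so $q$ cannot be prime. Writing $H = C_0^e \cup \{0\}$, an additive subgroup of order $f+1$, I would compute $\Delta(C_0^e)$ directly: each nonzero $h \in H$ arises as a difference $h_1 - h_2$ with $h_1,h_2 \in H$ in exactly $f+1$ ways, of which precisely two involve $0$ (namely the pairs $(h,0)$ and $(0,-h)$, with $-h \in C_0^e$ by additive closure), so $h$ occurs $(f+1)-2 = f-1$ times as a difference of two elements of $C_0^e$. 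Since $C_0^e \subseteq H$ and $H$ is additively closed, no element of $G^* \setminus H = G^* \setminus C_0^e$ occurs at all, giving $\Delta(C_0^e) = (f-1)(C_0^e) + 0\cdot(G^* \setminus C_0^e)$, the claimed $(q,f,f-1,0)$-PDS. (Equivalently, one may apply Theorem \ref{thm5}(iv) to $H$ and then correct the parameters by excising $0$.) The one point requiring genuine care is exactly this bookkeeping in passing from the differences on the subgroup $H$ to those on $C_0^e = H \setminus \{0\}$: the shift from frequency $f+1$ to $f-1$ comes precisely from discarding the two ordered pairs involving $0$. Everything else follows from the cyclic-subgroup uniqueness established in part (i) together with the PDS/subgroup correspondence recorded in Theorem \ref{thm5}.
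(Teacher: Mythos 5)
Your proof is correct and follows essentially the same route as the paper: part (i) via uniqueness of subgroups of given order in the cyclic group $GF(q)^*$, and part (ii) via Theorem \ref{thm5}(v) for the forward direction and the subgroup-to-PDS correspondence (with the $f+1 \to f-1$ parameter adjustment on removing $0$) for the converse. Your explicit count of the two ordered pairs involving $0$ just makes precise the bookkeeping the paper leaves implicit when it says ``removal of $\{0\}$ yields a $(q,f,f-1,0)$-PDS.''
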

\begin{proof}
(i) Since $F_r$ is a subfield of order $p^r$, $F_r\backslash\{0\}$ is a multiplicative subgroup of GF$(q)^*$ with cardinality $p^r-1$. Each subgroup of the cyclic group \rm{GF}$(q)^*$ of given order is unique up to isomorphism. Hence $F_r\backslash\{0\} = C_0^e$, where $e = \frac{q-1}{p^r-1}$. \\
(ii) Suppose $C_0^e$ is a  $(q,f,f-1,0)$-PDS.  By Theorem \ref{thm5}, $C_0^e \cup \{0\}$ is an additive subgroup of $GF(q)$ and by definition $C_0^e$ is a multiplicative sugroup of $GF(q)^*$, so $C_0^e \cup \{0\}$ is a subfield.  Conversely, suppose $C_0^e \cup \{0\}$ is a subfield and hence an additive subgroup of $GF(q)$; then by Theorem \ref{thm5}, it is a $(q,f+1,f+1,0)$-PDS.  By subfield properties, $C_0^e \cup \{0\}$ and $C_0^e$ are closed under the taking of negatives.  Hence removal of $\{0\}$ yields a $(q,f,f-1,0)$-PDS. The parameters $e$ and $f$ are as in part (i). 
\end{proof}

 In fact the subfield situation is the only possibility unless $e<f$.
 \begin{theorem}\label{eandf}
 Let $q=p^{\beta}=ef+1$.  Suppose $C_0^e$ is a proper $(q,f,\lambda,\mu)$-PDS.  Then
 \begin{itemize}
  \item[(i)] if $e>f$, $C_0^e \cup \{0\}$ is a subfield of $GF(q)$;
     \item[(ii)] if $e<f$ then $C_0^e \cup \{0\}$ is not a subfield of $GF(q)$ and $\mu \geq 1$;
     \item[(iii)] the case $e=f$ cannot occur for $f>2$.
 \end{itemize}
 \end{theorem}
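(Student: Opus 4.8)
The three parts share a common engine, which I would set up first. Counting the $f(f-1)$ ordered internal differences of the $f$-set $C_0^e$ against the right-hand side $\lambda(C_0^e)+\mu(\mathrm{GF}(q)^*\setminus C_0^e)$ of the PDS equation, and using $q-1-f=f(e-1)$, gives after dividing by $f$ the parameter relation $\lambda+\mu(e-1)=f-1$, which I will call $(\star)$. Next I would isolate the exact role of the subfield condition. By Proposition \ref{C0_PDS_prop}(ii), $C_0^e\cup\{0\}$ is a subfield precisely when $C_0^e$ is a $(q,f,f-1,0)$-PDS, and by Proposition \ref{C0_PDS_prop}(i) such a subfield $\mathrm{GF}(p^r)$ forces $f=p^r-1$ and $e=(q-1)/(p^r-1)=1+p^r+\cdots+p^{\beta-r}\geq 1+p^r>f$ (a proper subfield has $r<\beta$, hence $\beta\geq 2r$, so the geometric sum has at least two terms). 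Thus \emph{whenever $C_0^e\cup\{0\}$ is a subfield we necessarily have $e>f$}; this single implication drives all three parts.

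For part (i), assume $e>f$, so $e-1\geq f$. Then $(\star)$ gives $\mu(e-1)\leq f-1<e-1$, forcing the non-negative integer $\mu$ to equal $0$; hence $\lambda=f-1\neq 0$ (as $f>1$), and Theorem \ref{thm5}(v) shows $C_0^e\cup\{0\}$ is an additive subgroup, which being multiplicatively closed is a subfield. For part (ii), assume $e<f$. The subfield bound just established shows that a subfield would force $e>f$, so $C_0^e\cup\{0\}$ cannot be a subfield; moreover if $\mu=0$ then $(\star)$ gives $\lambda=f-1\neq 0$, whence Theorem \ref{thm5}(v) would again make $C_0^e\cup\{0\}$ a subfield and force $e>f$, a contradiction. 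Therefore $\mu\geq 1$.

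Part (iii) is where the real work lies. Assume $e=f$ with $f>2$, so $q=f^2+1$. The first step is to pin down the parity of $f$: if $q$ were even then $q=2^\beta=f^2+1$ with $f$ odd would give $f^2+1\equiv 2\pmod 8$, forcing $\beta=1$ and $f=1$, contrary to $f>2$; hence $q$ is odd and $f$ is even. Consequently $q-1=f^2\equiv 0\pmod{2f}$, i.e. $q\equiv 1\pmod{2\epsilon}$ with $\epsilon=e=f>2$, and since $C_0^e$ is a proper PDS we have $\phi_1=\cdots=\phi_{e-1}=\mu$. I would then invoke Theorem \ref{combined}(a)(ii), which gives $\mu=\phi_1=2\psi_1$, so $\mu$ is even. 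But $(\star)$ with $e=f$ reads $\lambda=(f-1)(1-\mu)$, and non-negativity of $\lambda$ forces $\mu\in\{0,1\}$; the only even possibility is $\mu=0$, whence $\lambda=f-1\neq 0$ and, by Theorem \ref{thm5}(v), $C_0^e\cup\{0\}$ is a subfield. By the subfield bound this requires $e>f$, contradicting $e=f$.

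The crux is part (iii), and specifically the exclusion of the borderline parameter set $\mu=1,\lambda=0$ (a hypothetical $(q,f,0,1)$-PDS). The clean resolution I expect to use leverages the parity machinery already developed in Theorem \ref{combined}, but this is only available after establishing that $f$ is even, which is exactly why the small Diophantine observation $q=f^2+1\neq 2^\beta$ (for $f>2$) must be carried out first. Without Theorem \ref{combined} one would instead have to argue via the restricted character values $\tfrac{-1\pm\sqrt{4f-3}}{2}$ of the putative regular PDS: equal multiplicities when $4f-3$ is a non-square give $f=2$, while $4f-3=s^2$ together with $q=f^2+1$ forces $q\equiv 2\pmod 4$, again incompatible with $q$ being a prime power exceeding $2$. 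I would present the Theorem \ref{combined} route as the main line, and note that $f=2$ genuinely occurs (for instance $q=5$, where $C_0^2=\{1,4\}$ is a proper $(5,2,0,1)$-PDS), so that the restriction $f>2$ is sharp.
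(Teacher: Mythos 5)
Your proposal is correct. Parts (i) and (ii) coincide with the paper's argument: both rest on the counting identity $\lambda+\mu(e-1)=f-1$, the forced degeneration $\mu=0$, $\lambda=f-1$ when $e>f$, Theorem \ref{thm5}(v)/Proposition \ref{C0_PDS_prop} to recognise the subfield, and the estimate $e=\frac{p^{\beta}-1}{p^r-1}>p^r-1=f$ to show a subfield forces $e>f$. The divergence is in part (iii), where both proofs reduce to excluding a hypothetical $(q,f,0,1)$-PDS. The paper argues directly on the multiset $\Delta(C_0^e)=\bigcup_{r=1}^{f-1}T_r$: since $-1\in C_0^e$ (via Corollary \ref{combined_cor} and Lemma \ref{lem4CD}), $T_r=T_{f-r}$, so some cyclotomic class occurs at least twice, contradicting multiplicity one. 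You instead first establish that $q$ is odd and $f$ is even (via the observation that $f^2+1=2^{\beta}$ forces $f=1$), which puts you in the hypotheses of Theorem \ref{combined}(a)(ii) and yields that $\mu=\phi_1=2\psi_1$ is even, eliminating $\mu=1$ by parity and collapsing the case to $\mu=0$. The two routes rest on the same underlying pairing $T_r=T_{f-r}$ (that pairing is what proves Theorem \ref{combined}), so neither is deeper than the other; the paper's version is marginally more self-contained and does not need the parity of $f$, while yours is arguably cleaner in that it funnels both subcases $\mu=0$ and $\mu=1$ into the single subfield contradiction, at the modest cost of the preliminary Diophantine step and of checking the side conditions ($q\equiv 1\bmod 2\epsilon$, $\epsilon>2$, $f$ even) needed to invoke Theorem \ref{combined}(a)(ii) --- all of which you do correctly. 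Your closing remark that $q=5$, $C_0^2=\{1,4\}$ realises the excluded parameters when $f=2$ matches the paper's own note that the restriction $f>2$ is necessary.
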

 \begin{proof}
As $C_0^e$ is a $(q,f,\lambda,\mu)$-PDS then the following multiset equation must hold;
    \begin{equation*}
        \Delta(C_0^e) = \lambda(C_0^e) + \mu(G^*\backslash{C_0^e}).
    \end{equation*}
The set $C_0^e$ has cardinality $f$, so $|\Delta(C_0^e)|=f(f-1)$ and $|G^*\setminus{C_0^e}| = q-1-f=f(e-1)$. We can therefore write; 
    \begin{equation*}
        f(f-1) = \lambda(f) + \mu(f(e-1))
    \end{equation*}
and so
    \begin{equation}\label{multi}
        f-1 = \lambda + \mu(e-1).
    \end{equation}
(i) When $e>f$, since $\lambda,\mu$ are non-negative integers, we must have $\mu=0$ and $\lambda=f-1$ in the above equation.  By Proposition \ref{C0_PDS_prop}, the result follows.\\
(ii)If $C_0^e \cup \{0\}$ is a subfield then $|C_0^e|=p^r-1=f$ and $e=\frac{p^{\beta}-1}{p^r-1}$ for some $r \mid \beta$; say $\beta=kr$ ($k>1$). Then  $e=\frac{p^{\beta}-1}{p^r-1}=(p^r)^{k-1}+ (p^r)^{k-2}+ \cdots + (p^r)+1 >p^r-1=f$. The result follows.\\
(iii) Let $e=f>2$. From Equation \ref{multi} above, we have  $f-1 = \lambda + \mu(f-1)$, so either $\mu =0$ or $\mu = 1$.  When $\mu = 0$, then $\lambda = f-1$, and $C_0^e$ is a $(q,f,f-1,0)$-PDS, i.e  a subfield $GF(p^r)$ with 0 removed.  The parameters are $e=\frac{p^{\beta}-1}{p^r-1} > f=p^r-1$, contradicting $e=f$.  So this case cannot occur.  Otherwise $\mu = 1$ and $\lambda = 0$.  The set $C_0^e$ will only form a $(q,f,0,1)$-PDS if the multiset $\Delta(C_0^e)$ comprises of precisely 1 copy of each cyclotomic class $C_{i}^e$, $1 \leq i \leq e-1$ (and no copies of $C_0^e$). By Lemma \ref{lem1CD}, the multiset $\Delta(C_0^e) = \bigcup_{r=1}^{f-1}T_r$, where for each $1 \leq r \leq f-1$, $T_r = C_{a_r}^e$. As $C_0^e$ is a proper PDS, by Corollary \ref{combined_cor} $q \equiv 1 \mod 2e$ and by Lemma \ref{lem4CD} (a)(i) and (b), $-1 \in C_0^e$, hence $T_r=C_{a_r}^e=T_{f-r}$, so for any $r \neq \frac{f}{2}$, there are at least two copies of the cyclotomic class $C_{a_r}^e$ in the multiset $\Delta(C_0^e)$, a contradiction.  We note that for $e=f=2$, i.e. $q=5$, the class $C_0^2$ does form a $(5,2,0,1)$-PDS.
 \end{proof}
 
Note that if $q=ef+1$ is prime and $e>f$, then $C_0^e$ cannot be a proper PDS, as any such PDS must be a subfield of $GF(q)$ with $0$ removed, but $GF(q)$ has no proper subfields when $q$ is prime.

\subsection{Forming DPDFs and EPDFs from cyclotomic PDSs}

In this subsection, we will exhibit constructions of DPDFs and EPDFs by directly taking the sets to be cyclotomic classes, or unions of classes, which themselves form PDSs.

The following is immediate from Proposition \ref{C0_PDS_prop}, Theorem \ref{thm6a} and Theorem \ref{partition_SEDF}.
\begin{proposition}
Let $GF(q)$ be a finite field of order $q=p^{\beta}$ where $\beta>1$.  For $r|\beta$, let $C_0^e$ be the cyclotomic class of order $e$ where $e=\frac{q-1}{p^r-1}$.   Let $\mathcal{S}^{\prime}$ be any collection of $u$ sets ($2 \leq u \leq e-1$) from amongst the cyclotomic classes $\{C_0^e, \ldots, C_{e-1}^e\}$.
\begin{itemize}
    \item[(i)] $\mathcal{S}^{\prime}$ will form a $(q,u,p^r-1, p^r-2,0)$-DPDF.
    \item[(ii)]  If $u=e-1$, $\mathcal{S}^{\prime}$ will also form an $(q,u,p^r-1,q-3p^r+2,q-p^r)$-EPDF.  
    \end{itemize}
\end{proposition}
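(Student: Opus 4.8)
The plan is to reduce the whole statement to the two cited structural theorems, after first recognising each cyclotomic class as a partial difference set. By Proposition \ref{C0_PDS_prop}(ii), since $e=\frac{q-1}{p^r-1}$, the class $C_0^e$ equals $F_r \setminus \{0\}$ and is a $(q, p^r-1, p^r-2, 0)$-PDS. Each remaining class is $C_i^e = \alpha^i C_0^e$, and by Lemma \ref{lem1CD}(iii) we have $\Delta(C_i^e) = \alpha^i \Delta(C_0^e)$; equivalently, multiplication by the nonzero element $\alpha^i$ is an automorphism of $(GF(q),+)$ which permutes $G^*$ and carries the defining multiset equation of $C_0^e$ to that of $C_i^e$. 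Hence every $C_i^e$ is also a $(q, p^r-1, p^r-2, 0)$-PDS. The classes in $\mathcal{S}^{\prime}$ are pairwise disjoint, so part (i) is immediate from Theorem \ref{thm6a}(i) with $\lambda = p^r-2$, $\mu = 0$, $k = p^r-1$ and $m = u$: the internal frequencies are $\lambda + (u-1)\mu = p^r-2$ on $S$ and $u\mu = 0$ off $S$.

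For part (ii), the key observation is that when $u = e-1$ the set $G^* \setminus S$ is a single cyclotomic class, say $C_t^e$, which (for $p^r>2$) is a proper PDS. Theorem \ref{partition_SEDF}(iii) then guarantees that $\mathcal{S}^{\prime}$ is simultaneously a DPDF and an EPDF, so only the EPDF frequencies remain to be computed. To obtain them I would identify $S$ itself as a PDS and invoke Theorem \ref{thm6a}(ii) (equivalently Theorem \ref{thm4}). Since $C_t^e = \alpha^t(F_r \setminus \{0\})$ and $\alpha^t F_r$ is the image of the subfield $F_r$ under the additive automorphism $x \mapsto \alpha^t x$, the set $H := C_t^e \cup \{0\} = \alpha^t F_r$ is an additive subgroup of order $p^r$ (consistent with Theorem \ref{thm5}(v)), whence $S = G \setminus H$.

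It then remains to compute the PDS parameters of the complement $S = G \setminus H$ of an order-$p^r$ subgroup. A short counting argument does this: for fixed nonzero $d$, count the $y$ with $y \notin H$ and $y+d \notin H$ by complementary counting over all of $G$, splitting on whether $d \in H$. One finds that each element of $S$ occurs $q - 2p^r$ times in $\Delta(S)$ and each element of $C_t^e = G^* \setminus S$ occurs $q - p^r$ times, so $S$ is a $(q, q-p^r, q-2p^r, q-p^r)$-PDS (note $(e-1)(p^r-1)=q-p^r$, matching $mk$). Feeding $\sigma = q-2p^r$ and $\chi = q-p^r$ into Theorem \ref{thm6a}(ii), together with $\lambda + (u-1)\mu = p^r-2$ and $u\mu = 0$ from part (i), yields the external frequencies $\sigma - (p^r-2) = q - 3p^r + 2$ on $S$ and $\chi - 0 = q - p^r$ off $S$, which is exactly the claimed $(q, u, p^r-1, q-3p^r+2, q-p^r)$-EPDF.

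The only genuine obstacle is the complement computation in the last paragraph: everything else is a direct appeal to the cited theorems and the automorphism-invariance of the PDS property, but to pin down the EPDF frequencies one must correctly recognise $G^* \setminus S$ as a translate of a subfield-minus-zero and count differences in its complement. I would also flag the degenerate boundary case $p^r = 2$ (forcing $f=1$, singleton classes), where $C_t^e$ is not a proper PDS, so the EPDF claim should either be checked directly or excluded, rather than obtained through Theorem \ref{partition_SEDF}.
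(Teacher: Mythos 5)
Your proposal is correct and follows essentially the same route as the paper, which states the result as immediate from Proposition \ref{C0_PDS_prop}, Theorem \ref{thm6a} and Theorem \ref{partition_SEDF}; you have simply supplied the details the paper leaves implicit, in particular the counting argument showing that the complement of the order-$p^r$ subgroup is a $(q,q-p^r,q-2p^r,q-p^r)$-PDS, which is exactly what is needed to extract the stated EPDF frequencies from Theorem \ref{thm6a}(ii). Your caveat about the degenerate case $p^r=2$ is sensible, though your own route through Theorem \ref{thm6a}(ii) already covers it since the subgroup-complement computation does not require the complement to be a proper PDS.
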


In the rest of this section, we focus on constructions which simultaneously guarantee DPDFs and EPDFs.

\begin{theorem}
Let $\rm{GF}(q)$ be a finite field of order $q= ef + 1$, where $e \in \{3,4,6,8\}$. Further, let $I \subset \{0,1,\ldots,e-1\}$ (where $|I|=u, 2 \leq u \leq e-1$) and $\mathcal{D}^{\prime} = \{C_i^e\}_{i \in I}$.\\ 
If $C_0^e$ is a PDS (with parameters  $(q,\frac{q-1}{e},\eta^2-(e-3)\eta-1,\eta^2+\eta)$) then $\mathcal{D}^{\prime}$ is both a proper $(q,u,\frac{q-1}{e},u\eta^2+(u+2-e)\eta-1,u(\eta^2+\eta))$-DPDF and a proper  $(q,u,\frac{q-1}{e},u(u-1)\eta^2+2(u-1)\eta,u(u-1)\eta^2)$-EPDF (where $\eta = \frac{(-p)^m-1}{e}$).  
\end{theorem}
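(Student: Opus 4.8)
The plan is to reduce the entire statement to the uniform-cyclotomy machinery already developed, since the final theorem is essentially a packaging of Theorem \ref{ParamThm2} specialized to small $e$. First I would observe that the hypothesis ``$C_0^e$ is a PDS'' for $e \in \{3,4,6,8\}$ is exactly the situation characterized in Theorem \ref{PDSparams}: in each of those four cases, the characterization shows that $C_0^e$ being a PDS forces $q = p^m$ with $p \equiv -1 \pmod{e}$ and $m$ even. The crucial point to extract is that $p \equiv -1 \pmod e$ means $-1$ is a power of $p$ modulo $e$ (trivially, the first power), so by Lemma \ref{Equiv_Baumert} we are precisely in the regime where the cyclotomic numbers of order $e$ over $\mathrm{GF}(q)$ are uniform.

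Once uniformity is established, I would apply Theorem \ref{ParamThm2} with $q' = q$ and the appropriate $\beta$. Concretely, writing $q = p^m$ with $m$ even, set $\beta = m/2$ and let the ``base'' prime power in Theorem \ref{ParamThm2} be $p$ itself, so that $q' = p^m = (p)^{2\beta}$ and $e \mid p+1$ (which holds since $p \equiv -1 \pmod e$). Then Theorem \ref{ParamThm2} with $\eta = \frac{(-p)^{\beta}-1}{e}$ directly yields: the PDS parameters in part (i), the DPDF parameters in part (ii), and the EPDF parameters in part (ii), matching the claimed formulas verbatim. The stated value $\eta = \frac{(-p)^m - 1}{e}$ in the theorem I am proving appears to use $m$ where Theorem \ref{ParamThm2} uses $\beta$; I would note this and confirm the intended exponent is $\beta = m/2$, so that $\eta = \frac{(-p)^{m/2}-1}{e}$ (I would flag this as a likely typographical point rather than a mathematical gap). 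Properness of both structures is immediate from Theorem \ref{ParamThm2}(ii): the DPDF is proper because $e \neq 2$ and the EPDF is proper because $u \geq 2 > 1$.

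The main obstacle, and really the only substantive content beyond citation, is verifying that the PDS-existence conditions of Theorem \ref{PDSparams} for each individual $e \in \{3,4,6,8\}$ do collapse to the single clean condition $p \equiv -1 \pmod e$ with $m$ even. For $e=3$ this is ``$p \equiv 2 \pmod 3$, $m$ even''; for $e=4$, ``$p \equiv 3 \pmod 4$''; for $e=6$, ``$p \equiv 5 \pmod 6$''; for $e=8$, ``$p \equiv 7 \pmod 8$''. In each case $p \equiv e-1 \pmod e$, i.e. $p \equiv -1 \pmod e$, so the four cases are genuinely unified. I would simply quote these four characterizations from Theorem \ref{PDSparams}, remark that they all give $p \equiv -1 \pmod e$, and thereby invoke Lemma \ref{Equiv_Baumert}. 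The remaining work is purely the bookkeeping of checking that the generic parameter formulas of Theorem \ref{ParamThm2} reproduce the specific parameter tuples stated here, which is a routine substitution requiring no new computation.

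Thus the proof reduces to: (1) apply Theorem \ref{PDSparams} to translate the PDS hypothesis into $p \equiv -1 \pmod e$, $m$ even; (2) apply Lemma \ref{Equiv_Baumert} to obtain uniformity of the cyclotomic numbers; (3) apply Theorem \ref{ParamThm2} to read off all three parameter sets and their properness. I expect no genuine difficulty, only the need to present the small-$e$ congruence conditions cleanly and to reconcile the $m$-versus-$\beta$ notation in the exponent of $\eta$.
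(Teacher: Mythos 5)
Your proposal is correct and follows essentially the same route as the paper: the paper likewise uses Theorem \ref{PDSparams} to reduce the PDS hypothesis to $q=p^{2m}$ with $p\equiv -1 \bmod e$, invokes the uniform-cyclotomy machinery (Theorem \ref{MillsThm}, with $s=(-p)^m$ exactly as in the proof of Theorem \ref{ParamThm2}), and reads off the DPDF/EPDF parameters and properness from the earlier lemmas. Your observation about reconciling the exponent of $\eta$ ($m$ versus $\beta=m/2$) correctly identifies a notational wrinkle that the paper resolves by writing $q=p^{2m}$ in its proof.
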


\begin{proof}
By Theorem \ref{PDSparams}, for $e \in \{3,4,6,8\}$,  $C_0^e$ is a PDS precisely when $q = p^{2m}$ and $p \equiv -1 \mod e$.  We may therefore apply Theorem \ref{MillsThm}; in its notation, the PDS parameters are $A=(0,0)_e=\eta^2-(e-3)\eta-1$ and $B=(i,0)_e=\eta^2+\eta$ for all $i \neq 0$.  It follows from Theorem 2.7, that $\mathcal{D}^{\prime}$ is both a proper $(q,u,\frac{q-1}{e},u\eta^2+(u+2-e)\eta-1,u(\eta^2+\eta))$-DPDF and a proper  $(q,u,\frac{q-1}{e},u(u-1)\eta^2+2(u-1)\eta,u(u-1)\eta^2)$-EPDF, where $\eta = \frac{s-1}{e}$ for $q = s^2$.  As in the proof of Theorem \ref{ParamThm2}, we can take $s=(-p)^m$.
\end{proof}

\begin{example}\label{ex:25}
Let $q=25$ with $e=6$ and $f=4$; since $q=5^2$ and $5 \equiv -1 \mod 6$, then $C_0^6$ (and hence all $C_i^6$, $0 \leq i \leq 5$) are $(25,4, 3,0)$-PDSs.  Let $\mathcal{D}^{\prime} = \{C_i^6\}_{i \in I}$ where $|I|=u$, $2 \leq u \leq 5$.  If $u=2$ then $\mathcal{D}^{\prime}$ is a $(25,2,4,3,0)$-DPDF/$(25,2,4,0,2)$-EPDF; if $u=3$ then $\mathcal{D}^{\prime}$ is a $(25,3,4,3,0)$-DPDF/$(25,3,4,2,6)$-EPDF; if $u=4$ then $\mathcal{D}^{\prime}$ is a $(25,4,4,3,0)$-DPDF/$(25,4,4,6,12)$-EPDF; and if $u=5$ then $\mathcal{D}^{\prime}$ is a $(25,5,4,3,0)$-DPDF/$(25,5,4,12,20)$-EPDF.  
\end{example}

We have the following recursive construction.  This guarantees we can take each set in our family to be the union of $u$ cyclotomic classes, where the sets are pairwise disjoint, and we obtain a DPDF which is also an EPDF.  The non-trivial examples are always proper, so this does not encompass any of the DDF/EDF constructions in the literature based on taking unions of classes.
\begin{theorem}\label{WDPDF}
Let $\rm{GF}(q^{\prime})$ be a finite field, where $q^{\prime} = q ^{2\beta}$ for some prime power $q$ and $\beta \in \mathbb{N}$. For $e \geq 3$, let $e \mid q+1$ and $\eta = \frac{(-q)^{\beta}-1}{e}$. \\
Let $u,w \in \mathbb{N}$ such that $wu \leq e$ and for $1 \leq a \leq w$, let $I_a \subset \{0,1,\ldots,e-1\}$ such that $|I_a|=u$ and $I_a \cap I_b = \emptyset$ for all $1 \leq a \neq b \leq w$.  Let $D_a = \bigcup\limits_{i \in I_a}C_i^e$, and $\mathcal{W}^{\prime} = \{D_1,D_2,\ldots,D_w\}$.  Then
\begin{itemize}
\item[(i)] $\mathcal{W}^{\prime}$ is a $(q^{\prime},w,u\frac{q^{\prime}-1}{e},u^2\eta^2+(3u-e)\eta-1+(w-1)(u^2\eta^2+u\eta),w(u^2\eta^2+u\eta))$ - DPDF.
\item[(ii)] When $w \geq 2$, $\mathcal{W}^{\prime}$ is a $(q^{\prime},w,u\frac{q^{\prime}-1}{e},w(w-1)u^2\eta^2 + 2(w-1)u\eta,w(w-1)u^2\eta^2)$-EPDF.
\item[(iii)]  If  $\mathcal{W}^{\prime}$ does not partition $GF(q^{\prime})$, then $\mathcal{W}^{\prime}$ is a DDF if and only if each $D_a$ is a difference set.
\item[(iv)] Let $w \geq 2$.  If  $\mathcal{W}^{\prime}$ does not partition $GF(q^{\prime})$, then    $\mathcal{W}^{\prime}$ is not an EDF.
\end{itemize}
\end{theorem}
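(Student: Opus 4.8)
The plan is to exploit the fact that, under these hypotheses, each $D_a$ and the union $S=\bigcup_{a=1}^{w}D_a$ are themselves (partial) difference sets, so that (i) and (ii) reduce to Theorem \ref{thm6a}, while (iii) and (iv) amount to deciding when the two frequencies produced there collapse into one. First I would record the structural input: since $e\mid q+1$ and $q^{\prime}=q^{2\beta}$, Lemma \ref{Equiv_Baumert} and Theorem \ref{MillsThm} show that the cyclotomic numbers of order $e$ over $GF(q^{\prime})$ are uniform, so Theorem \ref{ParamThm2} applies. As $|I_a|=u$, Theorem \ref{ParamThm2}(iii) makes every $D_a$ a common $(q^{\prime},u\tfrac{q^{\prime}-1}{e},\alpha_1,\alpha_2)$-PDS with $\alpha_1=u^2\eta^2+(3u-e)\eta-1$ and $\alpha_2=u^2\eta^2+u\eta$; writing $J=\bigcup_a I_a$ (so $|J|=wu$), the same result makes $S=\bigcup_{i\in J}C_i^e$ a PDS when $wu<e$, while if $wu=e$ then $S=G^*$ is the trivial $(q^{\prime},q^{\prime}-1,q^{\prime}-2)$-DS (the identity $(e\eta+1)^2=q^{\prime}$, coming from $e\eta+1=(-q)^{\beta}$, reconciling the two descriptions in this boundary case).

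With this input, (i) and (ii) are immediate from Theorem \ref{thm6a}. Part (i) of that result gives the DPDF with internal frequencies $\alpha_1+(w-1)\alpha_2$ and $w\alpha_2$, matching the stated parameters. Part (ii), applied with $S$ in the role of the enveloping PDS (or DS, when $wu=e$), gives the EPDF; substituting the parameters of $S$ and simplifying---the $\eta^2$-, $\eta$- and constant-terms each collapse---yields external frequencies $w(w-1)u^2\eta^2+2(w-1)u\eta$ (on $S$) and $w(w-1)u^2\eta^2$ (on $G^*\setminus S$), as claimed. I regard this as routine algebra and would not dwell on it.

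For (iii), $\mathcal{W}^{\prime}$ is a DDF exactly when ${\rm Int}(\mathcal{W}^{\prime})$ has a single frequency over $G^*$. By (i) its frequencies are $\alpha_1+(w-1)\alpha_2$ on $S$ and $w\alpha_2$ on $G^*\setminus S$; the hypothesis that $\mathcal{W}^{\prime}$ does not partition $GF(q^{\prime})$ means $S\neq G^*$, so both classes are nonempty and equality of frequencies forces $\alpha_1=\alpha_2$. Since a PDS with $\alpha_1=\alpha_2$ is precisely a difference set, this is the desired equivalence. For (iv), the same nonemptiness shows $\mathcal{W}^{\prime}$ can be an EDF only if its two external frequencies agree, i.e.\ only if their difference $2(w-1)u\eta$ vanishes; but $w\geq2$, $u\geq1$, and $\eta\neq0$ (since $(-q)^{\beta}\neq1$, as $q\geq2$ and $\beta\geq1$), so the difference is nonzero and $\mathcal{W}^{\prime}$ is not an EDF.

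The one delicate point, and the main thing to get right, is the role of the non-partition hypothesis in (iii) and (iv). If $S=G^*$ then $G^*\setminus S=\emptyset$, the second frequency becomes vacuous, and $\mathcal{W}^{\prime}$ is then automatically both a DDF and an EDF irrespective of whether the $D_a$ are difference sets---so both statements would fail. I would therefore state explicitly that $S\neq G^*$ keeps both frequency classes nonempty, which is exactly what converts the coincidence (respectively non-coincidence) of the two frequencies into the DDF (respectively EDF) conclusion.
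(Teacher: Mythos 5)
Your proposal is correct and follows essentially the same route as the paper: invoke Theorem \ref{ParamThm2} (via uniform cyclotomy) to make each $D_a$ and their union $S$ partial difference sets, feed these into Theorem \ref{thm6a}/Theorem \ref{thm3} for (i) and (ii), and for (iii) and (iv) use the non-partition hypothesis to reduce the DDF/EDF questions to equality of the two frequencies, which gives $(2u-e)\eta=1$ (the difference-set condition) and $(w-1)u\eta=0$ (impossible for $w\geq 2$) respectively. Your explicit treatment of the boundary case $wu=e$, where $S=G^*$ is the trivial difference set rather than a proper PDS, is a point the paper's proof glosses over and is worth keeping.
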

\begin{proof}
\begin{itemize}
\item[(i)] For each $1 \leq a \leq w$, by Theorem \ref{ParamThm2} we have that $D_a = \bigcup_{i \in I_a}C_i^e$ is a $(q^{\prime},u\frac{q^{\prime}-1}{e},u^2\eta^2+(3u-e)\eta-1,u^2\eta^2+u\eta)$-PDS.  The result then follows from Theorem \ref{thm6a}, after simplifying the parameter expressions.

\item[(ii)] Let $w \geq 2$ and let $W = \cup_{a=1}^{w}D_a$, where  $D_a \in \mathcal{W}^{\prime}$. Since $D_a = \cup_{i \in I_a}C_i^e$, it follows that $W = \cup_{a=1}^{w}\left(\cup_{i \in I_a}C_i^e\right)$, i.e. $W$ is the union of $uw$ $e^{th}$ cyclotomic classes of order $e$. By Theorem \ref{ParamThm2}, $W$ is  $(q^{\prime},uw\frac{q^{\prime}-1}{e},(uw)^2\eta^2 + (3uw-e)\eta-1,(uw)^2\eta^2+uw\eta)$-PDS.

Since $W$ is a PDS that is partitioned by $\mathcal{W}^{\prime}$,  Theorem \ref{thm3} guarantees that $\mathcal{W}^{\prime}$ is also an EDF.  Its parameters may be calculated from the DPDF and PDS parameters according to Theorem \ref{thm3}.
\item[(iii)] If $\mathcal{W}^{\prime}$ does not partition $GF(q^{\prime})$, then it is a DDF when the two frequency parameters are equal.  This happens precisely when $(2u-e)\eta =1$, which by Theorem \ref{ParamThm2} precisely corresponds to the situation when each $D_a$ is a DS. It is immediate that any collection of difference sets forms a DDF.
\item[(iv)]   Let $w \geq 2$. If $\mathcal{W}^{\prime}$ does not partition $GF(q^{\prime})$, then it is an EDF when the two frequency parameters are equal.  This occurs if $(w-1)u\eta = 0$; since $u$ and $\eta$ must be non-zero integers, this implies $w-1=0$ - a contradiction.
\end{itemize}
\end{proof}

\begin{example}
Let $G=GF(49)$, $q=7$ and $e=8$.  In Theorem \ref{WDPDF}  take $u=2$ and $w=3$,  and choose disjoint $2$-sets e.g. $I_1= \{0,2\}$, $I_2 = \{5,6\}$ and $I_3 = \{4,7\}$.  Then $D_1=C_0^8 \cup C_2^8$, $D_2=C_5^8 \cup C_6^8$ and $D_3=C_4^8 \cup C_7^8$ are all (49,12,5,2)-PDSs. Let $\mathcal{W}^{\prime} = \{D_1,D_2,D_3\}$; then $\mathcal{W}^{\prime}$ is a (49,3,12,9,6)-DPDF and a (49,2,12,16,24)-EPDF. 
\end{example}

\section{Partition constructions of DPDFs and EPDFs}

Next, we develop constructions which are simultaneously DPDFs and EPDFs, where the component sets are not necessarily PDSs. Let $q=ef+1=\epsilon \rho+1$ ($\epsilon|e$) be a prime power.

Our approach is to partition $C_0^{\epsilon}$ into smaller cyclotomic classes, when $C_0^{\epsilon}$ is a DS or PDS.  Throughout,  ${C_0^{\epsilon}}^{\prime} = \{C_0^e,C_{\epsilon}^e,\ldots,C_{e-\epsilon}^e \}$ and we will require $e>\epsilon$.
Many known constructions of DDFs and EDFs are in fact of this partition-type (e.g. in \cite{CheLinLin}, \cite{ChaDin} and \cite{HuaWu}), and occur as special cases of the theorems in this section.  All parameters obtainable from the results of this section, with $\epsilon \in \{2,3,4,6,8\}$ and $q \leq 121$, are listed in the second Appendix at the end of the paper.

We first consider the situation when $C_0^{\epsilon}$ is a difference set.  

\begin{theorem}\label{thmg5}
Let \rm{GF}$(q)$ be a finite field of order $q$, where $q=ef+1$ is a prime power and $\epsilon|e$.  Let $C_0^{\epsilon}$ be a $(q,\frac{q-1}{\epsilon},\lambda)$-Difference Set  (here $\lambda=\frac{q-1-\epsilon}{\epsilon^2}$) and denote ${C_0^{\epsilon}}^{\prime} = \{C_0^e,C_{\epsilon}^e,\ldots,C_{e-\epsilon}^e \}$.
\begin{itemize}
\item[(i)] If $\phi_i \neq \phi_j$ for some distinct $i,j \in \{1, \ldots, \epsilon-1\}$ then ${C_0^{\epsilon}}^{\prime}$ is not a DPDF nor an EPDF.
\item[(ii)] Otherwise, ${C_0^{\epsilon}}^{\prime}$ is a $(q,\frac{e}{\epsilon},f,\frac{f-1}{\epsilon})$-DDF and a $(q,\frac{e}{\epsilon},f, {\frac{(e-\epsilon)f}{\epsilon^2})}$-EDF.
\end{itemize}
In particular, ${C_0^{\epsilon}}^{\prime}$ cannot be a proper DPDF nor a proper EPDF.
\end{theorem}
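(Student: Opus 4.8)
The plan is to analyse the internal and external difference structure of ${C_0^{\epsilon}}^{\prime}$ via the quantities $\phi_0, \phi_1, \ldots, \phi_{\epsilon-1}$ introduced in Definition \ref{defg1CD}, and to exploit the hypothesis that $C_0^{\epsilon}$ is a difference set. First I would invoke Proposition \ref{prop1CD} and Theorem \ref{prop2CD} to describe ${\rm Int}({C_0^{\epsilon}}^{\prime})$ entirely in terms of the $\phi_i$: the diagonals of transversals $D_1, \ldots, D_{f-1}$ each equal some $\alpha^i C_0^{\epsilon}$, and $\phi_i$ counts how many of them land in the coset $\alpha^i C_0^{\epsilon}$. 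Thus ${C_0^{\epsilon}}^{\prime}$ is a DPDF precisely when $\phi_1 = \cdots = \phi_{\epsilon-1}$, which immediately handles the dichotomy: if these values are not all equal (part (i)), the internal differences are not distributed according to the required two-frequency pattern, so ${C_0^{\epsilon}}^{\prime}$ is neither a DPDF nor (by Theorem \ref{thm3} applied to the difference set $C_0^{\epsilon}$, whose internal and external differences are complementary) an EPDF.

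For part (ii), assuming $\phi_1 = \cdots = \phi_{\epsilon-1}$, the key step is to pin down the actual frequencies. Here I would use the difference-set hypothesis crucially: since $C_0^{\epsilon}$ is a $(q, \frac{q-1}{\epsilon}, \lambda)$-DS, we have $\Delta(C_0^{\epsilon}) = \lambda(G^*)$, so every nonzero element occurs $\lambda = \frac{q-1-\epsilon}{\epsilon^2}$ times as an internal difference of $C_0^{\epsilon}$. The family ${C_0^{\epsilon}}^{\prime}$ partitions $C_0^{\epsilon}$ into $\frac{e}{\epsilon}$ classes each of size $f$, so I can apply Theorem \ref{thm1}: a partition of a difference set $D$ is a DDF if and only if it is an EDF, with the internal and external multiplicities summing to $\lambda$. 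This forces $\phi_0 = \phi_1 = \cdots = \phi_{\epsilon-1}$ (not merely $\phi_1 = \cdots = \phi_{\epsilon-1}$), since on a difference set the internal differences must themselves be uniform across all of $G^*$. A direct count then gives the DDF index: the total internal multiset has size $\frac{e}{\epsilon} \cdot f(f-1)$ spread uniformly over $q-1$ elements, yielding index $\frac{f-1}{\epsilon}$. By Theorem \ref{thm1}, the EDF index is $\lambda - \frac{f-1}{\epsilon} = \frac{(e-\epsilon)f}{\epsilon^2}$, after substituting $\lambda = \frac{q-1-\epsilon}{\epsilon^2}$ and $q-1 = ef$ and simplifying.

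The final claim — that ${C_0^{\epsilon}}^{\prime}$ can be neither a proper DPDF nor a proper EPDF — then follows because being a DPDF or EPDF at all forces the uniform case (ii), where by definition $\lambda = \mu$ and so the structure degenerates to a genuine DDF/EDF rather than a proper partial family. The main obstacle I anticipate is the careful bookkeeping in establishing that the DPDF condition $\phi_1 = \cdots = \phi_{\epsilon-1}$ actually upgrades to the full equality $\phi_0 = \phi_1 = \cdots = \phi_{\epsilon-1}$ under the difference-set hypothesis; this is where the special structure of $C_0^{\epsilon}$ as a DS (rather than a proper PDS) is indispensable, and one must rule out the possibility that $\phi_0$ differs from the common value of the others while still yielding a valid two-frequency internal distribution. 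Parity considerations from Theorem \ref{combined} relating $\phi_i$ to $\psi_i$, together with the constraint that $\Delta(C_0^{\epsilon})$ is perfectly uniform, should close this gap.
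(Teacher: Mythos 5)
Your part (i) and your index computations are fine, but the heart of part (ii) is left unproved. The step you describe as forced --- that $\phi_1=\cdots=\phi_{\epsilon-1}$ upgrades to $\phi_0=\phi_1=\cdots=\phi_{\epsilon-1}$ because ``on a difference set the internal differences must themselves be uniform across all of $G^*$'' --- is exactly the assertion being proved, not a consequence of Theorem \ref{thm1}. Theorem \ref{thm1} (and Theorem \ref{thm3}(i)) only say that a partition of a difference set is a DDF \emph{if and only if} it is an EDF, equivalently a DPDF iff an EPDF with complementary frequencies; a priori the partition could be a proper DPDF/proper EPDF pair, with ${\rm Int}$ non-uniform, and ruling this out is the whole content of the final sentence of the theorem. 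So the justification offered in your main argument is circular, and you correctly flag this as the remaining obstacle --- but the tool you point to (the parity relations $\phi_i=2\psi_i$ or $2\psi_i+1$ from Theorem \ref{combined}(a),(b)) is the wrong part of that theorem: those cases require $q\equiv 1 \bmod 2\epsilon$ or $\epsilon$ odd, neither of which can hold here.

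The paper closes the gap as follows. Since $C_0^{\epsilon}$ is a difference set, Theorem \ref{DSresults} forces $q$ odd, $\epsilon$ even and $\rho$ odd, hence $q\equiv \epsilon+1 \bmod 2\epsilon$. Then Lemma \ref{lem4CD}(a)(ii) gives $-1\in\alpha^{\epsilon/2}C_0^{\epsilon}$, so by Proposition \ref{lem5CD}/\ref{diagonals} the diagonals pair up via $D_{f-r}=-D_r=\alpha^{\epsilon/2}D_r$ (and $f$ is odd, so there is no central diagonal); this yields $\phi_i=\phi_{i+\epsilon/2}$ for all $i$, which is Theorem \ref{combined}(c). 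In particular $\phi_0=\phi_{\epsilon/2}$, and since $\epsilon/2\in\{1,\ldots,\epsilon-1\}$ this combines with the hypothesis $\phi_1=\cdots=\phi_{\epsilon-1}$ to give full uniformity. Note this is a pairing argument about which coset $-1$ lies in, not a parity argument about the $\psi_i$. Once uniformity is established, your counting ($\frac{e}{\epsilon}f(f-1)$ internal differences spread over $ef$ elements gives index $\frac{f-1}{\epsilon}$, and $\lambda-\frac{f-1}{\epsilon}=\frac{(e-\epsilon)f}{\epsilon^2}$) matches the paper and completes the proof.
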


\begin{proof}
By Remark \ref{rem1},
\begin{equation*}
{\rm Int}({C_0^{\epsilon}}^{\prime}) \cup {\rm Ext}({C_0^{\epsilon}}^{\prime}) = \lambda{C_0^{\epsilon}} + \lambda({G}^*\backslash{C_0^{\epsilon}}).    
\end{equation*}
By basic difference set properties, $\lambda(q-1)=(\frac{q-1}{\epsilon})(\frac{q-1}{\epsilon}-1)$, giving the stated value of $\lambda$.

\begin{itemize}
\item[(i)] By Theorem \ref{prop2CD}, $\phi_1=\phi_2=\cdots=\phi_{\epsilon-1}$ is a necessary condition for ${C_0^{\epsilon}}^{\prime}$ to be a DPDF, and hence an EPDF (since $C_0^{\epsilon}$ is a difference set).

\item[(ii)] Since $C_0^{\epsilon}$ is a difference set, by Theorem \ref{DSresults}, $q$ must be odd, $\epsilon$ must be even and $\rho$ must be odd.  By assumption, $\phi_1=\phi_j$ for all $2 \leq j \leq \epsilon-1$.  By Theorem \ref{combined} we have $\phi_0=\phi_{\frac{\epsilon}{2}}$.  Since $\frac{\epsilon}{2} \in \{1, \ldots, \epsilon-1\}$, we have $\phi_0 =\phi_1= \cdots= \phi_{\epsilon-1}$, so by Theorem \ref{prop2CD}, ${C_0^{\epsilon}}^{\prime}$ is a $(q,\frac{e}{\epsilon},f,\phi_0)$-DDF. Since $C_0^{\epsilon}$ is a DS, ${C_0^{\epsilon}}^{\prime}$ is an EDF. 

There are precisely $f-1$ diagonals of transversals in Int$({C_0^{\epsilon}}^{\prime})$, each corresponding to a cyclotomic class $\alpha^iC_0^{\epsilon} = \alpha^i C_0^{\epsilon}$ ($0 \leq i \leq \epsilon-1$), hence $\phi_i=\frac{f-1}{\epsilon}$ for all $0 \leq i \leq \epsilon-1$. So
\begin{equation*}
{\rm Int}({C_0^{\epsilon}}^{\prime}) = \left(\frac{f-1}{\epsilon}\right){G^*} \mbox{ and }
{\rm Ext}({C_0^{\epsilon}}^{\prime})= \left(\lambda-\frac{f-1}{\epsilon} \right){G}^*,
\end{equation*}
and $\lambda-(\frac{f-1}{\epsilon})=\frac{ef-\epsilon}{\epsilon^2}-\frac{f-1}{\epsilon}=\frac{(e-\epsilon)f}{\epsilon^2}$.
\end{itemize}
\end{proof}

The following result, known in the literature (\cite{Ton},\cite{HuaWu}), is an immediate consequence.
\begin{corollary}\label{corg3}
Let GF$(q)$ be a finite field of order $q = ef + 1 \equiv 3 \mod 4$ where $e$ is even.  Denote by $C_0^2$ the set of squares and let ${C_0^2}^{\prime} = \{C_0^e,C_2^e,\ldots,C_{e-2}^e\}$.   Then ${C_0^2}^{\prime}$ is a $(q,\frac{e}{2},f,\frac{f-1}{2})$-DDF and a $(q,\frac{e}{2},f,\frac{(e-2)f}{4})$-EDF. 
\end{corollary}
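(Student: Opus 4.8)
The plan is to derive Corollary \ref{corg3} as the special case $\epsilon = 2$ of Theorem \ref{thmg5}, so the essential work is verifying that the hypotheses of Theorem \ref{thmg5} are satisfied in this setting and then substituting $\epsilon = 2$ into its parameter formulas. First I would confirm that $C_0^2$, the set of nonzero squares, is indeed a difference set. Since $q = ef+1 \equiv 3 \bmod 4$, Theorem \ref{DSresults}(iii) tells us that $C_0^2$ is a $(q, \frac{q-1}{2}, \frac{q-3}{4})$-Difference Set; this matches the requirement that $C_0^{\epsilon}$ be a $(q, \frac{q-1}{\epsilon}, \lambda)$-DS with $\epsilon = 2$, and the value $\lambda = \frac{q-1-\epsilon}{\epsilon^2} = \frac{q-3}{4}$ is consistent. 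Note also that $e$ even together with $q$ odd forces $\rho = \frac{q-1}{2}$ to have the right parity, and $\epsilon = 2$ is trivially even, so the structural conditions invoked in part (ii) of the theorem ($q$ odd, $\epsilon$ even, $\rho$ odd) hold automatically.

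The one genuine thing to check is that we are in case (ii) rather than case (i) of Theorem \ref{thmg5}, i.e. that the partition ${C_0^2}^{\prime}$ really does yield a DDF/EDF rather than failing the $\phi$-condition. This is where the $\epsilon = 2$ reduction is especially clean: the condition $\phi_i \neq \phi_j$ for distinct $i,j \in \{1, \ldots, \epsilon-1\}$ in part (i) is vacuous when $\epsilon = 2$, since the index set $\{1, \ldots, \epsilon-1\} = \{1\}$ is a singleton and contains no distinct pair. Hence the hypothesis of part (i) can never be triggered, and we always fall into the "otherwise" branch, case (ii). I would remark explicitly on this vacuity, as it is the step that guarantees we land in the favorable case without any further computation; this is the closest thing to an obstacle, but it dissolves immediately once one observes that there are no distinct indices to compare.

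Finally I would substitute $\epsilon = 2$ into the DDF and EDF parameters given in Theorem \ref{thmg5}(ii). The DDF parameters $(q, \frac{e}{\epsilon}, f, \frac{f-1}{\epsilon})$ become $(q, \frac{e}{2}, f, \frac{f-1}{2})$, and the EDF parameters $(q, \frac{e}{\epsilon}, f, \frac{(e-\epsilon)f}{\epsilon^2})$ become $(q, \frac{e}{2}, f, \frac{(e-2)f}{4})$, which are exactly the claimed values. The number of sets in the family ${C_0^2}^{\prime} = \{C_0^e, C_2^e, \ldots, C_{e-2}^e\}$ is $\frac{e}{2}$, matching $\frac{e}{\epsilon}$, and each cyclotomic class $C_i^e$ has cardinality $f = \frac{q-1}{e}$. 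This completes the identification. The proof is thus almost entirely a matter of instantiation: the real content lives in Theorem \ref{thmg5} and Theorem \ref{DSresults}(iii), and Corollary \ref{corg3} is obtained by specializing and simplifying, with the only subtlety being the vacuous-condition observation that forces us into case (ii).
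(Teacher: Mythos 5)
Your proposal is correct and follows essentially the same route as the paper, which proves the corollary by applying Theorem \ref{thmg5} with $\epsilon=2$ and observing that case (i) is impossible because $\epsilon-1=1$ leaves no pair of distinct indices to compare. Your additional verification that $C_0^2$ is a $(q,\frac{q-1}{2},\frac{q-3}{4})$-difference set via Theorem \ref{DSresults}(iii) is a reasonable explicit check of the hypothesis that the paper leaves implicit.
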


\begin{proof}
Apply Theorem \ref{thmg5} with $\epsilon=2$; case (i) is impossible as $\epsilon-1=1$, so case (ii) must hold.
\end{proof}

\begin{corollary}\label{corg4}
Let $\rm{GF}(q)$ be a finite field of order $q=ef+1$, where $4|e$, such that $q = 1 + 4t^2$ is the unique proper representation of $q$ and $t$ is odd.  Let ${C_0^4}^{\prime}=\{C_0^e,C_4^e,\ldots,C_{e-4}^e\}$.  If $\phi_1 = \phi_2 = \phi_3$, then ${C_0^4}^{\prime}$ is a $(q,\frac{e}{4},f,\frac{f-1}{4})$-DDF and a $(q,\frac{e}{4},f,\frac{(e-4)f}{16})$-EDF. 
\end{corollary}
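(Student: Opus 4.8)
The plan is to apply Theorem \ref{thmg5} with $\epsilon = 4$, for which I first need to verify that the hypotheses of that theorem are satisfied under the present assumptions. The key observation is that Corollary \ref{corg4} assumes $q = 1 + 4t^2$ with $t$ odd, which by Theorem \ref{DSresults}(iv) is precisely the condition guaranteeing that $C_0^4$ is a difference set. Since $4 \mid e$, we have $\epsilon = 4$ dividing $e$, so $C_0^4 = C_0^{\epsilon}$ is indeed a difference set of the type required by Theorem \ref{thmg5}, with $\lambda = \frac{q-1-\epsilon}{\epsilon^2} = \frac{q-5}{16}$.

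Next I would invoke the dichotomy in Theorem \ref{thmg5}. That theorem splits into case (i), where some $\phi_i \neq \phi_j$ for distinct $i,j \in \{1,\ldots,\epsilon-1\}$, and case (ii), where $\phi_1 = \cdots = \phi_{\epsilon-1}$. Here $\epsilon - 1 = 3$, so the relevant indices are $\{1,2,3\}$, and the hypothesis $\phi_1 = \phi_2 = \phi_3$ of the corollary is exactly the statement that we are in case (ii) (equivalently, that case (i) fails). Thus I would simply note that the assumption $\phi_1 = \phi_2 = \phi_3$ places us in case (ii) of Theorem \ref{thmg5}.

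Applying Theorem \ref{thmg5}(ii) directly then yields that ${C_0^4}^{\prime} = \{C_0^e, C_4^e, \ldots, C_{e-4}^e\}$ is a $(q, \frac{e}{\epsilon}, f, \frac{f-1}{\epsilon})$-DDF and a $(q, \frac{e}{\epsilon}, f, \frac{(e-\epsilon)f}{\epsilon^2})$-EDF. Substituting $\epsilon = 4$ gives the $(q, \frac{e}{4}, f, \frac{f-1}{4})$-DDF and $(q, \frac{e}{4}, f, \frac{(e-4)f}{16})$-EDF parameters claimed in the corollary, completing the proof.

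There is essentially no obstacle here: unlike Corollary \ref{corg3}, where the case $\epsilon = 2$ forces $\epsilon - 1 = 1$ and hence automatically rules out case (i), the present situation with $\epsilon = 4$ genuinely requires the additional hypothesis $\phi_1 = \phi_2 = \phi_3$ to exclude case (i) of Theorem \ref{thmg5}. This is why the corollary must state that condition explicitly, whereas Corollary \ref{corg3} does not. The only point needing care is confirming that the difference-set hypothesis of Theorem \ref{thmg5} is met, which follows from matching the arithmetic condition $q = 1 + 4t^2$ with $t$ odd to Theorem \ref{DSresults}(iv); everything else is a direct substitution into the already-established general theorem.
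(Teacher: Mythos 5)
Your proposal is correct and follows essentially the same route as the paper: the paper's proof likewise applies Theorem \ref{thmg5} with $\epsilon=4$, citing Theorem \ref{DSresults} to confirm that $C_0^4$ is a $(q,\frac{q-1}{4},\frac{q-5}{16})$-difference set, with the hypothesis $\phi_1=\phi_2=\phi_3$ placing the situation in case (ii). Your additional remark contrasting this with the automatic exclusion of case (i) when $\epsilon=2$ is accurate but not needed beyond what the paper records.
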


\begin{proof}
Apply Theorem \ref{thmg5} with $\epsilon=4$.  By Proposition \ref{DSresults}, $C_0^4$ forms a $(q,\frac{q-1}{4},\frac{q-5}{16})$-Difference Set. Here $\lambda = \frac{q-5}{16}$.
\end{proof}

We note that an EDF result related to the above corollary was given in \cite{CheLinLin}, where it was shown by computational checking that for $q=2917=1+4(27)^2$, $e=324$ and $f=9$ ${C_0^4}^{\prime}$ is a $(2917, 81, 9, 2)$-DDF and a $(2917,81,9,180)$-EDF.

We now consider the situation when we partition a PDS, and show that proper DPDFs and proper EPDFs are obtainable from this (as well as combinations involving DDFs and EDFs).

\begin{theorem}\label{thm6}
Let GF$(q)$ be a finite field of order $q=ef+1$, such that $q$ is a prime power and $\epsilon|e$.  Let $C_0^{\epsilon}$ be a proper $(q,\frac{ef}{\epsilon},\lambda,\mu)$-PDS and denote ${C_0^{\epsilon}}^{\prime} = \{C_0^e,C_{\epsilon}^e,\ldots,C_{e-\epsilon}^e \}$.
\begin{itemize}
\item[(i)] If $\phi_i \neq \phi_j$ for some distinct $i,j \in \{1, \ldots, \epsilon-1\}$ then ${C_0^{\epsilon}}^{\prime}$ is not a DPDF nor an EPDF.  Otherwise, let $\kappa=\phi_1 - \phi_0$; then
\item [(ii)] when $\kappa=0$, ${C_0^{\epsilon}}^{\prime}$ is a $(q,\frac{e}{\epsilon},f,\frac{f-1}{\epsilon})$-DDF and a $(q,\frac{e}{\epsilon},f,\lambda-\frac{f-1}{\epsilon},\mu-\frac{f-1}{\epsilon})$-EPDF.
\item [(iii)] when $\kappa=\mu - \lambda$, ${C_0^{\epsilon}}^{\prime}$ is a  $(q,\frac{e}{\epsilon},f,\frac{(e-\epsilon)f}{\epsilon^2}$)-EDF and a $(q,\frac{e}{\epsilon},f,\lambda-\frac{(e-\epsilon)f}{\epsilon^2},\mu-\frac{(e-\epsilon)f}{\epsilon^2})$-DPDF.
\item [(iv)] when $\kappa \not\in \{0,\mu-\lambda\}$, ${C_0^{\epsilon}}^{\prime}$ is a proper $(q,\frac{e}{\epsilon},f,\phi_0,\phi_0+\kappa)$-DPDF and a proper $(q,\frac{e}{\epsilon},f,\lambda-\phi_0,\mu-\phi_0- \kappa)$-EPDF.
\end{itemize}
\end{theorem}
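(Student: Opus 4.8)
The plan is to mirror the structure of the proof of Theorem~\ref{thmg5}, but to carry the two PDS frequencies $\lambda$ and $\mu$ separately throughout rather than the single difference-set frequency. First I would record the ambient identity: by Remark~\ref{rem1}, ${\rm Int}({C_0^{\epsilon}}^{\prime}) \cup {\rm Ext}({C_0^{\epsilon}}^{\prime}) = \Delta(C_0^{\epsilon}) = \lambda(C_0^{\epsilon}) + \mu(G^* \setminus C_0^{\epsilon})$, using that $C_0^{\epsilon}$ is a $(q,\tfrac{ef}{\epsilon},\lambda,\mu)$-PDS and that ${C_0^{\epsilon}}^{\prime}$ partitions $C_0^{\epsilon}$ (Lemma~\ref{union}). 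The crux is the exact form of the internal differences: by Proposition~\ref{prop1CD}, ${\rm Int}({C_0^{\epsilon}}^{\prime}) = \bigcup_{r=1}^{f-1} D_r$ with each $D_r = (\alpha^{re}-1)C_0^{\epsilon} = \alpha^{i}C_0^{\epsilon}$, so that as $\alpha^{re}$ runs over $C_0^e \setminus \{1\}$ the multiplicity of $\alpha^i C_0^{\epsilon}$ in ${\rm Int}({C_0^{\epsilon}}^{\prime})$ is exactly $\phi_i$ (Definition~\ref{defg1CD}). Hence ${\rm Int}({C_0^{\epsilon}}^{\prime}) = \sum_{i=0}^{\epsilon-1}\phi_i(\alpha^i C_0^{\epsilon})$, and I would note the counting identity $\sum_{i=0}^{\epsilon-1}\phi_i = f-1$ (each of the $f-1$ diagonals lands in exactly one class) together with the PDS relation $\lambda + (\epsilon-1)\mu = \tfrac{ef}{\epsilon}-1$, obtained by counting $|\Delta(C_0^{\epsilon})|$ in two ways, exactly as for Equation~\eqref{multi}.

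For part (i), I observe that $G^* \setminus C_0^{\epsilon} = \bigcup_{i=1}^{\epsilon-1}\alpha^i C_0^{\epsilon}$, so if $\phi_i \neq \phi_j$ for some $i,j \in \{1,\ldots,\epsilon-1\}$ then elements of $\alpha^i C_0^{\epsilon}$ and $\alpha^j C_0^{\epsilon}$ occur with distinct multiplicities $\phi_i, \phi_j$ in ${\rm Int}({C_0^{\epsilon}}^{\prime})$; since both sets lie in $G^* \setminus S$, the internal differences cannot take a single constant value on $G^* \setminus S$, so ${C_0^{\epsilon}}^{\prime}$ is not a DPDF. The EPDF claim then follows at once from Theorem~\ref{thm3}(ii): were ${C_0^{\epsilon}}^{\prime}$ an EPDF it would also be a DPDF, a contradiction.

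For parts (ii)--(iv), I assume $\phi_1 = \cdots = \phi_{\epsilon-1}$ and set $\kappa = \phi_1 - \phi_0$. Theorem~\ref{prop2CD} then gives ${\rm Int}({C_0^{\epsilon}}^{\prime}) = \phi_0(S) + (\phi_0 + \kappa)(G^* \setminus S)$, i.e. a $(q,\tfrac{e}{\epsilon},f,\phi_0,\phi_0+\kappa)$-DPDF, and Theorem~\ref{thm3}(ii) converts this into a $(q,\tfrac{e}{\epsilon},f,\lambda-\phi_0,\mu-\phi_0-\kappa)$-EPDF. Solving $\epsilon\phi_0 + (\epsilon-1)\kappa = f-1$ gives $\phi_0 = \tfrac{f-1-(\epsilon-1)\kappa}{\epsilon}$, and the three cases are then a matter of substitution: $\kappa=0$ forces $\phi_0 = \tfrac{f-1}{\epsilon}$ and collapses the DPDF to a DDF (leaving a proper EPDF, since $\lambda \neq \mu$); $\kappa = \mu-\lambda$ collapses the EPDF to an EDF (leaving a proper DPDF); and otherwise $\kappa \neq 0$ and $\kappa \neq \mu-\lambda$ give unequal frequency pairs in both, so both structures are proper.

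The one genuinely computational step, and the part I expect to require the most care, is verifying in case (iii) that the EDF frequency $\lambda - \phi_0$ simplifies to $\tfrac{(e-\epsilon)f}{\epsilon^2}$, and correspondingly that the DPDF frequencies become $\lambda - \tfrac{(e-\epsilon)f}{\epsilon^2}$ and $\mu - \tfrac{(e-\epsilon)f}{\epsilon^2}$. Substituting $\kappa = \mu-\lambda$ into the formula for $\phi_0$ and then eliminating $\lambda,\mu$ via the PDS relation $\lambda + (\epsilon-1)\mu = \tfrac{ef}{\epsilon}-1$ yields $\lambda - \phi_0 = \tfrac{\lambda + (\epsilon-1)\mu - (f-1)}{\epsilon} = \tfrac{(e-\epsilon)f}{\epsilon^2}$; keeping the bookkeeping of the two frequencies straight here, and confirming that the resulting pair matches the DPDF frequencies claimed, is where the argument must be handled most carefully.
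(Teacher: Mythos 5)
Your proposal is correct and follows essentially the same route as the paper: the ambient identity from Remark \ref{rem1}, the decomposition of ${\rm Int}({C_0^{\epsilon}}^{\prime})$ into diagonals contributing multiplicity $\phi_i$ to $\alpha^i C_0^{\epsilon}$, Theorem \ref{prop2CD} for the DPDF, and the complementation against the PDS frequencies for the EPDF. The only (cosmetic) divergence is in case (iii), where you obtain $\lambda-\phi_0=\frac{(e-\epsilon)f}{\epsilon^2}$ algebraically from $\epsilon\phi_0+(\epsilon-1)\kappa=f-1$ and $\lambda+(\epsilon-1)\mu=\frac{ef}{\epsilon}-1$, whereas the paper counts the $\frac{(e-\epsilon)f}{\epsilon}$ transversals of $\Delta(C_0^{\epsilon})$ remaining in ${\rm Ext}({C_0^{\epsilon}}^{\prime})$ and divides them equally among the $\epsilon$ classes; both computations are valid and yield the stated parameters.
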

\begin{proof}
By Remark \ref{rem1},
\begin{equation}
\rm{Int}({C_0^{\epsilon}}^{\prime}) + \rm{Ext}({C_0^{\epsilon}}^{\prime}) = \lambda{C_0^{\epsilon}} + \mu(\rm{G}^*\backslash{C_0^{\epsilon}}).
\end{equation}
$C_0^{\epsilon}$ comprises $\frac{e}{\epsilon}$ cyclotomic classes of the form $C_{\epsilon{i}}^e$, where $0 \leq i \leq \frac{e}{\epsilon}-1$; each cyclotomic class $C_{\epsilon{i}}^e$ has cardinality $f$.

\begin{itemize}
\item[(i)] By Theorem \ref{prop2CD}, $\phi_1=\phi_2= \cdots = \phi_{\epsilon-1}$ is a necessary condition for ${C_0^{\epsilon}}^{\prime}$ to be a DPDF and hence an EPDF (since $C_0^{\epsilon}$ is a PDS).
\item [(ii)] Since $\phi_1=\phi_j$ for all $2 \leq j \leq \epsilon-1$ and $\kappa=0$, i.e $\phi_0 = \phi_1$,  there are $\frac{f-1}{\epsilon}$ transversals corresponding to each cyclotomic class $\alpha^i C_0^{\epsilon}$. Hence, ${C_0^{\epsilon}}^{\prime}$ is a $(q,\frac{e}{\epsilon},f,\frac{f-1}{\epsilon})$-DDF, i.e.
\begin{equation*}
{\rm Int}({C_0^{\epsilon}}^{\prime}) = \left(\frac{f-1}{\epsilon}\right)({G}^*)
\end{equation*}
and so
\begin{equation*}
{\rm Ext}({C_0^{\epsilon}}^{\prime}) = \left(\lambda-\frac{f-1}{\epsilon}\right)(C_0^{\epsilon}) + \left(\mu-\frac{f-1}{\epsilon}\right)({G}\backslash{C_0^{\epsilon}}).
\end{equation*}

\item[(iii)] When $\kappa=\mu-\lambda$, we have $\phi_1 = \phi_0 + \mu - \lambda$. ${C_0^{\epsilon}}^{\prime}$ is a $(q,\frac{ef}{\epsilon},f,\phi_0,\phi_0+\kappa)$-DPDF, with
\begin{equation*}
{\rm Int}({C_0^{\epsilon}}^{\prime}) = \phi_0(C_0^{\epsilon}) + \phi_1({G}^*\backslash{C_0^{\epsilon}}) = \phi_0(C_0^{\epsilon}) + (\phi_0+\kappa)({G}^*\backslash{C_0^{\epsilon}}) = \phi_0(C_0^{\epsilon}) + (\phi_0 + \mu - \lambda)({G}^*\backslash{C_0^{\epsilon}}).
\end{equation*}
We then have
\begin{equation*}
{\rm Ext}({C_0^{\epsilon}}^{\prime}) = (\lambda - \phi_0)C_0^{\epsilon} + (\mu - (\phi_0 + \mu -\lambda))({G}^*\backslash{C_0^{\epsilon}}) = (\lambda - \phi_0){G}^*.   
\end{equation*}

There are $\rho-1 = \frac{ef-\epsilon}{\epsilon}$ transversals in the multiset $\Delta(C_0^{\epsilon})$; each corresponding to some cyclotomic class $\alpha^{i} C_0^{\epsilon}$ ($0 \leq i \leq \epsilon-1$) . There are $f-1$ diagonals of transversals in $\rm{Int}({C_0^{\epsilon}}^{\prime})$, each of which is a transversal of $\Delta(C_0^{\epsilon})$,  so the remaining transversals of the multiset $\Delta(C_0^{\epsilon})$ must be contained within $\rm{Ext}({C_0^{\epsilon}}^{\prime})$. There are
\begin{equation*}
    \frac{ef-\epsilon}{\epsilon} - f-1 = \frac{ef-\epsilon - f\epsilon + \epsilon}{\epsilon} = \frac{(e-\epsilon)f}{\epsilon}
\end{equation*}
of these. As ${C_0^{\epsilon}}^{\prime}$ is an EDF, each of the $\epsilon$ classes occurs equally often, i.e. $\frac{(e-\epsilon)f}{\epsilon^2}$ times.  This implies $\phi_0 = \lambda - \frac{(e-\epsilon)f}{\epsilon^2}$ and $\phi_0 + \kappa = \lambda - \frac{(e-\epsilon)f}{\epsilon^2} + \mu - \lambda = \mu - \frac{(e-\epsilon)f}{\epsilon^2}$.

\item[(iv)] Since $\phi_1=\phi_j$ for $2 \leq j \leq \epsilon$, but $\kappa \neq 0$, ${C_0^{\epsilon}}^{\prime}$ forms a  proper $(q,\frac{e}{\epsilon},f,\phi_0,\phi_0+\kappa)$-DPDF, i.e.
\begin{equation}
{\rm Int}(S') = \phi_0{C_0^{\epsilon}} + (\phi_0+\kappa)({G}^*\backslash{C_0^{\epsilon}}).
\end{equation}
Hence
\begin{equation*}
{\rm Ext}({C_0^{\epsilon}}^{\prime}) = (\lambda-\phi_0){C_0^{\epsilon}} + (\mu-(\phi_0+\kappa))({G}^*\backslash{C_0^{\epsilon}}).
\end{equation*}
As $\kappa \neq \mu - \lambda$, we have $\lambda - \phi_0 \neq \mu - (\phi_0 + \kappa)$, so ${C_0^{\epsilon}}^{\prime}$ is a proper $(q,\frac{e}{\epsilon},f,\lambda - \phi_0,\mu-\phi_0-\kappa)$-EPDF.
\end{itemize}
\end{proof}

The following result is immediate; the EDF case covers the results on partitioning the squares in \cite{CheLinLin}.

\begin{corollary}\label{corg5}
Let $\rm{GF}(q)$ be a finite field of order $q=ef+1 \equiv 1 \mod 4$, where $e$ is even.  Denote by $C_0^2$ the set of squares, let ${C_0^2}^{\prime} = \{C_0^e,C_2^e,...,C_{e-2}^e\}$ and let  $\kappa=\phi_1-\phi_0$.  Then
\begin{itemize}
    \item[(i)] when $\kappa = 0$, ${C_0^2}^{\prime}$ forms a $(q,\frac{e}{2},f,\frac{f-1}{2})$-DDF and a $(q,\frac{e}{2},f,\frac{(e-2)f-2}{4},\frac{(e-2)f+2}{4})$-EPDF. 
    \item[(ii)] when $\kappa = 1$, ${C_0^2}^{\prime}$ forms a $(q,\frac{e}{2},f,\frac{(e-2)f}{4})$-EDF and a $(q,\frac{e}{2},f,\frac{f-2}{2},\frac{f}{2})$-DPDF.
    \item[(iii)] when $\kappa \not\in\{0,1\}$ ${C_0^2}^{\prime}$ is a proper $(q,\frac{e}{2},f,\phi_0,\phi_0+\kappa)$-DPDF and a proper $(q,\frac{e}{2},f,\frac{q-5}{4}-\phi_0,\frac{q-1-4\kappa}{4}-\phi_0)$-EPDF. 
\end{itemize}
\end{corollary}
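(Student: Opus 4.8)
The plan is to derive this directly from Theorem~\ref{thm6} in the special case $\epsilon = 2$. First I would record the relevant PDS data: since $q \equiv 1 \bmod 4$, Proposition~\ref{prop1} tells us that $C_0^2$ is a proper $(q,\tfrac{q-1}{2},\tfrac{q-5}{4},\tfrac{q-1}{4})$-PDS, so in the notation of Theorem~\ref{thm6} we have $\lambda = \tfrac{q-5}{4}$ and $\mu = \tfrac{q-1}{4}$. The key arithmetic fact to note at the outset is that $\mu - \lambda = 1$; this is precisely what makes the case $\kappa = \mu - \lambda$ of Theorem~\ref{thm6}(iii) correspond to the value $\kappa = 1$ appearing in part (ii) of the corollary.

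Next I would observe that part (i) of Theorem~\ref{thm6} is vacuous here: when $\epsilon = 2$ the index set $\{1,\dots,\epsilon-1\}$ is the singleton $\{1\}$, so there are no distinct $i,j$ for which $\phi_i \neq \phi_j$ could hold. Consequently, exactly one of the cases (ii), (iii), (iv) of Theorem~\ref{thm6} must apply, and these are governed by whether $\kappa = \phi_1 - \phi_0$ equals $0$, equals $\mu - \lambda$, or lies outside $\{0,\mu-\lambda\}$. Since $\mu - \lambda = 1$, these three alternatives are exactly $\kappa = 0$, $\kappa = 1$, and $\kappa \notin \{0,1\}$, matching the three cases of the corollary verbatim; the $\frac{e}{\epsilon}=\frac{e}{2}$, $f$, $\frac{f-1}{\epsilon}=\frac{f-1}{2}$ and $\frac{(e-\epsilon)f}{\epsilon^2}=\frac{(e-2)f}{4}$ entries carry over immediately from the general statement.

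It then remains only to substitute $\lambda = \tfrac{q-5}{4}$ and $\mu = \tfrac{q-1}{4}$ into the parameter expressions of Theorem~\ref{thm6} and simplify using $q = ef+1$. For (i) I would feed the EPDF frequencies $\lambda - \tfrac{f-1}{2}$ and $\mu - \tfrac{f-1}{2}$ through the identity $\tfrac{q-5}{4} = \tfrac{ef-4}{4}$ to obtain $\tfrac{(e-2)f-2}{4}$ and $\tfrac{(e-2)f+2}{4}$; for (ii) the DPDF frequencies $\lambda - \tfrac{(e-2)f}{4}$ and $\mu - \tfrac{(e-2)f}{4}$ collapse to $\tfrac{f-2}{2}$ and $\tfrac{f}{2}$; and for (iii) I would rewrite $\mu - \phi_0 - \kappa$ as $\tfrac{q-1-4\kappa}{4} - \phi_0$ while $\lambda - \phi_0$ is already in the form $\tfrac{q-5}{4}-\phi_0$. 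None of this is conceptually difficult — the only thing to watch is consistently rewriting $q$ as $ef+1$ so that the $f$-expressions emerge cleanly — so I do not anticipate a genuine obstacle beyond verifying these routine simplifications.
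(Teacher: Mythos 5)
Your proposal is correct and follows exactly the paper's own route: invoke Proposition \ref{prop1} to identify $C_0^2$ as a proper $(q,\tfrac{q-1}{2},\tfrac{q-5}{4},\tfrac{q-1}{4})$-PDS and then specialise Theorem \ref{thm6} to $\epsilon=2$, with the three cases matching because $\mu-\lambda=1$. Your additional remark that case (i) of Theorem \ref{thm6} is vacuous for $\epsilon=2$ is a correct detail the paper leaves implicit, and your parameter simplifications all check out.
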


\begin{proof}
By Proposition \ref{prop1}, ${C_0^2}$ is a $(q,\frac{ef}{2},\frac{q-5}{4},\frac{q-1}{4})$-PDS. Here $\lambda = \frac{q-5}{4}$ and $\mu = \frac{q-1}{4}$. The results then follow from Theorem \ref{thm6} with $\epsilon=2$.
\end{proof}

\begin{example}\label{squares_ex}
Let $\epsilon=2$ and apply Corollary \ref{corg5}.
\begin{itemize}
\item[(i)] Take $q=37$ with $e=4$ and $f=9$.  Then $C_0^4=\{1,7,9,10,12,16,26,33,34\}$, of which $\{10,12,26,34\}$ lie in $\Phi_0$ and $\{7,9,16,33\}$ lie in $\Phi_1$, so $\phi_0=\phi_1=4$, $\kappa=0$ and ${C_0^2}^{\prime}$ is a $(37,2,9,4)$-DDF and a $(37,2,9,4,5)$-EPDF.
\item[(ii)] Take $q=17$ with $e=f=4$; then $\phi_0=1$, $\phi_1=2$, so $\kappa=1$ and ${C_0^2}^{\prime}$ is a $(17,2,4,2)$-EDF and a $(17,2,4,1,2)$-DPDF.
\item[(iii)] Take $q=25$ with $e=6$ and $f=4$; then $\phi_0=3$, $\phi_1=0$, so $\kappa=-3$ and ${C_0^2}^{\prime}$ is a proper $(25,3,4,3,0)$-DPDF and a proper $(25,3,4,2,6)$-EPDF.
\end{itemize}
\end{example}
We observe that EDFs obtained from this process with $f=4$ (eg Example \ref{squares_ex}(ii), (iii)) correspond to those satisfying the parameters of Proposition 21 of \cite{ChaDin}.

Using Proposition \ref{combined} in combination with Corollary \ref{corg5},  we are able to obtain conditions simply in terms of $q$ and $f$ which guarantee that we obtain both a proper DPDF and a proper EPDF.

\begin{theorem}\label{thm8}
Let $q \equiv 1 \mod 4$ where $q=ef+1$ and $e$ even.  Then, if the following conditions hold, ${C_0^2}^{\prime}$ is a proper $(q,\frac{e}{2},f,\phi_0,\phi_1)$-DPDF and a proper $(q,\frac{e}{2},f,\frac{q-5}{4}-\phi_0,\frac{q-1}{4}-\phi_1)$-EPDF:
\begin{itemize}
\item[(i)] $q \equiv 1 \mod 8$ and $f \equiv 2 \mod 4$.
\item[(ii)] $q \equiv 1 \mod 4$, $f \equiv 3 \mod 4$.
\end{itemize}
\end{theorem}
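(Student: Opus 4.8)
The plan is to reduce the statement to Corollary \ref{corg5} and then carry out a parity analysis of the quantities $\phi_0$ and $\phi_1$. Since $q \equiv 1 \mod 4$ with $e$ even, Proposition \ref{prop1} shows that $C_0^2$ is a proper PDS, so Corollary \ref{corg5} applies to ${C_0^2}^{\prime}$ with $\kappa = \phi_1 - \phi_0$. Inspecting that corollary, the DPDF frequencies $\phi_0, \phi_1$ coincide exactly when $\kappa = 0$, while the EPDF frequencies $\frac{q-5}{4}-\phi_0$ and $\frac{q-1}{4}-\phi_1$ coincide exactly when $\kappa = 1$ (indeed $\frac{q-1}{4}-\phi_1 = \frac{q-1-4\kappa}{4}-\phi_0$, matching Corollary \ref{corg5}(iii)). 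Hence ${C_0^2}^{\prime}$ is simultaneously a \emph{proper} DPDF and a \emph{proper} EPDF precisely when $\kappa \notin \{0,1\}$, and the whole proof reduces to establishing $\kappa \neq 0$ and $\kappa \neq 1$ under each hypothesis.

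Two ingredients feed into this. First, a counting identity: since $\{C_0^2, C_1^2\}$ partitions $GF(q)^*$, every $x \in C_0^e$ with $x \neq 1$ has $x-1$ lying in exactly one of $C_0^2, C_1^2$, so $x$ lies in exactly one of $\Phi_0, \Phi_1$; this yields $\phi_0 + \phi_1 = |C_0^e| - 1 = f-1$. Second, because $q \equiv 1 \mod 4$ forces $\rho = \frac{q-1}{2}$ to be even, we are in case (a) of Theorem \ref{combined} with $\epsilon = 2$ (whose hypothesis $\phi_1 = \phi_j$ is vacuous here), which pins down the parities of $\phi_0$ and $\phi_1$ via the $\psi_i$.

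I would dispatch case (ii) first, as it is the cleaner one: $f \equiv 3 \mod 4$ is odd, so Theorem \ref{combined}(a)(i) gives $\phi_0 = 2\psi_0$ and $\phi_1 = 2\psi_1$, both even; thus $\kappa$ is even and $\kappa \neq 1$ is automatic. For $\kappa \neq 0$, note $\phi_0 + \phi_1 = f - 1 \equiv 2 \mod 4$, so $\psi_0 + \psi_1 = \frac{f-1}{2}$ is odd, forcing $\psi_0 \neq \psi_1$ and hence $\kappa = 2(\psi_1 - \psi_0) \neq 0$. For case (i), $f \equiv 2 \mod 4$ is even and $q \equiv 1 \mod 8$, so Theorem \ref{combined}(a)(iii) gives $\phi_0 = 2\psi_0 + 1$ (odd) and $\phi_1 = 2\psi_1$ (even); since $\kappa$ is then odd, $\kappa \neq 0$ is immediate.

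The main obstacle is ruling out $\kappa = 1$ in case (i), where parity alone is insufficient (an odd $\kappa$ could still equal $1$). Here I would combine the two ingredients: if $\kappa = 1$ then $\phi_1 = \phi_0 + 1$, and together with $\phi_0 + \phi_1 = f-1$ this forces $\phi_0 = \frac{f-2}{2}$ and $\phi_1 = \frac{f}{2}$. But $f \equiv 2 \mod 4$ makes $\frac{f-2}{2}$ even and $\frac{f}{2}$ odd, contradicting the parities $\phi_0$ odd, $\phi_1$ even established above. This is the only place where the sharper congruence $f \equiv 2 \mod 4$ (rather than merely $f$ even) is used. With $\kappa \notin \{0,1\}$ secured in both cases, Corollary \ref{corg5}(iii) delivers the stated proper DPDF and proper EPDF, completing the argument.
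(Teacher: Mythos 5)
Your proposal is correct and follows essentially the same route as the paper: reduce to Corollary \ref{corg5}(iii) by showing $\kappa\notin\{0,1\}$, using Theorem \ref{combined}(a) to fix the parities of $\phi_0,\phi_1$ via the $\psi_i$ and the count of values $1\le r<\frac{f}{2}$ (your identity $\phi_0+\phi_1=f-1$ is equivalent to the paper's $\psi_0+\psi_1=\frac{f-2}{2}$, and your parity contradiction ruling out $\kappa=1$ in case (i) is a repackaging of the paper's three-case analysis of $2\psi_1-2\psi_0$). The only difference is cosmetic: you write out case (ii) in full, which the paper dismisses as ``similar to the first.''
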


\begin{proof}
\begin{itemize} 
\item[(i)] Since $f$ is even, there are precisely $\frac{f-2}{2}$ values of $1 \leq r < \frac{f}{2}$.  As $f \equiv 2 \mod 4$, this implies $\frac{f-2}{2}$ is even. Since $\psi_0 + \psi_1 = \frac{f-2}{2}$, this implies that $\psi_0$ and $\psi_1$ are either equal or have the same parity. This gives us three cases for the relationship between $\psi_0$ and $\psi_1$; $2\psi_1 = 2\psi_0$, $2\psi_1 -2\psi_0 \geq 4$ or $2\psi_1 -2\psi_0 \leq -4$.\\
We show that in all cases $\kappa \not \in \{0,1\}$ and apply Corollary \ref{corg5}. By Proposition \ref{combined}, when $q \equiv 1 \mod 8$, we have $\phi_0 = 2\psi_0 + 1$ and $\phi_1 = 2\psi_1$.  When $2\psi_1 = 2\psi_0$, we have $\kappa =\phi_1-\phi_0 = -1$.  When $2\psi_1 - 2\psi_0 \geq 4$, this can be rewritten $\phi_1 - (\phi_0-1) \geq 4$, which implies $\kappa \geq 3$.   When $2\psi_1 - 2\psi_0 \leq -4$, this can be rewritten $\phi_1 - \phi_0 + 1 \leq -4$, which implies $\kappa \leq -5$. 
\item[(ii)] This case is similar to the first.
\end{itemize}
\end{proof}

We next present corollaries of Theorem \ref{thm6} for the set of cubes and fourth powers.  The $\epsilon=4$ case for EDFs covers results in \cite{CheLinLin}.
\begin{corollary}\label{corg6}
Let $\rm{GF}(q)$ be a finite field of odd order $q=ef+1 \equiv 1 \mod 3$, where $3|e$. Suppose that  the unique proper representation of $q$, given by $4q = c^2 + 27d^2$ ($c \equiv 1 \mod 3$), has $d=0$.  Let ${C_0^3}^{\prime} = \{C_0^e,C_3^e,\ldots,C_{e-3}^e\}$ and let $\kappa=\phi_1-\phi_0$. If $\phi_1=\phi_2$, then
\begin{itemize}
\item[(i)] when $\kappa=0$, ${C_0^3}^{\prime}$ forms a $(q,\frac{e}{3},f,\frac{f-1}{3})$-DDF and a $(q,\frac{e}{3},f,\frac{(e-3)f-4+c}{9},\frac{2f(e-3)+4-c}{18})$-EPDF.
\item[(ii)] when $\kappa = \frac{4-c}{6}$, ${C_0^3}^{\prime}$ forms a $(q,\frac{e}{3},f,\frac{(e-3)f}{9})$-EDF and a $(q,\frac{e}{3},f,\frac{3f-7+c}{9},\frac{6f-2-c}{18})$-DPDF.
\item[(iii)] when $\kappa \not\in \{0,\frac{4-c}{9}\}$, ${C_0^3}^{\prime}$ forms a proper $(q,\frac{e}{3},f,\phi_0,\phi_0+\kappa)$-DPDF and a proper $(q,\frac{e}{3},f,\frac{q-8+c}{9}-\phi_0,\frac{2q-4-c-18 \kappa}{18}-\phi_0)$-EPDF.
\end{itemize}
\end{corollary}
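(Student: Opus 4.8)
The plan is to mirror the proof of Corollary~\ref{corg5}, reducing the entire statement to a single application of Theorem~\ref{thm6} with $\epsilon = 3$. First I would invoke Theorem~\ref{PDSparams}(i): since the proper representation $4q = c^2 + 27d^2$ has $d = 0$, the class $C_0^3$ is a proper $(q, \frac{q-1}{3}, \frac{q-8+c}{9}, \frac{2q-4-c}{18})$-PDS. This fixes the PDS frequencies as $\lambda = \frac{q-8+c}{9}$ and $\mu = \frac{2q-4-c}{18}$, placing us exactly in the hypotheses of Theorem~\ref{thm6}.

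Second, I would compute the pivotal quantity $\mu - \lambda$, since the branching in Theorem~\ref{thm6} is governed entirely by whether $\kappa$ equals $0$, equals $\mu - \lambda$, or lies outside $\{0, \mu-\lambda\}$. A short calculation gives $\mu - \lambda = \frac{2q-4-c}{18} - \frac{q-8+c}{9} = \frac{4-c}{6}$, which is precisely the value appearing in part (ii) of the corollary. I would flag here that, for internal consistency, the excluded set in part (iii) ought to read $\{0, \frac{4-c}{6}\}$; the displayed $\frac{4-c}{9}$ appears to be a transcription slip, since the correct threshold is $\mu - \lambda$.

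Third, the hypothesis $\phi_1 = \phi_2$ is exactly what is needed to rule out case (i) of Theorem~\ref{thm6} (the ``neither DPDF nor EPDF'' case, which requires $\phi_i \neq \phi_j$ for some distinct $i,j$); with $\epsilon = 3$ the only relevant pair is $\{1,2\}$, so $\phi_1 = \phi_2$ secures $\phi_1 = \cdots = \phi_{\epsilon-1}$ trivially. Having established this, the three alternatives $\kappa = 0$, $\kappa = \frac{4-c}{6}$, and $\kappa \notin \{0, \frac{4-c}{6}\}$ map directly onto parts (ii), (iii), (iv) of Theorem~\ref{thm6}, yielding respectively the DDF/EPDF, EDF/DPDF, and proper-DPDF/proper-EPDF conclusions asserted in parts (i), (ii), (iii) of the corollary.

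Finally, all that remains is to rewrite the abstract parameter expressions of Theorem~\ref{thm6} in the closed forms stated here, by substituting the values of $\lambda, \mu$ and repeatedly using $q - 1 = ef$ to interchange $q$ and $(e-3)f$. For instance, in the $\kappa = 0$ case the EPDF frequency $\lambda - \frac{f-1}{3}$ becomes $\frac{q-3f-5+c}{9} = \frac{(e-3)f-4+c}{9}$ after clearing denominators and applying $q = ef+1$, and the remaining five frequency expressions simplify by identical manipulations. The main (and essentially only) obstacle is bookkeeping: ensuring the $q \leftrightarrow ef$ substitutions are performed consistently across all six parameters so that the stated closed forms emerge. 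There is no substantive mathematical difficulty beyond the reductions already packaged in Theorems~\ref{PDSparams} and~\ref{thm6}.
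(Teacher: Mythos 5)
Your proposal is correct and follows exactly the paper's own route: invoke Theorem \ref{PDSparams}(i) to identify $C_0^3$ as a $(q,\frac{q-1}{3},\frac{q-8+c}{9},\frac{2q-4-c}{18})$-PDS and then apply Theorem \ref{thm6} with $\epsilon=3$, the three cases being governed by $\kappa$ versus $\mu-\lambda=\frac{4-c}{6}$. Your observation that the exclusion set in part (iii) should read $\{0,\frac{4-c}{6}\}$ rather than $\{0,\frac{4-c}{9}\}$ is a correct catch of a typographical slip in the statement.
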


\begin{proof}
By Theorem \ref{PDSparams}, ${C_0^3}$ is a $(q,\frac{ef}{3},f,\frac{q-8+c}{9},\frac{2q-4-c}{18})$-PDS.  Here $\lambda = \frac{q-8+c}{9}$ and $\mu = \frac{2q-4-c}{18}$. The result follows immediately from Theorem \ref{thm6} with $\epsilon=3$.
\end{proof}

\begin{example}\label{Cubes_25}
\begin{itemize}
\item[(i)]
Let $q=25$ where $e=12$, $f=2$ and $\epsilon=3$.  Since $4q=100=10^2+0.d^2$ with $10 \equiv 1 \mod 3$, we have $c=10$ and ${C_0^3}$ is a $(25,8,3,2)$-PDS.  Here $\phi_0=1$ while $\phi_1=\phi_2=0$, so $\kappa=-1=\frac{4-c}{6}$.  By Corollary \ref{corg6}(ii), ${C_0^3}^{\prime}$ forms a $(25,4,2,1,0)$-DPDF and a $(25,4,2,2)$-EDF.  
\item[(ii)]
Let $q=64$ where $e=9$, $f=7$ and $\epsilon=3$.  Since $4q=256=16^2+0d^2$ with $16 \equiv 1 \mod 3$, we have $c=16$ and ${C_0^3}$ is a $(64,21,8,6)$-PDS.  Then ${C_0^3}^{\prime}$ forms a  $(64,3,7,6,0)$-DPDF and a $(64,3,7,2,6)$-EPDF. 
\end{itemize}
\end{example}

\begin{corollary}\label{corg7}
Let $\rm{GF}(q)$ be a finite field of order $q=p^m=ef+1$ where $4|e$.  Suppose that $q= s^2$ is the proper representation of $q$, where $m$ is even, $p \equiv 3 \mod 4$ and $s = (-p)^{\frac{m}{2}}$. Let ${C_0^4}^{\prime}= \{C_0^e,C_4^e,\ldots,C_{e-4}^e\}$ and let $\kappa = \phi_1-\phi_0$.  If $\phi_1=\phi_2=\phi_3$, then 
\begin{itemize}
    \item[(i)] when $\kappa=0$, ${C_0^4}^{\prime}$ forms a $(q,\frac{e}{4},f,\frac{f-1}{4})$-DDF and a $(q,\frac{e}{4},f,\frac{(e-4)f-6-6s}{16},\frac{(e-4)f+2+2s}{16})$-EPDF.
    \item[(ii)] when $\kappa=\frac{1+s}{2}$, ${C_0^4}^{\prime}$ forms a $(q,\frac{e}{4},f,\frac{(e-4)f}{16})$-EDF and a $(q,\frac{e}{4},f,\frac{4f-10-6s}{16},\frac{4f-2+2s}{16})$-DPDF.
    \item[(iii)] when $\kappa \neq \{0,\frac{1+s}{2}\}$, then ${C_0^4}^{\prime}$ forms a proper $(q,\frac{e}{4},f,\phi_0,\phi_0+\kappa)$-DPDF and a proper $(q,\frac{e}{4},f,\frac{q-11-6s}{16}-\phi_0,\frac{q-3+2s-16 \kappa}{16}-\phi_0)-EPDF$.
\end{itemize}
\end{corollary}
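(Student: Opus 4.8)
The plan is to obtain this as a direct specialization of the partition theorem (Theorem \ref{thm6}) with $\epsilon = 4$, once the PDS parameters of $C_0^4$ are pinned down. First I would invoke Theorem \ref{PDSparams}(ii): the hypotheses $q = p^m$ with $p \equiv 3 \mod 4$ and $m$ even are precisely those under which the order-$4$ cyclotomic class $C_0^4$ is a proper $(q, \frac{q-1}{4}, \frac{q-11-6s}{16}, \frac{q-3+2s}{16})$-PDS, corresponding to the degenerate proper representation $q = s^2$ (i.e. $t = 0$). One small consistency check is needed here: Theorem \ref{PDSparams}(ii) normalizes $s \equiv 1 \mod 4$, and since $p \equiv 3 \mod 4$ forces $-p \equiv 1 \mod 4$, the choice $s = (-p)^{m/2}$ indeed satisfies $s \equiv 1 \mod 4$, so the displayed parameter formulas apply verbatim with this $s$.

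With $\lambda = \frac{q-11-6s}{16}$ and $\mu = \frac{q-3+2s}{16}$ in hand, I would compute $\mu - \lambda = \frac{8+8s}{16} = \frac{1+s}{2}$. This is the key observation: it identifies the threshold value $\mu - \lambda$ appearing in Theorem \ref{thm6} with the quantity $\frac{1+s}{2}$ occurring in the statement, so that the corollary's three cases $\kappa = 0$, $\kappa = \frac{1+s}{2}$, and $\kappa \notin \{0, \frac{1+s}{2}\}$ match up with cases (ii), (iii), (iv) of Theorem \ref{thm6} respectively. The standing hypothesis $\phi_1 = \phi_2 = \phi_3$ is exactly the condition $\phi_1 = \cdots = \phi_{\epsilon-1}$ for $\epsilon = 4$, which rules out case (i) of Theorem \ref{thm6} and licenses the application.

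It then remains only to substitute and simplify, using the identity $q = ef + 1$ to reconcile the two natural reductions $q - 4f = (e-4)f + 1$ and $q - (e-4)f = 4f + 1$. For $\kappa = 0$, Theorem \ref{thm6}(ii) yields the DDF parameter $\frac{f-1}{4}$ and EPDF parameters $\lambda - \frac{f-1}{4}, \mu - \frac{f-1}{4}$, which collapse to $\frac{(e-4)f-6-6s}{16}$ and $\frac{(e-4)f+2+2s}{16}$; for $\kappa = \frac{1+s}{2} = \mu - \lambda$, Theorem \ref{thm6}(iii) gives the EDF parameter $\frac{(e-4)f}{16}$ and DPDF parameters reducing to $\frac{4f-10-6s}{16}$ and $\frac{4f-2+2s}{16}$; and for the generic $\kappa$, Theorem \ref{thm6}(iv) gives the proper DPDF $(\phi_0, \phi_0 + \kappa)$ and proper EPDF $(\lambda - \phi_0, \mu - \phi_0 - \kappa)$, where rewriting $\mu - \phi_0 - \kappa = \frac{q-3+2s-16\kappa}{16} - \phi_0$ recovers the stated form. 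I do not expect any genuine obstacle here, since the result is a direct corollary; the only care required is the routine arithmetic bookkeeping in these substitutions and the verification that the sign convention on $s$ is internally consistent between Theorem \ref{PDSparams} and the present statement.
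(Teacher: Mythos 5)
Your proposal is correct and follows exactly the paper's own route: the paper's proof likewise reads off the PDS parameters $\lambda=\frac{q-11-6s}{16}$, $\mu=\frac{q-3+2s}{16}$ from Theorem \ref{PDSparams} and then applies Theorem \ref{thm6} with $\epsilon=4$. Your additional checks (that $\mu-\lambda=\frac{1+s}{2}$ identifies the case threshold, and that $s=(-p)^{m/2}\equiv 1 \bmod 4$ matches the normalization in Theorem \ref{PDSparams}) are exactly the bookkeeping the paper leaves implicit, and your parameter simplifications all verify.
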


\begin{proof}
 By Theorem \ref{PDSparams}, $C_0^4$ is a $(q,\frac{ef}{4},\frac{q-11-6s}{16},\frac{q-3+2s}{16})$-PDS.
Here $\lambda=\frac{q-11-6s}{16}$ and $\mu=\frac{q-3+2s}{16}$.  The result follows directly from Theorem \ref{thm6} with $\epsilon=4$.
\end{proof}

\begin{example}\label{ex:49}
Take $\epsilon=4$ and let $q=49=7^2$ where $7 \equiv 3 \mod 4$ and $s=-7$ (so $\frac{1+s}{2}=-3$).  ${C_0^4}$ is a $(49,12,5,2)$-PDS.  We apply Corollary \ref{corg7} with various possible values of $e$ divisible by $4$.  Let $\gamma$ be a root of the irreducible polynomial $x^2+4$ over $GF(7)$ and note that $\alpha=\gamma+1$ is a primitive element of $GF(49)$.
\begin{itemize}
\item[(i)] $e=8$ and $f=6$.  Here $C_0^8=\{1, \alpha^8, \alpha^{16}, \alpha^{24}, \alpha^{32}, \alpha^{40} \}=\{1,2,3,4,5,6\} \cong GF(7)^*$.  Here $\phi_0=5$ and $\phi_1=\phi_2=\phi_3=0$, so $\kappa=-5$ and ${C_0^4}^{\prime}$ forms a proper $(49,2,6,5,0)$-DPDF and a proper $(49,2,6,0,2)$-EPDF.  Note this is an example where $C_0^e$ is itself a PDS, formed by removing $0$ from a subfield of $GF(49)$.
\item[(ii)] $e=12$ and $f=4$.  Here $C_0^{12}=\{1, \alpha^{12}, \alpha^{24}, \alpha^{36} \}=\{ 1, 3 \gamma, -1, 4 \gamma\}$.  To satisfy Corollary \ref{corg7}, since $f-1=3$, we would need either $\phi_0=3$ and $\phi_1=\phi_2=\phi_3=0$, or $\phi_0=0$ and $\phi_1=\phi_2=\phi_3=1$.  However $3\gamma-1=\alpha^{10}$ i.e. $3\gamma \in \Phi_2$ whereas $-1-1=-2=\alpha^8$ i.e. $-1 \in \Phi_0$ so neither of these occurs.
\item[(iii)] $e=16$ and $f=3$.  Here $C_0^{16}=\{1, \alpha^{16}, \alpha^{32}\}=\{1,4,2\}$.  Here $4-1=3=\gamma^{40}$ and $2-1=1$ so both elements $2$ and $4$ are in $\Phi_0$, hence $\phi_0=2$ and $\phi_1=\phi_2=\phi_3=0$.  Then $\kappa=-2$ and so ${C_0^4}^{\prime}$ forms a proper $(49,4,3,2,0)$-DPDF and a proper $(49,4,3,3,2)$-EPDF.
\item[(iv)] $e=24$ and $f=2$.  Here $C_0^{24}=\{1,-1\}$ and $-1 \in \Phi_0$ so $\phi_0=1$ and $\phi_1=\phi_2=\phi_3=0$.  Then $\kappa=-1$ and ${C_0^4}^{\prime}$ is a proper $(49,6,2,1,0)$-DPDF and a proper $(49,6,2,4,2)$-EPDF.
\end{itemize}
\end{example}

Next, we present the following general result on the partition construction, based on a similar argument to that of Theorem \ref{eandf} for PDSs. 

\begin{theorem}\label{eandfDPDF}
Let ${C_0^{\epsilon}}^{\prime} = \{C_0^e,C_{\epsilon}^e,\ldots,C_{e-\epsilon}^e \}$ be a $(q, \frac{e}{\epsilon}, f, \phi_0, \phi_1)$-DPDF. 
\begin{itemize}
\item[(i)] If $\epsilon>f$, then $\phi_0=f-1$ and $\phi_1=0$, i.e. ${C_0^{\epsilon}}^{\prime}$ is a $(q,\frac{e}{\epsilon}, f, f-1, 0)$-DPDF.
\item[(ii)] If $\epsilon>2$ and $\epsilon=f$ then ${C_0^{\epsilon}}^{\prime}$ is a $(q,\frac{e}{\epsilon}, f, f-1, 0)$-DPDF.
\item[(iii)] If $C_0^{\epsilon}$ is a $(q,\rho,\lambda,\mu)$-PDS, then in both cases (i) and (ii), ${C_0^{\epsilon}}^{\prime}$ is a $(q, \frac{e}{\epsilon}, f, \lambda-f+1,\mu)$-EPDF, which is proper unless $\mu-\lambda=f-1$.
\end{itemize}
\end{theorem}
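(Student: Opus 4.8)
The plan is to mirror the counting argument of Theorem \ref{eandf}, with $\mathrm{Int}({C_0^{\epsilon}}^{\prime})$ playing the role of $\Delta(C_0^e)$ and the diagonal-counts $\phi_i$ replacing the PDS parameters. First I would record the basic identity. By Proposition \ref{prop1CD}, $\mathrm{Int}({C_0^{\epsilon}}^{\prime}) = \bigcup_{r=1}^{f-1} D_r$ is a union of exactly $f-1$ diagonals, each a single cyclotomic class $C_i^{\epsilon}$; since $\phi_i$ counts the diagonals equal to $\alpha^i C_0^{\epsilon}$, summing over $r$ gives $\sum_{i=0}^{\epsilon-1}\phi_i = f-1$. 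As ${C_0^{\epsilon}}^{\prime}$ is a DPDF, Theorem \ref{prop2CD} forces $\phi_1 = \cdots = \phi_{\epsilon-1}$, yielding the key identity $f-1 = \phi_0 + (\epsilon-1)\phi_1$, the exact analogue of Equation (\ref{multi}).

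For part (i), with $\epsilon > f$ we have $\epsilon-1 \geq f > f-1$. Since $\phi_0,\phi_1$ are non-negative integers and $\phi_1(\epsilon-1) \leq f-1 < \epsilon-1$, any $\phi_1 \geq 1$ would give $\phi_1(\epsilon-1) \geq \epsilon-1 > f-1$, a contradiction. Hence $\phi_1 = 0$ and $\phi_0 = f-1$, which is the claimed DPDF.

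For part (ii), with $\epsilon = f > 2$ the identity becomes $f-1 = \phi_0 + (f-1)\phi_1$, so $\phi_1 \in \{0,1\}$, and $\phi_1 = 1$ would force $\phi_0 = 0$. Ruling out this case is the substance of the argument and the main obstacle, exactly as in Theorem \ref{eandf}(iii). My plan is: such a DPDF is proper, so when $\epsilon$ is even, Corollary \ref{combined_cor}(b)(ii) (applicable since $\epsilon < e$) excludes $q \equiv \epsilon+1 \bmod 2\epsilon$ and thus forces $q \equiv 1 \bmod 2\epsilon$; combining this with Lemma \ref{lem4CD}(a)(i) — and, when $\epsilon$ is odd, part (b) of that lemma (noting that $q$ even forces $\epsilon$ odd) — gives $-1 \in C_0^{\epsilon}$ in every case. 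Then $-C_0^{\epsilon} = C_0^{\epsilon}$, so Proposition \ref{lem5CD}(ii) yields $D_{f-r} = -D_r = D_r$, i.e.\ the involution $r \mapsto f-r$ pairs \emph{equal} diagonals. If $f$ is odd the $f-1$ diagonals split into $(f-1)/2$ such pairs, so every class occurs with even multiplicity, contradicting $\phi_1 = 1$; if $f$ is even only the central diagonal $D_{f/2}$ is unpaired, so at most one class can occur with odd multiplicity, contradicting the requirement that the $\epsilon-1 \geq 3$ classes $C_1^{\epsilon}, \ldots, C_{\epsilon-1}^{\epsilon}$ each occur once. Either way $\phi_1 = 1$ is impossible, forcing $\phi_1 = 0$ and $\phi_0 = f-1$.

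Finally, for part (iii), since $C_0^{\epsilon}$ is a $(q,\rho,\lambda,\mu)$-PDS partitioned by ${C_0^{\epsilon}}^{\prime}$, which in cases (i) and (ii) is a $(q,\tfrac{e}{\epsilon},f,f-1,0)$-DPDF, Theorem \ref{thm3}(ii) converts this directly into a $(q,\tfrac{e}{\epsilon},f,\lambda-f+1,\mu)$-EPDF (equivalently, subtract $\mathrm{Int}({C_0^{\epsilon}}^{\prime})$ from $\Delta(C_0^{\epsilon})$ using Remark \ref{rem1}). This EPDF is proper precisely when its two frequencies $\lambda-f+1$ and $\mu$ do not coincide. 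I expect the parity/pairing argument of part (ii) to be the only delicate point; the remainder is routine counting.
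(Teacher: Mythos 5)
Your proof is correct and follows essentially the same route as the paper's: the counting identity $f-1=\phi_0+(\epsilon-1)\phi_1$, integrality for (i), a parity argument built on the pairing $D_{f-r}=-D_r$ to exclude the case $\phi_1=1$ in (ii), and the PDS-partition theorems for (iii). The only differences are presentational: where the paper cites Theorem \ref{combined} for the evenness of $\phi_1$ and rules out $q\equiv\epsilon+1\pmod{2\epsilon}$ by the elementary observation that $\rho$ odd forces $f$ odd (so $f\neq\epsilon$ when $\epsilon$ is even), you re-derive the same pairing argument directly from Lemma \ref{lem4CD} and Proposition \ref{lem5CD} and reach the congruence via Corollary \ref{combined_cor}(b)(ii) — both are valid.
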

\begin{proof}
Similarly to the proof of Theorem \ref{eandf}, the equation $f-1=\phi_0+(\epsilon-1)\phi_1$ holds for ${C_0^{\epsilon}}^{\prime}$.  Note that $f, \epsilon, \phi_0$ and $\phi_1$ are all non-negative integers.  For (i), when $\epsilon>f$, we must have $\phi_1=0$ and hence $\phi_0=f-1$.  For (ii), when $\epsilon=f$, there are two cases: $\phi_1=0$ and $\phi_0=f-1$ or $\phi_1=1$ and $\phi_0=0$.  By Theorem \ref{combined}, when $q \equiv 1 \mod 2 \epsilon$ and $f$ even ($\epsilon>2$) or $f$ odd, and when $q \equiv \epsilon+1 \mod 2 \epsilon$ and $\epsilon$ odd, we have $\phi_1$ even and so the second case cannot occur.  We cannot have $q \equiv \epsilon+1 \mod 2 \epsilon$ (i.e. $\rho$ odd) and $\epsilon$ even, as in this case $f$ is odd so cannot equal $\epsilon$.  Hence only the first case is possible, unless $\epsilon=f=2$.  Part (iii) is immediate from Theorem \ref{thm6}.
\end{proof}
Illustrations of  Theorem \ref{eandfDPDF} (i) may be found in Examples \ref{Cubes_25} and \ref{ex:49}.

Finally, in the situation when $\epsilon=f=2$, the following explicit result can be obtained.   This gives examples of DPDFs corresponding to both possible cases in the proof of Theorem \ref{eandfDPDF}(ii).  The EDF result for  $q \equiv 5 \mod 8$ covers the corresponding case in Lemma 19 of \cite{ChaDin}.

\begin{theorem}\label{thm7}
Let $q=ef+1 \equiv 1 \mod 4$ with $e$ even and $f=2$.  Then;
\begin{itemize}
\item[(i)] when $q \equiv 1 \mod 8$, ${C_0^2}^{\prime}$ forms a $(q,\frac{q-1}{4},2,1,0)$-DPDF and a $(q,\frac{q-1}{4},2,\frac{q-9}{4},\frac{q-1}{4})$-EPDF.
\item[(ii)] when $q \equiv 5 \mod 8$, ${C_0^2}^{\prime}$ forms a $(q,\frac{q-1}{4},2,0,1)$-DPDF and a $(q,\frac{q-1}{4},2,\frac{q-5}{4})$-EDF.
\end{itemize}
\end{theorem}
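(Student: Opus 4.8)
The plan is to reduce the entire statement to Corollary~\ref{corg5} by computing the single quantity $\kappa = \phi_1 - \phi_0$ in the degenerate case $\epsilon = f = 2$. First I would record the standing setup. Since $q = 2e+1 \equiv 1 \mod 4$, Proposition~\ref{prop1} tells us $C_0^2$ is a proper $(q,\frac{q-1}{2},\frac{q-5}{4},\frac{q-1}{4})$-PDS, and ${C_0^2}^{\prime}$ partitions it into the $\frac{e}{2} = \frac{q-1}{4}$ cyclotomic classes of order $e$ lying among the squares. So Corollary~\ref{corg5} applies with $\lambda = \frac{q-5}{4}$ and $\mu = \frac{q-1}{4}$, and the whole problem collapses to pinning down $\kappa$.

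The key observation is that $f=2$ forces $C_0^e = \langle \alpha^e \rangle$ to be the unique subgroup of order $2$ in $GF(q)^*$, namely $\{1,-1\}$, so $\alpha^e = -1$. Consequently ${\rm Int}({C_0^2}^{\prime})$ consists of the single diagonal $D_1 = D_{f/2} = (\alpha^e-1)C_0^2 = (-2)C_0^2$ (this is Proposition~\ref{diagonals}(b)), and the defining relation $f-1 = \phi_0 + \phi_1$ for $\epsilon=2$ gives $\phi_0 + \phi_1 = 1$. Hence exactly one of $\phi_0,\phi_1$ equals $1$, according to whether $-2$ is a square or a non-square. I would read this off directly from Lemma~\ref{central}: for $q \equiv 1 \mod 8$ we get $D_1 = C_0^2$, so $\phi_0 = 1$, $\phi_1 = 0$ and $\kappa = -1$; for $q \equiv 5 \mod 8$ we get $D_1 = C_1^2$, so $\phi_0 = 0$, $\phi_1 = 1$ and $\kappa = 1$.

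It then remains to substitute into the appropriate case of Corollary~\ref{corg5}. When $q \equiv 1 \mod 8$, $\kappa = -1 \notin \{0,1\}$, so part (iii) produces a proper $(q,\frac{e}{2},2,\phi_0,\phi_0+\kappa) = (q,\frac{q-1}{4},2,1,0)$-DPDF and a proper $(q,\frac{e}{2},2,\frac{q-5}{4}-\phi_0,\frac{q-1-4\kappa}{4}-\phi_0)$-EPDF; plugging in $\phi_0 = 1$, $\kappa = -1$ gives the advertised $(q,\frac{q-1}{4},2,\frac{q-9}{4},\frac{q-1}{4})$-EPDF. When $q \equiv 5 \mod 8$, $\kappa = 1$, so part (ii) gives a $(q,\frac{e}{2},2,\frac{f-2}{2},\frac{f}{2}) = (q,\frac{q-1}{4},2,0,1)$-DPDF together with a $(q,\frac{e}{2},2,\frac{(e-2)f}{4})$-EDF, where $\frac{(e-2)\cdot 2}{4} = \frac{e-2}{2} = \frac{q-5}{4}$ matches the claimed parameter.

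I expect no genuine obstacle: the content is concentrated in identifying $\kappa$, which the $f=2$ collapse $C_0^e = \{1,-1\}$ makes transparent, the only arithmetic input being the classical fact (packaged in Lemma~\ref{central}) that $2$—and hence $-2$, since $-1$ is already a square when $q \equiv 1 \mod 4$—is a square precisely when $q \equiv \pm 1 \mod 8$. The residual work is the bookkeeping of substitution into Corollary~\ref{corg5} and verifying that the two frequencies differ, so that the DPDF and EPDF in case (i) are proper while the external object in case (ii) is a genuine EDF.
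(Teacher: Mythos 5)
Your proposal is correct and follows essentially the same route as the paper: both arguments hinge on observing that for $f=2$ the multiset ${\rm Int}({C_0^2}^{\prime})$ reduces to the single diagonal $D_{\frac{f}{2}}=(-2)C_0^2$, whose quadratic character is read off from Lemma~\ref{central}, after which the external parameters follow by subtraction from the PDS identity. The only (cosmetic) difference is that you delegate the final bookkeeping to Corollary~\ref{corg5} via $\kappa=\phi_1-\phi_0$, whereas the paper subtracts directly from the Remark~\ref{rem1} equation; the computation is the same.
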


\begin{proof}
By Proposition \ref{prop1}, when $q \equiv 1 \mod 4$, $C_0^2$ forms a $(q,\frac{q-2}{2},\frac{q-5}{4},\frac{q-1}{4})$-PDS. By Remark \ref{rem1}
\begin{equation}
{\rm Int}({C_0^2}^{\prime}) + {\rm Ext}({C_0^2}^{\prime}) = \frac{q-5}{4}(C_0^2) + \frac{q-1}{4}(C_1^2).   
\end{equation}
When $f=2$, ${\rm Int}({C_0^2}^{\prime}) = D_1 = (\alpha^e-1)S$, i.e. ${\rm Int}({C_0^2}^{\prime}) = D_{\frac{f}{2}}$.

For part (i), by Lemma \ref{central}, when $q \equiv 1 \mod 8$ we have $D_{\frac{f}{2}}=C_0^2$, hence $\rm{Int}({C_0^2}^{\prime}) = C_0^2$. So ${C_0^2}^{\prime}$ forms a $(q,\frac{q-1}{4},2,1,0)$-DPDF. Moreover, 
${\rm Ext}({C_0^2}^{\prime}) = \left(\frac{q-9}{4}\right)(C_0^2) + \left(\frac{q-1}{4}\right)(C_1^2)$ so that ${C_0^2}^{\prime}$ is also an $(q,\frac{q-1}{4},2,\frac{q-9}{4},\frac{q-1}{4})$-EPDF.  Part (ii) can be proved similarly.
\end{proof}

\begin{example}\label{eg3}
\begin{itemize}
\item[(i)]
In \rm{GF}$(25)$, if $f=2$, then $e=\frac{q-1}{2}=12$. Then $C_0^e = \langle\alpha^{12}\rangle$ and ${C_0^2}^{\prime}$ is a $(25,6,2,1,0)$-DPDF and a $(25,6,2,4,6)$-EPDF, since $25 \equiv 1 \mod 8$.
\item[(ii)]
In
 \rm{GF}$(13)$, if $f=2$, then this implies that $e=\frac{q-1}{2}=6$. Then $C_0^e = \langle\alpha^{6}\rangle$ and ${C_0^2}^{\prime}$ is a $(13,3,2,0,1)$-DPDF and a $(13,3,2,2)$-EDF, since $13 \equiv 5 \mod 8$.
\end{itemize}
\end{example}

\subsection{Explicit results for cyclotomic DPDFs and EDPFs which partition the squares}

In this section, we obtain explicit information about the DPDFs and EPDFs constructed using $\epsilon=2$ in Theorem \ref{thm6} using cyclotomic numbers for certain values of even $e$. Necessary cyclotomic results are given in the Appendix.

The EDF part of the following result corresponds to that of Theorem 3.2 of \cite{HuaWu}.
\begin{theorem}
Let $q$ be an odd prime power of the form $q=4f+1$.  Let $s$ be defined as it is in Theorem 7.2. Then,
\begin{itemize}
\item[(i)] Suppose $f$ is even. Then ${C_0^2}^{\prime}$ is a $(q, 2,\frac{q-1}{4}, \frac{q-7-2s}{8}, \frac{q-3+2s}{8})$-DPDF and a $(q,2,\frac{q-1}{4}, \frac{q-3+2s}{8}, \frac{q+1-2s}{8})$-EPDF which are both proper except for when $s=1$, in which case ${C_0^2}^{\prime}$ is a $(q,2,\frac{q-1}{4}, \frac{q-1}{8})$-EDF.

\item[(ii)]  Suppose $f$ is odd. Then ${C_0^2}^{\prime}$ is a $(q,2,\frac{q-1}{4}, \frac{q-7+2s}{8}, \frac{q-3-2s}{8})$-DPDF and a $(q,2,\frac{q-1}{4}, \frac{q-3-2s}{8}, \frac{q+1+2s}{8})$-EPDF which are both proper except when $s=1$, in which case ${C_0^2}^{\prime}$ is a $(q,2,\frac{q-1}{4}, \frac{q-5}{8})$-DDF.
\end{itemize}
\end{theorem}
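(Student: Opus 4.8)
The plan is to recognize this as the $\epsilon=2$, $e=4$ instance of the partition framework and to reduce everything to a single cyclotomic computation. Since $q=4f+1$ we have $e=4$, and Lemma~\ref{union} gives $C_0^2=C_0^4\cup C_2^4$, so ${C_0^2}^{\prime}=\{C_0^4,C_2^4\}$ partitions the set of squares $C_0^2$. As $q\equiv 1\bmod 4$, Proposition~\ref{prop1} tells us $C_0^2$ is a $(q,\tfrac{q-1}{2},\tfrac{q-5}{4},\tfrac{q-1}{4})$-PDS, so Corollary~\ref{corg5} (equivalently Theorem~\ref{thm6} with $\epsilon=2$) applies and the whole classification is governed by $\kappa=\phi_1-\phi_0$. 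Thus it suffices to evaluate $\phi_0$ and $\phi_1$ explicitly; note that with $\epsilon=2$ there is no condition $\phi_1=\phi_2=\cdots$ to verify, since it is vacuous.

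First I would compute the $\phi_j$ via Proposition~\ref{thmg4CD}, which for $\epsilon=2$, $e=4$ gives $\phi_0=(0,0)_4+(2,0)_4$ and $\phi_1=(1,0)_4+(3,0)_4$. Substituting the known order-$4$ cyclotomic numbers (listed in the Appendix), split according to the parity of $f$ --- which is exactly what decides whether $-1\in C_0^4$ --- yields $\phi_0$ directly; the companion value follows either by a second substitution or, more cleanly, from the count $\phi_0+\phi_1=f-1$ of diagonals of transversals in ${\rm Int}({C_0^2}^{\prime})$. Carrying this out gives, for $f$ even, $\phi_0=\tfrac{q-7-2s}{8}$ and $\phi_1=\tfrac{q-3+2s}{8}$, hence $\kappa=\tfrac{1+s}{2}$; and for $f$ odd, $\phi_0=\tfrac{q-7+2s}{8}$ and $\phi_1=\tfrac{q-3-2s}{8}$, hence $\kappa=\tfrac{1-s}{2}$.

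With $\phi_0$, $\phi_1$ and $\kappa$ in hand, the DPDF parameters are $(q,2,\tfrac{q-1}{4},\phi_0,\phi_1)$ and the EPDF parameters are $(q,2,\tfrac{q-1}{4},\tfrac{q-5}{4}-\phi_0,\tfrac{q-1-4\kappa}{4}-\phi_0)$, read straight off Corollary~\ref{corg5}(iii); substituting the above closed forms produces the stated expressions. For properness, I would note that $s\equiv 1\bmod 4$ forces $s\neq -1$, so in case (i) $\kappa=\tfrac{1+s}{2}\neq 0$, and $\kappa=1$ exactly when $s=1$; while in case (ii) $\kappa=\tfrac{1-s}{2}\neq 1$, and $\kappa=0$ exactly when $s=1$. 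Hence for $s\neq 1$ both structures are proper by Corollary~\ref{corg5}(iii); when $s=1$, case (i) falls into Corollary~\ref{corg5}(ii) (an EDF, with common frequency $\tfrac{(e-2)f}{4}=\tfrac{q-1}{8}$) and case (ii) into Corollary~\ref{corg5}(i) (a DDF, with frequency $\tfrac{f-1}{2}=\tfrac{q-5}{8}$), giving the two exceptional families.

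The main obstacle is the bookkeeping of the order-$4$ cyclotomic numbers: getting the groupings, and in particular the sign of the term in $t$, correct, together with the fact that the distribution of values differs between $f$ even and $f$ odd (reflecting $-1\in C_0^4$ versus $-1\in C_2^4$). The key simplification that sidesteps most of this is that only the combination $(0,0)_4+(2,0)_4$ --- which is free of $t$ --- and the identity $\phi_0+\phi_1=f-1$ are actually needed, so the sign ambiguity in $t$ never propagates into the final parameters; the remaining work is routine algebraic simplification and the single case check at $s=1$.
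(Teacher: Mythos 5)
Your proposal is correct and follows essentially the same route as the paper: both compute $\phi_0=(0,0)_4+(2,0)_4$ and $\phi_1=(1,0)_4+(3,0)_4$ from the order-$4$ cyclotomic numbers of Theorem~\ref{KaRaThm}, split by the parity of $f$, and then read the DPDF/EPDF parameters off Theorem~\ref{thm6} (equivalently Corollary~\ref{corg5}) via $\kappa=\phi_1-\phi_0$. Your extra observations --- that the $t$-terms cancel, that $\phi_0+\phi_1=f-1$, and that $s\equiv 1\bmod 4$ rules out $s=-1$ in the properness check --- are accurate refinements of the same argument.
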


\begin{proof}
In the notation of Theorem \ref{KaRaThm},
\begin{itemize} 
\item[(i)] if $f$ is even, $\phi_0 = A+C = \frac{q-2s-7}{8}$ and $\phi_1 = B+D = \frac{q+2s-3}{8}$;  
\item [(ii)] if $f$ is odd, $\phi_0 = 2A = \frac{q+2s-7}{8}$ and $\phi_1 = 2E = \frac{q-2s-3}{8}$.
\end{itemize}
The result follows by combining this with Theorem \ref{thm6} and Corollary \ref{combined_cor}.
\end{proof}

We can repeat this process for $e=6$. Before we look at the cyclotomic numbers for $e=6$, we will make the following general observation about the $f$ odd case. 

\begin{remark}
Let GF$(q)$ be a finite field of prime power order such that $q = 6f+1$ such that $f$ is odd. Then $q \equiv 3 \mod 4$ and by Theorem \ref{thmg5}, this implies that ${C_0^2}^{\prime}$ is always a $(q,3,f,\frac{f-1}{2})$-DDF and a $(q,3,f,f)$-EDF. 
\end{remark}

With the case for $f$ odd ruled out when $e=6$, this just leaves the case for $f$ even.  The EDF part corresponds to the EDF result in Corollary 2.10 of \cite{HuaWu}.

\begin{theorem}\label{Cyce=6}
Let $q$ be an odd prime power of the form $q = 6f+1$ and suppose $f$ is even. Let $s$ be defined as it is in Theorem 7.3. Then ${C_0^2}^{\prime}$ is a proper $(q,3,\frac{q-1}{6},\frac{q-9-4s}{12},\frac{q-5+4s}{12})$-DPDF and a proper $(q,3,\frac{q-1}{6},\frac{q-3+2s}{6},\frac{q+1-2s}{6})$-EPDF except when $s = 1$, in which case ${C_0^2}^{\prime}$ is a proper $(q,3,\frac{q-1}{6},\frac{q-13}{12},\frac{q-1}{12})$-DPDF and a $(q,3,\frac{q-1}{6},\frac{q-1}{6})$-EDF.
\end{theorem}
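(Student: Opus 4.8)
The plan is to reduce everything to the partition machinery of Theorem \ref{thm6} in the case $\epsilon = 2$, i.e.\ to Corollary \ref{corg5}, and to supply the two quantities $\phi_0, \phi_1$ by a direct cyclotomic computation. First I would record that the hypotheses $q = 6f+1$ with $f$ even force $q \equiv 1 \bmod 12$, so $q \equiv 1 \bmod 4$ and, by Proposition \ref{prop1}, the set of squares $C_0^2$ is a $(q, \frac{q-1}{2}, \frac{q-5}{4}, \frac{q-1}{4})$-PDS partitioned by ${C_0^2}^{\prime} = \{C_0^6, C_2^6, C_4^6\}$; the complementary case $f$ odd has already been disposed of by the preceding remark (there $q \equiv 3 \bmod 4$ and Theorem \ref{thmg5} yields a DDF/EDF rather than a proper structure). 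Since $\epsilon = 2$, the hypothesis ``$\phi_i = \phi_j$ for distinct $i, j \in \{1, \dots, \epsilon - 1\}$'' required by Theorem \ref{thm6} is vacuous, so Corollary \ref{corg5} applies as soon as $\phi_0$ and $\phi_1$ are known, with $\lambda = \frac{q-5}{4}$, $\mu = \frac{q-1}{4}$ and hence $\mu - \lambda = 1$.

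Next I would compute $\phi_0$ and $\phi_1$. By Proposition \ref{thmg4CD} with $e = 6$ and $\epsilon = 2$,
\[
\phi_0 = (0,0)_6 + (2,0)_6 + (4,0)_6, \qquad \phi_1 = (1,0)_6 + (3,0)_6 + (5,0)_6.
\]
I would then substitute the order-$6$ cyclotomic numbers for the case $f$ even (Theorem \ref{thm:e=6}), written via the proper representation $q = s^2 + 3t^2$ with $s \equiv 1 \bmod 3$, and simplify. The target is
\[
\phi_0 = \frac{q - 9 - 4s}{12}, \qquad \phi_1 = \frac{q - 5 + 4s}{12},
\]
and I would use the identity $\phi_0 + \phi_1 = f - 1 = \frac{q-7}{6}$ (valid since $\Phi_0 \cup \Phi_1 = C_0^6 \setminus \{1\}$) as a consistency check to halve the algebra.

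Finally I would assemble the conclusion. Writing $\kappa = \phi_1 - \phi_0 = \frac{1 + 2s}{3}$ (an integer because $s \equiv 1 \bmod 3$), note that $\kappa = 0$ is impossible while $\kappa = 1$ holds exactly when $s = 1$, precisely the value $\mu - \lambda$. For $s \neq 1$ I would apply Corollary \ref{corg5}(iii) to get the proper $(q, 3, \frac{q-1}{6}, \phi_0, \phi_1)$-DPDF and the proper $(q, 3, \frac{q-1}{6}, \frac{q-5}{4} - \phi_0, \frac{q-1}{4} - \phi_1)$-EPDF, then substitute the displayed $\phi_0, \phi_1$ to recover the stated parameters; for $s = 1$ I would instead apply Corollary \ref{corg5}(ii), giving the $(q, 3, \frac{q-1}{6}, \frac{q-1}{6})$-EDF together with the accompanying $(q, 3, \frac{q-1}{6}, \frac{f-2}{2}, \frac{f}{2})$-DPDF, still proper since $\frac{f-2}{2} \neq \frac{f}{2}$. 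The main obstacle is the middle step: the order-$6$ cyclotomic numbers are not single-valued in $s$ alone but split into several sub-cases governed by the auxiliary representation $4q = L^2 + 27M^2$ and by the character of $2$, so the real work is verifying that all dependence on $t$ (and on $L, M$) cancels in the two sums $\phi_0$ and $\phi_1$, leaving expressions in $s$ only; the identity $\phi_0 + \phi_1 = f-1$ together with the parity constraints of Corollary \ref{combined_cor} are the tools I would use to control and confirm this cancellation.
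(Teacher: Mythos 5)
Your proposal is correct and follows essentially the same route as the paper: the paper likewise computes $\phi_0=(0,0)_6+(2,0)_6+(4,0)_6=\frac{q-9-4s}{12}$ and $\phi_1=(1,0)_6+(3,0)_6+(5,0)_6=\frac{q-5+4s}{12}$ from Theorem \ref{thm:e=6}, observing that the $t$-dependence cancels identically in all the sub-cases, and then feeds these into Theorem \ref{thm6} (equivalently your Corollary \ref{corg5} with $\epsilon=2$) together with Corollary \ref{combined_cor}. Your extra details --- the check $\phi_0+\phi_1=f-1$, the explicit $\kappa=\frac{1+2s}{3}$ with $\kappa=1$ iff $s=1$, and the case split between parts (ii) and (iii) of Corollary \ref{corg5} --- are exactly what the paper's terse final sentence compresses.
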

\begin{proof}
In the notation of Theorem \ref{thm:e=6}: when $f$ is even, $\phi_0 = A + C + E = \frac{q-9-4s}{12}$ and $\phi_1 = B + D + F = \frac{q-5+4s}{12}$ (since the values of $\phi_0$ and $\phi_1$ are the same in each of the cases stated in Theorem \ref{thm:e=6}).  The result follows by combining this with Theorem \ref{thm6} and Corollary \ref{combined_cor}.
\end{proof}

\begin{example}
The $GF(13)$ construction shown in Example \ref{eg3}, is also an illustration of Theorem \ref{Cyce=6}.  Here, $q=13$, $e=6$ and $f=2$; since $13=1+3 \cdot 2^2$, $s=1$ and so ${C_0^2}^{\prime}$ is a proper $(13,3,2,0,1)$-DPDF and a $(13,6,2,2)$-EDF.
\end{example}

We can also replicate this process for $e=8$. 

\begin{theorem}
Let $q$ be an odd prime power of the form $q = 8f + 1$. Let $x$ and $a$ be defined as they are in Theorem 7.4. Then ${C_0^2}^{\prime}$ is always a proper $(q,4,\frac{q-1}{8},\frac{q-11-2x-4a}{16},\frac{q-7+2x+4a}{16})$-DPDF and a proper $(q,4,\frac{q-1}{8},\frac{3q-9+2x+4a}{16},\frac{3q+3-2x-4a}{16})$-EPDF, except in the following cases:
\begin{itemize}
\item[(i)] When $f$ is even and $x+2a=3$. In this case ${C_0^2}^{\prime}$ is a $(q,4,\frac{q-1}{8},\frac{3q-3}{16})$-EDF. 
\item[(ii)] When $f$ is odd and $x+2a=-1$. In this case ${C_0^2}^{\prime}$ is a $(q,4,f,\frac{q-9}{16})$-DDF.    
\end{itemize}
\end{theorem}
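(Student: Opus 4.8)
The plan is to follow the template established for the $e=4$ and $e=6$ results, reducing everything to the evaluation of $\phi_0$ and $\phi_1$ and then invoking Theorem \ref{thm6} with $\epsilon=2$. First I would record the ambient PDS: since $q=8f+1\equiv 1 \bmod 8$, Proposition \ref{prop1} shows that $C_0^2$ is a $(q,\frac{q-1}{2},\frac{q-5}{4},\frac{q-1}{4})$-PDS, so in the notation of Theorem \ref{thm6} we have $\lambda=\frac{q-5}{4}$ and $\mu=\frac{q-1}{4}$, with $\mu-\lambda=1$. Next, by Proposition \ref{thmg4CD} (with $e=8$, $\epsilon=2$), I would write $\phi_0=(0,0)_8+(2,0)_8+(4,0)_8+(6,0)_8$ and $\phi_1=(1,0)_8+(3,0)_8+(5,0)_8+(7,0)_8$, so that the whole problem comes down to summing the even-indexed and odd-indexed cyclotomic numbers of order $8$.

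The central computation is to evaluate these two sums using the order-$8$ cyclotomic number tables of Theorem \ref{thm:e=8}, which are stated separately for $f$ even and $f$ odd and involve the proper representations $q=x^2+4y^2=a^2+2b^2$ (with $x\equiv a\equiv 1 \bmod 4$). The goal is to verify that, in both parity cases, the sums collapse to $\phi_0=\frac{q-11-2x-4a}{16}$ and $\phi_1=\frac{q-7+2x+4a}{16}$; in particular the $y$- and $b$-dependent entries must cancel, leaving dependence only on $q,x,a$. As a consistency check, $\phi_0+\phi_1=\frac{q-9}{8}=f-1$, which matches the total number of transversals of $\Delta(C_0^8)$. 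I expect this bookkeeping to be the main obstacle: there are eight cyclotomic numbers split across two tables, and confirming the uniform formula (with the correct $x\equiv a\equiv 1\bmod 4$ sign conventions) is where the real work lies, although it is routine.

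With $\phi_0,\phi_1$ in hand, the DPDF parameters $(q,4,\frac{q-1}{8},\phi_0,\phi_1)$ follow immediately from Theorem \ref{thm6}, and the EPDF parameters follow by subtraction: $\lambda-\phi_0=\frac{3q-9+2x+4a}{16}$ and $\mu-\phi_1=\frac{3q+3-2x-4a}{16}$. To identify the exceptions I would compute $\kappa=\phi_1-\phi_0=\frac{1+x+2a}{4}$, so that $\kappa=0$ exactly when $x+2a=-1$ (giving a DDF, parameter $\frac{f-1}{2}=\frac{q-9}{16}$ via Theorem \ref{thm6}(ii)) and $\kappa=\mu-\lambda=1$ exactly when $x+2a=3$ (giving an EDF, parameter $\frac{(e-\epsilon)f}{\epsilon^2}=\frac{3f}{2}=\frac{3q-3}{16}$ via Theorem \ref{thm6}(iii)). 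Finally, to justify that the DDF exception occurs only for $f$ odd and the EDF exception only for $f$ even, and to confirm the properness claims, I would appeal to the parity information of Theorem \ref{combined}(a) together with Corollary \ref{combined_cor}: for $q\equiv 1\bmod 8$ with $f$ even one has $\phi_0$ odd and $\phi_1$ even, so $\kappa$ is odd (forcing $\kappa\neq 0$, hence no DDF), while for $f$ odd both $\phi_0,\phi_1$ are even, so $\kappa$ is even (forcing $\kappa\neq 1$, hence no EDF). This yields the stated parity split of the two exceptional cases and shows that outside them both structures are proper.
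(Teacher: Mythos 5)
Your proposal is correct and follows essentially the same route as the paper: compute $\phi_0$ and $\phi_1$ as the sums of the even- and odd-indexed order-$8$ cyclotomic numbers from Theorem 7.4 (verifying that the $y$- and $b$-terms cancel in all four cases), feed the resulting values into Theorem 5.6 with $\epsilon=2$, and use the parity facts of Theorem 3.15/Corollary 3.16 to show $\kappa$ is odd for $f$ even and even for $f$ odd, which pins each exceptional case to the correct parity of $f$. Your write-up is somewhat more explicit about the exception analysis than the paper's one-line citation, but the underlying argument is identical.
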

\begin{proof}
Suppose $q$ is represented by $q=x^2+4y^2=a^2+2b^2$ as in Theorem \ref{thm:e=8}. Then 
\begin{itemize}
\item[(i)] When $f$ is even, $\phi_0 = A + C + E + G = \frac{q-11-2x-4a}{16}$ and $\phi_1 = B + D + F +H= \frac{q-7+2x+4a}{16}$.
\item[(ii)] When $f$ is odd, $\phi_0 = 2A + 2N = \frac{q-11-2x-4a}{16}$ and $\phi_1 = 2I + 2J = \frac{q-7+2x+4a}{16}$.
\end{itemize}
(Note that the same values of $\phi_0$ and $\phi_1$ hold irrespective of whether 2 is a quartic residue). The result follows by combining this with Theorem \ref{thm6} and  Corollary \ref{combined_cor}.
\end{proof}

\begin{example}
Let GF$(q)$ = GF$(41)$, such that $e = 8$ and $f = 5$. Here $q = 5^2 + 4(2^2) = (-3)^2 + 2(4^2)$, and as $5,-3 \equiv 1 \mod 4$, we can see that these are both proper representations of $q$. Notice that $x + 2a = 5 - 6 = -1$. Since $f$ is odd and $x + 2a = -1$,  ${C_0^2}^{\prime}$ forms a $(41,4,5,2)$-DDF and a proper $(41,4,5,7,8)$-EPDF.
\end{example}

\section{Further Work}
It would be of interest to find new constructions for DPDFs and EPDFs.  For the cyclotomic approach, one natural direction would be to further investigate theoretical constructions where the sets in the family are formed from unions of $C_i^e$.  An initial investigation was begun in this paper via Theorem \ref{WDPDF}, and there are various examples of such DDF/EDF constructions in the literature which are not covered by this theorem.

We may also ask about non-cyclotomic construction methods.   Packings of PDSs offer a method of obtaining DPDFs, and when they have the additional property that the complement of the union is a subgroup then we are guaranteed an EPDF.   For example, in the recent paper \cite{JedLi}, the authors define a $(c,t)$ LP-packing $\{P_1, \ldots, P_t\}$ in an abelian group $G$ of order $t^2 c^2$ relative to a subgroup $U$ of order $tc$, and prove an existence result.  Here the $P_i$'s are disjoint, each of order $c(tc-1)$, and the complement of their union is a subgroup $U$ of $G$.  Any such $(c,t)$ LP-packing will form a DPDF which is also an EPDF.  It would be of interest to explore such approaches further.

There are various ways in which the definitions of DPDF and EPDF could naturally be generalized.  The condition on disjointness of the sets in the DPDF could be relaxed, and the set sizes could be allowed to vary.  We could also generalize our DPDF/EPDF definitions in such a way that the two frequencies correspond to elements inside/outside an arbitrary subset of $G$, rather than the union of the sets of the family. 

\begin{definition}\label{T_PDF}
Let $T$ be a subset of $G^*$. 
\begin{itemize}
\item[(i)] A collection of $m$ disjoint subsets $S = \{ D_1, \ldots, D_m \}$ in $G^*$, where each $D_i$ has cardinality $k_i$, forms an $(n,m, k_1, \ldots, k_m; \lambda, \mu)$ Disjoint Partial Difference Family of $G$ (relative to $T$) if the following multiset equation holds: 
$$ \cup_i \Delta(D_i)= \lambda(T) + \mu (G^* \setminus T) $$
\item[(ii)] A collection of disjoint $m$ subsets $S = \{ D_1, \ldots, D_m \}$ in $G^*$, where each $D_i$ has cardinality $k_i$, forms an $(n,m, k_1, \ldots, k_m; \lambda, \mu)$ External Partial Difference Family (relative to $T$) if the following multiset equation holds: 
$$ \cup_{i \neq j} \Delta(D_i, D_j)= \lambda(T) + \mu (G^* \setminus T) $$
\end{itemize}
\end{definition}
Observe that Definitions \ref{PDF} and \ref{EPDF} correspond to taking $T=\cup_{i=1}^m D_i$ in Definition \ref{T_PDF}.  This is also closer to the definition given in \cite{DavHucMul}.  Construction methods for these would be of interest.

\section{Appendix: cyclotomic numbers}
In this appendix, we briefly present the necessary literature on evaluating cyclotomic numbers for $e=3,4,6$ and $8$.  Our references are \cite{Dic}, \cite{KatRaj}, \cite{Leh},\cite{Sto} and the Appendix in \cite{MeiWin}.

\begin{theorem}\label{thm:e=3}
Let GF$(q)$ be a finite field of order $q=p^m \equiv 1 \mod 3$.  Let 
$$4q = c^2 + 27d^2, \qquad c \equiv 1 \mod 3$$ 
where \begin{itemize}
    \item if $p \equiv 2 \mod 3$ then $m$ is even and $d=0$;
    \item if $p \equiv 1 \mod 3$ then this is the unique proper representation of $4q$, with $c \equiv 1 \mod 3$. 
\end{itemize}
We have the following cyclotomic relations; 
\begin{center} 
$A = (0,0) = \frac{1}{9}(q-8+c)$, $B = (0,1) = (1,0) = (2,2) = \frac{1}{18}(2q-4-c-9d)$, $C = (0,2) = (1,1) = (2,0) = \frac{1}{18}(2q-4-c+9d)$, \\
$D = (1,2) = (2,1) = \frac{1}{9}(q+1+c)$.
\end{center}
\end{theorem}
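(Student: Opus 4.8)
The plan is to prove the four displayed evaluations in three stages: first reduce the nine cyclotomic numbers $(i,j)_3$ to the four quantities $A,B,C,D$ using symmetry; then extract two linear relations among them by elementary counting; and finally supply the two remaining \emph{arithmetic} relations, which encode the quadratic partition $4q = c^2 + 27d^2$, via Jacobi sums. Throughout I use that $q \equiv 1 \bmod 3$ guarantees a multiplicative character $\chi$ of order $3$ on \gf{q}$^\ast$, and that for $e=3$ one always has $-1 \in C_0^3$: when $q$ is odd, $q = 3f+1$ forces $f$ even, and when $q$ is even, $-1 = 1 \in C_0^3$.

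First I would invoke the two standard symmetry relations, $(i,j)_3 = (3-i,\,j-i)_3$ (obtained by dividing $z_i + 1 = z_j$ through by $z_i$) and $(i,j)_3 = (j,i)_3$ (obtained by negating, using $-1 \in C_0^3$). A short check shows these identify exactly the four orbits $\{(0,0)\} = A$, $\{(0,1),(1,0),(2,2)\} = B$, $\{(0,2),(2,0),(1,1)\} = C$ and $\{(1,2),(2,1)\} = D$, matching the statement. Next I would apply the row-sum identity, which reads $\sum_{j}(i,j)_3 = f$ for $i \neq 0$ and $\sum_{j}(0,j)_3 = f-1$; this holds because for fixed $x \in C_i^3$ the element $x+1$ lies in some class unless $x = -1$, and $-1 \in C_0^3$. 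This yields $A + B + C = f-1$ and $B + C + D = f$, whence $D = A+1$. Since these already give $B + C = (f-1) - A$, the whole system is pinned down once I also know the value of $A$ and of the difference $C - B$.

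The arithmetic core, and the main obstacle, is to produce $9A = q - 8 + c$ and $C - B = d$. For this I would introduce the Jacobi sum $J = J(\chi,\chi) = \sum_{v \neq 0,1} \chi(v)\chi(1-v) \in \mathbb{Z}[\omega]$, $\omega = e^{2\pi i/3}$, and use the finite Fourier inversion expressing $9(i,j)_3$ as a sum of the Jacobi sums $J(\chi^a,\chi^b)$ over $0 \le a,b \le 2$, in which the degenerate terms ($a=0$, $b=0$, or $a+b \equiv 0$) contribute known integers while the remaining terms are Galois conjugates of $J$. Taking the appropriate combinations isolates the real part of $J$ (yielding $c$ through $2\,\mathrm{Re}\,J$) and its imaginary part (yielding $d$), once one knows $|J|^2 = J\bar{J} = q$; writing $2J = c + 3d\sqrt{-3}$, this norm relation is precisely $4q = c^2 + 27d^2$. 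The hard part will be fixing the normalisation $c \equiv 1 \bmod 3$ together with the sign of $d$, and, for prime powers $q = p^m$ rather than primes, passing from a base field via the Davenport--Hasse relation. The dichotomy in the hypotheses falls out here: when $p \equiv 2 \bmod 3$ the character $\chi$ does not descend to \gf{p}, the congruence $q \equiv 1 \bmod 3$ forces $m$ even, and the Davenport--Hasse lift makes $J$ real, so $d = 0$; this is exactly the uniform-cyclotomy regime, so as an independent check the resulting values must agree with Theorem~\ref{MillsThm} (via Lemma~\ref{Equiv_Baumert}, since $-1 \equiv p \bmod 3$), which collapses the Jacobi-sum computation entirely. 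When $p \equiv 1 \bmod 3$, $J$ instead realises the unique proper representation of $4q$.

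Finally I would assemble the pieces by solving the linear system $A = (q-8+c)/9$, $B + C = (2q-4-c)/9$, $C - B = d$, $D = A+1$, which recovers the stated formulas $B = (2q-4-c-9d)/18$, $C = (2q-4-c+9d)/18$ and $D = (q+1+c)/9$. I expect the symmetry and counting steps to be entirely routine, so the whole weight of the argument rests on the Jacobi-sum evaluation and its correct normalisation in the two cases.
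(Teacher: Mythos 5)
The paper does not prove this statement: Theorem~\ref{thm:e=3} sits in the appendix of quoted cyclotomic facts and is taken directly from the literature (Dickson, Lehmer, Storer, and the appendix of Meidl--Winterhof), so there is no in-paper argument to compare yours against. Judged on its own, your outline is the classical proof that those sources use, and the parts you carry out are correct: the two symmetries $(i,j)_3=(3-i,j-i)_3$ and $(i,j)_3=(j,i)_3$ (the latter valid because $-1\in C_0^3$ for $e=3$, since $q$ odd forces $f$ even) do collapse the nine numbers to exactly the four orbits $A,B,C,D$ in the statement; the row sums give $A+B+C=f-1$, $B+C+D=f$, hence $D=A+1$, which matches $\frac{1}{9}(q+1+c)=\frac{1}{9}(q-8+c)+1$; and the stated formulas for $B$ and $C$ then follow mechanically from $B+C=\frac{2q-4-c}{9}$ and $C-B=d$. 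Your consistency check against Theorem~\ref{MillsThm} also works out (one needs $c=-2s$ with $s\equiv 1 \bmod 3$ to get $(0,0)_3=\eta^2-1$).

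The caveat is that the entire arithmetic content --- $9A=q-8+c$ and $C-B=d$ --- is only a plan, not a proof. To close it you must actually perform the finite Fourier inversion expressing $9(i,j)_3$ in terms of the Jacobi sums $J(\chi^a,\chi^b)$, establish $J(\chi,\chi)\overline{J(\chi,\chi)}=q$ and the congruence $J(\chi,\chi)\equiv -1 \bmod 3$ in $\mathbb{Z}[\omega]$ (this is what forces $c\equiv 1\bmod 3$ in $2J=c+3d\sqrt{-3}$ and makes the representation of $4q$ proper when $p\equiv 1\bmod 3$), explain that the residual sign ambiguity in $d$ is absorbed by the choice of primitive element (which is why the theorem, unlike Theorem~\ref{thm:e=6}, does not need to fix it), and run the Davenport--Hasse argument for non-prime $q$, including the $p\equiv 2\bmod 3$ case where $\chi$ only lives on $GF(p^2)$ and the Gauss sum is pure, giving $d=0$. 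None of these steps is in doubt --- they are all standard --- but as written the proposal defers exactly the part of the theorem that is not linear algebra.
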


\begin{theorem}\label{KaRaThm}
Let $p$ be an odd prime, let $q=p^m \equiv 1 \mod 4$, and write $q=4f+1$.  Let $v$ be a generator of $GF(q)^*$. 
\begin{itemize}
    \item If $p \equiv 3 \mod 4$, let $s=(-p)^{m/2}$ and $t=0$.
    \item If $p \equiv 1 \mod 4$, define $s$ uniquely by $q=s^2+t^2$, $p \nmid s$, $s \equiv 1 \mod 4$, then $t$ uniquely by $v^{(q-1)/4} \equiv s/t \mod p$.
\end{itemize}
Then the cyclotomic numbers $(i,0)$ of order $4$ for $GF(q)$, corresponding to $v$, are determined unambiguously by the following formulae:
\begin{itemize}
    \item For $f$ even: $A=(0,0)=\frac{1}{16}(q-11-6s)$; $B=(1,0)=(0,1)=(3,3)=\frac{1}{16}(q-3+2s+4t)$; $C=(2,0)=(0,2)=(2,2)=\frac{1}{16}(q-3+2s)$; $D=(3,0)=(0,3)=(1,1)=\frac{1}{16}(q-3+2s-4t)$; 
    $E=(1,2)=(1,3)=(2,1)=(2,3)=(3,1)=(3,2)=\frac{1}{16}(q+1-2s)$.
    \item For $f$ odd: $A=(0,0)=(2,0)=\frac{1}{16}(q-7+2s)$; $E=(1,0)=(3,0)=\frac{1}{16}(q-3-2s)$.
\end{itemize}
\end{theorem}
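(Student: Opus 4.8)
The plan is to evaluate the order-$4$ cyclotomic numbers $(i,0)_4$ by the classical Gauss--Jacobi sum method. Fix a quartic multiplicative character $\chi$ of $\mathrm{GF}(q)^*$ normalised so that $\chi(v)=\sqrt{-1}$, and for $0\le a,b\le 3$ set $J(a,b)=\sum_{z}\chi^a(z)\chi^b(z+1)$. First I would use the character orthogonality relation
$$\mathbf{1}_{C_i^4}(z)=\frac{1}{4}\sum_{a=0}^{3}\chi^a(z)\,\overline{\chi^a(v^i)}\qquad (z\neq 0)$$
to rewrite the defining count
$$(i,0)_4=\sum_{z}\mathbf{1}_{C_i^4}(z)\,\mathbf{1}_{C_0^4}(z+1)$$
as a constant term plus a fixed linear combination (with coefficients that are fourth roots of unity) of the sums $J(a,b)$. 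This reduces the whole problem to evaluating the handful of Jacobi sums of order $4$.

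The decisive ingredient is the evaluation of the quartic Jacobi sum $J(1,1)$. Writing $J(1,1)=g(\chi)^2/g(\chi^2)$ in terms of Gauss sums and using $|g(\chi)|^2=q$ together with the classical value of the quadratic Gauss sum $g(\chi^2)$, one shows that $J(1,1)$ lies in $\mathbb{Z}[\sqrt{-1}]$ and has norm $q$; writing $J(1,1)=s+t\sqrt{-1}$ then produces exactly the representation $q=s^2+t^2$ of the statement, with $s$ odd and $p\nmid s$. The sums $J(a,b)$ for the remaining pairs $(a,b)$ are then expressed in terms of $J(1,1)$, $q$ and $\chi(-1)$ by standard Jacobi-sum identities.

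Next I would pin down the signs. The Gaussian-integer factor generating the prime above $p$ is determined only up to the units $\{\pm 1,\pm\sqrt{-1}\}$; the normalisations $s\equiv 1\bmod 4$ and $v^{(q-1)/4}\equiv s/t \bmod p$ are precisely what is needed to select the correct associate and hence to fix $s$ and $t$ unambiguously. I would then split on the parity of $f$: since $\chi(-1)=(-1)^f$, we have $-1\in C_0^4$ exactly when $f$ is even (equivalently $q\equiv 1\bmod 8$, in agreement with Lemma \ref{lem4CD}), which forces the symmetry $(i,j)_4=(j,i)_4$ and collapses the sixteen cyclotomic numbers to the five values $A,\dots,E$; when $f$ is odd one has instead $-1\in C_2^4$ and the numbers collapse to the two values $A,E$. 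In either case the standard linear relations among the cyclotomic numbers (in particular the row sums $\sum_j (i,j)_4$, each equal to $f$ up to a correction term) combined with the evaluated $J(a,b)$ yield a linear system whose unique solution is the stated list of formulae.

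It remains to treat the case $p\equiv 3\bmod 4$ (with $m$ even): here $-1$ is a power of $p$ modulo $4$, so by Theorem \ref{MillsThm} the cyclotomy is uniform and the Gauss sum $g(\chi)$ is \emph{pure} by Stickelberger's/Hasse--Davenport theory. Purity forces $t=0$ and $s=(-p)^{m/2}$, which recovers the special values in the statement (consistent with the $e=4$ instance of Theorem \ref{ParamThm2}). The hard part will be the evaluation and sign-normalisation of $J(1,1)$: verifying that the two congruence conditions genuinely remove the unit ambiguity in the Gaussian factorisation of $q$ is the delicate step, whereas the passage from the Jacobi-sum values to the explicit cyclotomic numbers is routine linear bookkeeping.
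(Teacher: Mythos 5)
The paper does not prove this statement: Theorem \ref{KaRaThm} sits in the appendix as a verbatim import from the literature (the references given are Dickson, Katre--Rajwade, Lehmer, Storer and the appendix of Meidl--Winterhof), so there is no in-paper argument to compare against. Your sketch reconstructs the standard derivation that underlies those sources, and as an outline it is sound: the orthogonality expansion of $(i,0)_4$ into Jacobi sums, the evaluation $J(\chi,\chi)=s+t\sqrt{-1}$ with $s^2+t^2=q$, the parity dichotomy via $\chi(-1)=(-1)^f$ (so $-1\in C_0^4$ iff $f$ is even, matching Lemma \ref{lem4CD}), and the pure-Gauss-sum degeneration to $t=0$, $s=(-p)^{m/2}$ when $p\equiv 3\bmod 4$ are all correct and are exactly the ingredients used in the cited works. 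You also correctly locate the genuine difficulty: the unit ambiguity in the Gaussian factorisation of $q$, whose resolution by the pair of normalisations $s\equiv 1\bmod 4$ and $v^{(q-1)/4}\equiv s/t\bmod p$ is precisely the contribution of Katre and Rajwade (it is the title of their paper). What your proposal does not do is execute either of the two nontrivial computations: the verification that the congruence $v^{(q-1)/4}\equiv s/t\bmod p$ really selects the associate of the Gaussian prime compatible with the chosen character $\chi$ (this requires relating the character to a prime ideal above $p$ via a Kummer/Stickelberger-type congruence, and is several pages of work in the original), and the final linear solve that produces the explicit constants $-11-6s$, $-3+2s\pm 4t$, etc. As a blind proposal the architecture is right and matches the canonical route; as a proof it is a scaffold with the decisive steps deferred.
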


\begin{theorem}\label{thm:e=6}
Let $q=6f+1$, ie $e=6$.  When $e=6$, the $36$ cyclotomic numbers are solely functions of the unique representation 
$$ q=p^m=s^2+3t^2; \qquad s \equiv 1 \mod 3 $$
determined by
\begin{itemize}
    \item if $p \equiv 5 \mod 6$ then $m$ is even and $q=(\pm p^{m/2})^2+ 3 (0)^2$;
    \item if $p \equiv 1 \mod 6$ then $q=s^2+3t^2$ is the unique proper representation of $q$, with $s \equiv 1 \mod 3$; the sign of $t$ is ambiguously determined.
\end{itemize}
Then, when $f$ is even, the cyclotomic numbers $(i,0)$ of order 6 for $\rm{GF}(q)$ may be determined by the following formulae: 
\begin{itemize}
\item when $2 \in C_0$ or $2 \in C_3$: 
$A = (0,0) = \frac{1}{36}(q-17-20s)$;
$B = (1,0) = \frac{1}{36}(q-5+4s+18t)$;
$C = (2,0) = \frac{1}{36}(q-5+4s+6t)$;
$D = (3,0) = \frac{1}{36}(q-5+4s)$;
$E = (4,0) = \frac{1}{36}(q-5+4s-6t)$;
$F = (5,0) = \frac{1}{36}(q-5+4s-18t)$.
\item when $2 \in C_1$ or $2 \in C_4$:
$A = (0,0) = \frac{1}{36}(q-17-8s+6t)$;
$B = (1,0) = \frac{1}{36}(q-5+4s+12t)$;
$C = (2,0) = \frac{1}{36}(q-5+4s-6t)$;
$D = (3,0) = \frac{1}{36}(q-5+4s-6t)$
$E = (4,0) = \frac{1}{36}(q-5-8s)$.
$F = (5,0) = \frac{1}{36}(q-5+4s-6t)$
\item when $2 \in C_2$ or $2 \in C_5$:
$A = (0,0) = \frac{1}{36}(q-17-8s-6t)$;
$B = (1,0) = \frac{1}{36}(q-5+4s+6t)$;
$C = (2,0) = \frac{1}{36}(q-5-8s)$;
$D = (3,0) = \frac{1}{36}(q-5+4s+6t)$
$E = (4,0) = \frac{1}{36}(q-5+4s+6t)$
$F = (5,0) = \frac{1}{36}(q-5+4s-12t)$.
\end{itemize}
\end{theorem}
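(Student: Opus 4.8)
The plan is to evaluate the order-$6$ cyclotomic numbers by the standard harmonic-analytic route, reducing everything to Gauss and Jacobi sums. Fix a multiplicative character $\chi$ of $GF(q)^*$ of order $6$ with $\chi(\alpha)=\zeta$ a primitive sixth root of unity, so that for $x\neq 0$ the indicator of $C_i^6$ is $\mathbf{1}_{C_i}(x)=\frac{1}{6}\sum_{a=0}^{5}\zeta^{-ai}\chi^{a}(x)$. Writing $(i,0)_6=\sum_{x\neq 0,-1}\mathbf{1}_{C_i}(x)\,\mathbf{1}_{C_0}(x+1)$ and expanding gives
\begin{equation*}
(i,0)_6=\frac{1}{36}\sum_{a,b=0}^{5}\zeta^{-ai}\sum_{x}\chi^{a}(x)\chi^{b}(x+1),
\end{equation*}
and each inner sum is, up to an explicit boundary correction, a Jacobi sum $J(\chi^{a},\chi^{b})=g(\chi^{a})g(\chi^{b})/g(\chi^{a+b})$. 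So the whole computation reduces to evaluating Gauss sums of orders dividing $6$. I would use the hypothesis that $f$ is even (equivalently $q\equiv 1\bmod 12$) precisely to fix $\chi(-1)=(-1)^{f}=1$, which forces the reflection symmetries $(i,j)_6=(-i,j-i)_6$ and $(i,j)_6=(j,i)_6$ and thereby collapses the $36$ values to the six unknowns $A,\dots,F$.

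Next I would reduce the sextic Gauss sums to cubic and quadratic data. Since $6=2\cdot 3$, write $\chi=\rho\theta$ with $\rho$ quadratic and $\theta$ cubic; then $g(\chi^{3})=g(\rho)=\sqrt{q}$ (with the sign prescribed by Gauss's theorem, real since $q\equiv 1\bmod 4$), while $g(\chi^{2})=g(\theta^{2})$ is cubic. The key tool is the Hasse--Davenport product relation for the quadratic character, $g(\chi)g(\chi\rho)=\chi^{-2}(2)\,g(\chi^{2})g(\rho)$; since $\chi\rho=\theta$ and $\chi^{2}=\theta^{2}$ this yields $g(\chi)=\chi^{-2}(2)\,g(\theta^{2})g(\rho)/g(\theta)$, expressing the sextic Gauss sum through the cubic and quadratic ones times a power of $\chi(2)$. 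This is exactly where the three cases of the theorem arise: $\chi^{-2}(2)=\zeta^{-2\,\mathrm{ind}(2)}$ depends only on $\mathrm{ind}(2)\bmod 3$ and takes the three cube-root-of-unity values according as $2\in C_0\cup C_3$, $2\in C_1\cup C_4$, or $2\in C_2\cup C_5$, matching the trichotomy in the statement.

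I would then evaluate the cubic Jacobi sum $J(\theta,\theta)$, which lies in $\mathbb{Z}[\omega]$ ($\omega=\zeta_3$) and has norm $q$; since $\mathbb{Q}(\sqrt{-3})$ has class number one, choosing the primary associate produces the representation $q=s^{2}+3t^{2}$ with $s\equiv 1\bmod 3$, the sign of $t$ remaining ambiguous because it tracks the choice of $\zeta$. Substituting the evaluated Gauss and Jacobi sums back into the Fourier expansion, collecting real combinations and using $q\equiv 1\bmod 12$ to suppress the imaginary contributions, yields linear forms in $q$, $s$ and $t$; the cube-root-of-unity factor $\chi^{-2}(2)$ rotates the $s,t$-dependence, which is what redistributes the $t$-coefficients among $B,\dots,F$ differently in the three cases and produces, for instance, $A=\frac{1}{36}(q-17-20s)$ in the case $2\in C_0\cup C_3$.

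The main obstacle will be the phase bookkeeping: tracking the powers of $\chi(2)$ from the Hasse--Davenport relation, fixing the sign normalizations of the quadratic and cubic Gauss sums, and confirming that the three cases are mutually consistent and exhaustive. To pin down any residual ambiguity I would impose the classical linear constraints on the cyclotomic numbers---the row-sum identities $\sum_{j}(i,j)_6=f-[\,-1\in C_i\,]$ together with the reflection relations above---which over-determine $A,\dots,F$ and so both fix the constants and provide an independent internal check on the extracted formulae. Finally I would validate the case-selection rule for the class of $2$ against a tabulated small instance such as $q=13$ (where $13=1^2+3\cdot 2^2$, so $s=1$), confirming the parametrization before trusting the general expressions.
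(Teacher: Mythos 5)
There is no proof of this statement in the paper to compare against: Theorem \ref{thm:e=6} sits in the appendix that explicitly collects ``the necessary literature'' on cyclotomic numbers, and the formulae are quoted from \cite{Dic}, \cite{Leh}, \cite{Sto} and the appendix of \cite{MeiWin} rather than derived. Your sketch is a faithful outline of the classical character-sum derivation that underlies those references, and you have correctly located every structural feature of the statement: the Fourier expansion of $(i,0)_6$ over sextic characters reducing the problem to Jacobi sums; the hypothesis that $f$ is even entering only through $\chi(-1)=(-1)^f=1$, which supplies the symmetries collapsing the $36$ numbers to six; the Hasse--Davenport product formula injecting the factor $\chi^{-2}(2)$, whose value depends on ${\rm ind}(2) \bmod 3$ and therefore produces exactly the three cases $2\in C_0\cup C_3$, $C_1\cup C_4$, $C_2\cup C_5$; and the cubic Jacobi sum supplying the representation $q=s^2+3t^2$ with $s\equiv 1 \bmod 3$ and the sign ambiguity in $t$. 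The caveat is that for a theorem whose entire content is a table of eighteen explicit coefficients, the ``phase bookkeeping'' you defer is the proof: nothing in the proposal as written pins down, say, the $-20s$ in $A$ or the $\pm 18t$, $\pm 12t$, $\pm 6t$ pattern. Your proposed cross-checks are sound but insufficient to recover them --- the row-sum identity $\sum_i (i,0)_6 = f-1$ (which the stated table does satisfy in all three cases, since the $s$- and $t$-coefficients cancel) gives only one linear relation among six unknowns, and a single instance such as $q=13$ (where $s=1$, $t=\pm 2$, $2\in C_1$, and the table correctly returns $(1,0)=1$ with all other $(i,0)=0$) validates but does not determine the general formulae. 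So the route is right and would succeed, but completing it requires actually evaluating the sextic Gauss sums and extracting real parts, as is done in the cited sources.
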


\begin{theorem}\label{thm:e=8}
Let $q=8f+1$.  When $e=8$, the cyclotomic numbers are uniquely determined by $x,y,a$ and $b$, defined as follows.
\begin{itemize}
    \item $q=x^2+4y^2$, $x \equiv 1 \mod 4$ is the unique proper representation of $q=p^m$ if $p \equiv 1 \mod 4$; otherwise $x=\pm p^{m/2}$ and $y=0$.
    \item $q=a^2+2b^2$, $a \equiv 1 \mod 4$, is the unique proper representation of $q=p^m$ if $p \equiv 1$ or $3 \mod 8$; otherwise $a= \pm p^{m/2}$ and $b=0$. 
\end{itemize}
The signs of $y$ and $b$ are ambiguously determined.\\
The cyclotomic numbers $(i,0)$ of order 8 for $\rm{GF}(q)$ are determined by the following formulae: 
\begin{itemize}
\item When 2 is a quartic residue and $f$ is even: 
$A = (0,0) = \frac{1}{64}(q-23-18x-24a)$;
$B = (1,0) = \frac{1}{64}(q-7+2x+4a+16y+16b)$;
$C = (2,0) = \frac{1}{64}(q-7+6x+16y)$;
$D = (3,0) = \frac{1}{64}(q-7+2x+4a-16y+16b)$;
$E = (4,0) = \frac{1}{64}(q-7-2x+8a)$;
$F = (5,0) = \frac{1}{64}(q-7+2x+4a+16y-16b)$;
$G = (6,0) = \frac{1}{64}(q-7+6x-16y)$;
$H = (7,0) = \frac{1}{64}(q-7+2x+4a-16y-16b)$.
\item When 2 is a quartic residue and $f$ is odd: 
$A = (0,0) = (4,0) = \frac{1}{64}(q-15-2x)$;
$I = (1,0) = (5,0) = \frac{1}{64}(q-7+2x+4a)$;
$N = (2,0) = (6,0) = \frac{1}{64}(q-7-2x-8a)$;
$J = (3,0) = (5,0) = \frac{1}{64}(q-7+2x+4a)$.
\item When 2 is not a quartic residue and $f$ is even: 
$A = (0,0) = \frac{1}{64}(q-23+6x)$;
$B = (1,0) = \frac{1}{64}(q-7+2x+4a)$;
$C = (2,0) = \frac{1}{64}(q-7-2x-8a-16y)$;
$D = (3,0) = \frac{1}{64}(q-7+2x+4a)$;
$E = (4,0) = \frac{1}{64}(q-7-10x)$;
$F = (5,0) = \frac{1}{64}(q-7+2x+4a)$;
$G = (6,0) = \frac{1}{64}(q-7-2x-8a+16y)$;
$H = (7,0) = \frac{1}{64}(q-7+2x+4a)$
\item When 2 is not a quartic residue and $f$ is odd:
$A = (0,0) = (4,0) = \frac{1}{64}(q-15-10x-8a)$;
$I = (1,0) = (5,0) = \frac{1}{64}(q-7+2x+4a+16y)$;
$N = (2,0) = (6,0) = \frac{1}{64}(q-7+6x)$;
$J = (3,0) = (7,0) = \frac{1}{64}(q-7+2x+4a-16y)$.
\end{itemize}
\end{theorem}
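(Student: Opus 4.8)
The goal is to produce the explicit closed forms for the order-$8$ cyclotomic numbers $(i,0)_8$, so the plan is to pass from the combinatorial count to Gauss and Jacobi sums and then evaluate those sums arithmetically. First I would fix a multiplicative character $\chi$ of $\mathrm{GF}(q)^*$ of order $8$ with $\chi(\alpha)=\zeta_8$, where $\zeta_8$ is a primitive eighth root of unity, so that for $z\neq 0$ the indicator of $z\in C_i^8$ equals $\frac{1}{8}\sum_{a=0}^{7}\zeta_8^{-ai}\chi^a(z)$. Substituting this into the definition of $(i,0)_8$ as $\#\{z\in C_i^8 : z+1\in C_0^8\}$ and applying orthogonality, the count collapses to $64\,(i,0)_8=\sum_{a,b}\zeta_8^{-ai}\,\chi^a(-1)\,J(\chi^a,\chi^b)$, where $J(\chi^a,\chi^b)=\sum_{u}\chi^a(u)\chi^b(1-u)$ is the Jacobi sum and the degenerate terms (those with $\chi^a$, $\chi^b$, or $\chi^a\chi^b$ trivial) are tracked separately. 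Since $\chi(-1)=(-1)^f$, the parity of $f$ already enters here through the factor $\chi^a(-1)=(-1)^{af}$, which explains the split of the statement into $f$ even and $f$ odd.

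Next I would evaluate the finitely many Jacobi sums of order dividing $8$. The quadratic and quartic sums are classical and are determined, up to normalization, by the representation $q=x^2+4y^2$ obtained from factoring the relevant sum in $\mathbb{Z}[i]$; the genuinely octic sums have absolute value $\sqrt{q}$ in $\mathbb{Z}[\zeta_8]$ and are controlled by the second representation $q=a^2+2b^2$ coming from the arithmetic of $\mathbb{Q}(\zeta_8)$. Substituting these evaluations back into the displayed character sum, grouping the terms according to the power $\zeta_8^{-ai}$ and simplifying, yields a closed form for each $(i,0)_8$. The remaining dichotomy in the statement, whether or not $2$ is a quartic residue, is forced at this stage, because the value and sign pattern of the octic Jacobi sums depend on the splitting behaviour of the prime $2$, equivalently on whether $2\in C_0^4$.

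The main obstacle, as is standard for order $8$, is not the character-sum bookkeeping but the sign and conjugate disambiguation of the Jacobi sums: the representations $q=x^2+4y^2=a^2+2b^2$ are determined only up to sign and complex conjugation, so to obtain unambiguous formulae one must impose the normalizations $x\equiv a\equiv 1\bmod 4$ and tie the residual sign of $y$ (and of $b$) to the chosen primitive element $\alpha$ via congruences for the Jacobi sums modulo a prime above $1-\zeta_8$. This delicate step is exactly what was carried out classically, so rather than reproduce the full sign analysis I would establish the Gauss/Jacobi-sum reduction above as the conceptual core and cite \cite{Dic}, \cite{Leh}, \cite{Sto} and the tabulation in \cite{MeiWin} for the resulting values.
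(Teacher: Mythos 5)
Your proposal is a correct outline of the classical derivation, but it is worth noting that the paper itself offers no proof of this statement at all: Theorem \ref{thm:e=8} sits in an appendix explicitly described as a summary of ``the necessary literature on evaluating cyclotomic numbers,'' and the formulae are simply quoted from \cite{Dic}, \cite{Leh}, \cite{Sto} and the appendix of \cite{MeiWin}. So the comparison is between a citation and a sketch of the argument behind that citation. Your reduction is the standard and correct one: the character-sum identity $64\,(i,0)_8=\sum_{a,b}\zeta_8^{-ai}\chi^a(-1)J(\chi^a,\chi^b)$ (with degenerate terms handled separately) is right, the factor $\chi^a(-1)=(-1)^{af}$ does account for the $f$-parity dichotomy, and the quartic character of $2$ does govern the octic Jacobi sums and hence the remaining case split. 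You have also correctly identified that the genuinely hard content is not this bookkeeping but the normalization $x\equiv a\equiv 1\bmod 4$ and the residual sign ambiguity of $y$ and $b$, which is precisely why the paper's statement says those signs are ``ambiguously determined.'' Since you defer that step to the same classical sources the authors cite, your proposal and the paper ultimately rest on identical foundations; what your version buys is an explanation of \emph{why} the theorem has the shape it does (the two quadratic forms, the two dichotomies), which the paper's bare citation does not provide. To call it a complete proof you would still need to actually evaluate the order-$8$ Jacobi sums and carry out the sign analysis, but as a blind reconstruction of the provenance of a quoted classical result, there is no gap relative to what the paper itself does.
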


\section{Appendix: Section 5 DPDF/EPDF parameters}
The following table contains all Section 5 parameters for $q \leq 121$ and $e \in \{2,3,4,6,8\}$.

\begin{center}
{\scriptsize
\begin{longtable}[c]{|c|c|c|c|c|c|c|c|}
    \hline
    $q$ & $\epsilon$ & $\rho$ & PDS Parameters & $e$ & $f$ & DPDF Parameters & EPDF Parameters  \\
    \hline 
    9 & 2 & 4 & (9,4,1,2)-PDS & 4 & 2 & (9,2,2,1,0)-DPDF & (9,2,2,0,2)-EPDF \\
    \hline
    13 & 2 & 6 & (13,6,2,3)-PDS & 4 & 3 & (13,2,3,0,2)-DPDF & (13,2,3,2,1)-EPDF \\
    \hline
    13 & 2 & 6 & (13,6,2,3)-PDS & 6 & 2 & (13,2,3,0,1)-DPDF & (13,2,3,2)-EDF \\
    \hline
    17 & 2 & 8 & (17,8,3,4)-PDS & 4 & 4 & (17,2,4,1,2)-DPDF & (17,2,4,2)-(S)EDF \\
    \hline
    17 & 2 & 8 & (17,8,3,4)-PDS & 8 & 2 & (17,4,2,1,0)-DPDF & (17,4,2,2,4)-EPDF \\
    \hline
    25 & 2 & 12 & (25,12,5,6)-PDS & 4 & 6 & (25,2,6,3,2)-DPDF & (25,2,6,2,4)-EPDF \\
\hline
    25 & 2 & 12 & (25,12,5,6)-PDS & 6 & 4 & (25,3,4,3,0)-DPDF & (25,3,4,2,6)-EPDF \\
    \hline
    25 & 2 & 12 & (25,12,5,6)-PDS & 8 & 3 & (25,4,3,0,2)-DPDF & (25,4,3,5,4)-EPDF \\
    \hline
    25 & 2 & 12 & (25,12,5,6)-PDS & 12 & 2 & (25,6,2,1,0)-DPDF & (25,6,2,4,6)-EPDF \\
    \hline
     25 & 3 & 8 & (25,8,3,2)-PDS & 6 & 4 & (25,2,4,3,0)-DPDF & (25,2,4,0,2)-EPDF \\
    \hline
    25 & 3 & 8 & (25,8,3,2)-PDS & 12 & 2 & (25,4,2,1,0)-DPDF & (25,4,2,2)-EDF \\
    \hline
     25 & 6 & 4 & (25,4,3,0)-PDS & 12 & 2 & (25,2,2,1,0)-DPDF & (25,2,2,2,0)-EPDF \\
    \hline
    29 & 2 & 14 & (29,14,6,7)-PDS & 4 & 7 & (29,2,7,4,2)-DPDF & (29,2,7,2,5)-EPDF \\
    \hline
    29 & 2 & 14 & (29,14,6,7)-PDS & 14 & 2 & (29,7,2,0,1)-DPDF & (29,7,2,6)-EDF \\
    \hline
    37 & 2 & 18 & (37,18,8,9)-PDS & 4 & 9 & (37,2,9,4)-DDF & (29,2,9,4,5)-EPDF \\
    \hline
    37 & 2 & 18 & (37,18,8,9)-PDS & 6 & 6 & (37,3,6,4,1)-DPDF & (37,3,6,4,8)-EPDF \\
    \hline
    37 & 2 & 18 & (37,18,8,9)-PDS & 12 & 3 & (37,6,3,2,0)-DPDF & (37,6,3,6,9)-EPDF \\
    \hline
    37 & 2 & 18 & (37,18,8,9)-PDS & 18 & 2 & (37,9,2,0,1)-DPDF & (37,9,2,8)-EDF \\
    \hline
    41 & 2 & 20 & (41,20,9,10)-PDS & 4 & 10 & (41,2,10,3,6)-DPDF & (41,2,10,6,4)-EPDF \\
    \hline
    41 & 2 & 20 & (41,20,9,10)-PDS & 8 & 5 & (41,4,5,2)-DDF & (41,4,5,7,8)-EPDF \\
    \hline
    41 & 2 & 20 & (41,20,9,10)-PDS & 10 & 4 & (41,5,4,3,0)-DPDF & (41,5,4,6,10)-EPDF \\
    \hline
    41 & 2 & 20 & (41,20,9,10)-PDS & 20 & 2 & (41,10,2,1,0)-DPDF & (41,10,2,8,10)-EPDF \\
    \hline
    49 & 2 & 24 & (49,24,11,12)-PDS & 4 & 12 & (49,2,12,5,6)-DPDF & (49,2,12,6)-EDF \\
    \hline
    49 & 2 & 24 & (49,24,11,12)-PDS & 6 & 8 & (49,3,8,3,4)-DPDF & (49,3,8,8)-EDF \\
    \hline
    49 & 2 & 24 & (49,24,11,12)-PDS & 8 & 6 & (49,4,6,5,0)-DPDF & (49,4,6,6,12)-EDF \\
    \hline
    49 & 2 & 24 & (49,24,11,12)-PDS & 12 & 4 & (49,6,4,3,0)-DPDF & (49,2,8,12)-EPDF \\
    \hline
    49 & 2 & 24 & (49,24,11,12)-PDS & 16 & 3 & (49,8,3,2,0)-DPDF & (49,8,3,9,12)-EPDF \\
    \hline
    49 & 2 & 24 & (49,24,11,12)-PDS & 24 & 2 & (49,12,2,1,0)-DPDF & (49,12,2,10,12)-EPDF \\
    \hline
     49 & 4 & 12 & (49,12,5,2)-PDS & 8 & 6 & (49,2,6,5,0)-DPDF &(49,2,6,0,2)-EPDF \\
    \hline
    49 & 4 & 12 & (49,12,5,2)-PDS & 16 & 3 & (49,4,3,2,0)-DPDF & (49,4,3,3,2)-EPDF \\
    \hline
    49 & 4 & 12 & (49,12,5,2)-PDS & 24 & 2 & (49,6,2,1,0)-DPDF & (49,6,2,4,2)-EPDF \\
    \hline
     49 & 8 & 6 & (49,8,5,0)-PDS & 16 & 3 &  (49,2,3,2,0)-DPDF & (49,2,3,3,0)-EPDF \\
    \hline
    49 & 8 & 6 & (49,8,5,0)-PDS & 24 & 2 &  (49,3,2,1,0)-DPDF & (49,3,2,4,0)-EPDF \\
    \hline
    53 & 2 & 26 & (53,26,12,13)-PDS & 4 & 13 & (53,2,13,4,8)-DPDF & (53,2,13,8,5)-EPDF \\
    \hline
    53 & 2 & 26 & (53,26,12,13)-PDS & 26 & 2 & (53,13,2,0,1)-DPDF & (53,2,13,12)-EDF \\
    \hline
    61 & 2 & 30 & (61,30,14,15)-PDS & 4 & 15 & (61,2,15,8,6)-DPDF & (61,2,15,6,9)-EPDF \\
    \hline
    61 & 2 & 30 & (61,30,14,15)-PDS & 6 & 10 & (61,3,10,2,7)-DPDF & (61,3,10,12,8)-EPDF \\
    \hline
    61 & 2 & 30 & (61,30,14,15)-PDS & 10 & 6 & (61,5,6,4,1)-DPDF & (61,5,6,10,14)-EPDF \\
    \hline
    61 & 2 & 30 & (61,30,14,15)-PDS & 12 & 5 & (61,6,5,2)-DDF & (61,6,5,12,13)-EPDF \\
    \hline
    61 & 2 & 30 & (61,30,14,15)-PDS & 20 & 3 & (61,10,3,2,0)-DPDF & (61,6,5,12,15)-EPDF \\
    \hline
    61 & 2 & 30 & (61,30,14,15)-PDS & 30 & 2 & (61,15,2,0,1)-DPDF & (61,15,2,14)-EDF \\
    \hline
     64 & 3 & 21 & (64,21,8,6)-PDS & 9 & 7 & (64,3,7,6,0)-DPDF & (64,3,7,2,6)-EPDF \\
    \hline
    64 & 3 & 21 & (64,21,8,6)-PDS & 21 & 3 & (64,7,3,2,0)-DPDF & (64,7,3,6)-EDF \\
    \hline
    73 & 2 & 36 & (73,36,17,18)-PDS & 4 & 18 & (73,2,18,9,8)-DPDF & (73,2,18,8,10)-EPDF \\
    \hline
    73 & 2 & 36 & (73,36,17,18)-PDS & 6 & 12 & (73,3,18,7,4)-DPDF & (73,3,12,10,14)-EPDF \\
    \hline
    73 & 2 & 36 & (73,36,17,18)-PDS & 8 & 9 & (73,4,9,4)-DDF & (73,4,9,13,14)-EPDF \\
    \hline
    73 & 2 & 36 & (73,36,17,18)-PDS & 12 & 6 & (73,6,6,3,2)-DPDF & (73,6,6,14,16)-EPDF \\
    \hline
    73 & 2 & 36 & (73,36,17,18)-PDS & 18 & 4 & (73,9,4,1,2)-DPDF & (73,9,4,16)-EDF \\
    \hline
    73 & 2 & 36 & (73,36,17,18)-PDS & 24 & 3 & (73,12,3,0,2)-DPDF & (73,12,3,17,16)-EPDF \\
    \hline
    73 & 2 & 36 & (73,36,17,18)-PDS & 36 & 2 & (73,18,2,1,0)-DPDF & (73,18,2,16,18)-EPDF \\
    \hline
    81 & 2 & 40 & (81,40,19,20)-PDS & 4 & 20 & (81,2,20,7,12)-DPDF & (81,2,20,12,8)-EPDF \\
    \hline
    81 & 2 & 40 & (81,40,19,20)-PDS & 8 & 10 & (81,4,10,5,4)-DPDF & (81,4,10,14,16)-EPDF \\
    \hline
    81 & 2 & 40 & (81,40,19,20)-PDS & 10 & 8 & (81,5,8,7,0)-DPDF & (81,5,8,12,20)-EPDF \\
    \hline
    81 & 2 & 40 & (81,40,19,20)-PDS & 16 & 5 & (81,8,5,0,4)-DPDF & (81,8,5,19,16)-EPDF \\
    \hline
    81 & 2 & 40 & (81,40,19,20)-PDS & 20 & 4 & (81,10,4,3,0)-DPDF & (81,10,4,16,20)-EPDF \\
    \hline
    81 & 2 & 40 & (81,40,19,20)-PDS & 40 & 2 & (81,20,2,1,0)-DPDF & (81,10,4,18,20)-EPDF \\
    \hline
    81 & 4 & 20 & (81,20,1,6)-PDS & 40 & 2 & (81,10,2,1,0)-DPDF & (81,10,2,0,6)-EPDF \\
    \hline
    89 & 2 & 44 & (89,44,21,22)-PDS & 4 & 22 & (89,2,22,9,12)-DPDF & (89,2,22,12,10)-EPDF \\
    \hline
    89 & 2 & 44 & (89,44,21,22)-PDS & 8 & 11 & (89,4,11,2,8)-DPDF & (89,4,11,19,14)-EPDF \\
    \hline
    89 & 2 & 44 & (89,44,21,22)-PDS & 22 & 4 & (89,11,4,1,2)-DPDF & (89,11,4,20)-EDF \\
    \hline
    89 & 2 & 44 & (89,44,21,22)-PDS & 44 & 2 & (89,11,4,1,0)-DPDF & (89,11,20,22)-EPDF \\
    \hline
    97 & 2 & 48 & (97,48,23,24)-PDS & 4 & 24 & (97,2,24,9,14)-DPDF & (97,2,24,14,10)-EPDF \\
    \hline
    97 & 2 & 48 & (97,48,23,24)-PDS & 6 & 16 & (97,3,16,5,10)-DPDF & (97,3,16,18,14)-EPDF \\
    \hline
    97 & 2 & 48 & (97,48,23,24)-PDS & 8 & 12 & (97,4,12,3,8)-DPDF & (97,4,12,20,16)-EPDF \\
    \hline
    97 & 2 & 48 & (97,48,23,24)-PDS & 12 & 8 & (97,6,8,3,4)-DPDF & (97,6,8,20)-EDF \\
    \hline
    97 & 2 & 48 & (97,48,23,24)-PDS & 16 & 6 &  (97,8,6,3,2)-DPDF & (97,8,6,20,22)-EPDF \\
    \hline
    97 & 2 & 48 & (97,48,23,24)-PDS & 24 & 4 &  (97,12,4,1,2)-DPDF & (97,12,4,22)-EDF \\
    \hline
    97 & 2 & 48 & (97,48,23,24)-PDS & 32 & 3 &  (97,16,3,0,2)-DPDF & (97,16,3,23,22)-EPDF \\
    \hline
    97 & 2 & 48 & (97,48,23,24)-PDS & 48 & 2 &  (97,24,2,1,0)-DPDF & (97,24,2,22,24)-EPDF \\
    \hline
    101 & 2 & 50 & (101,50,24,25)-PDS & 4 & 25 &  (101,2,25,12)-DDF & (101,2,25,12,13)-EPDF \\
    \hline
    101 & 2 & 50 & (101,50,24,25)-PDS & 10 & 10 &  (101,5,10,4,5)-DPDF & (101,5,10,20)-EDF \\
    \hline
    101 & 2 & 50 & (101,50,24,25)-PDS & 20 & 5 &  (101,10,5,0,4)-DPDF & (101,10,5,24,21)-EPDF \\
    \hline
    101 & 2 & 50 & (101,50,24,25)-PDS & 50 & 2 & (101,25,2,0,1)-DPDF & (101,25,2,24)-EDF \\
    \hline
    109 & 2 & 54 & (109,54,26,27)-PDS & 4 & 27 & (109,2,27,12,14)-DPDF & (109,2,27,14,13)-EPDF \\
    \hline
    109 & 2 & 54 & (109,54,26,27)-PDS & 6 & 18 & (109,3,18,8,9)-DPDF & (109,3,18,18)-EDF \\
    \hline
    109 & 2 & 54 & (109,54,26,27)-PDS & 12 & 9 & (109,6,9,4)-DDF & (109,6,9,22,23)-EPDF \\
    \hline
    109 & 2 & 54 & (109,54,26,27)-PDS & 18 & 6 & (109,9,6,2,3)-DPDF & (109,6,9,24)-EDF \\
    \hline
    109 & 2 & 54 & (109,54,26,27)-PDS & 36 & 3 & (109,18,3,0,2)-DPDF & (109,18,3,26,25)-EPDF \\
    \hline
    109 & 2 & 54 & (109,54,26,27)-PDS & 54 & 2 & (109,27,2,0,1)-DPDF & (109,27,2,26)-EDF \\
    \hline
    113 & 2 & 56 & (113,56,27,28)-PDS & 4 & 28 & (113,2,28,15,12)-DPDF & (113,2,28,12,16)-EPDF \\
    \hline
    113 & 2 & 56 & (113,56,27,28)-PDS & 8 & 14 & (113,4,14,5,8)-DPDF & (113,2,28,22,20)-EPDF \\
    \hline
    113 & 2 & 56 & (113,56,27,28)-PDS & 14 & 8 & (113,7,8,3,4)-DPDF & (113,7,8,24)-EDF \\
    \hline
    113 & 2 & 56 & (113,56,27,28)-PDS & 16 & 7 & (113,8,7,2,4)-DPDF & (113,8,7,25,24)-EPDF \\
    \hline
    113 & 2 & 56 & (113,56,27,28)-PDS & 28 & 4 & (113,14,4,3,0)-DPDF & (113,14,4,24,28)-EPDF \\
    \hline
    113 & 2 & 56 & (113,56,27,28)-PDS & 56 & 2 & (113,28,2,1,0)-DPDF & (113,28,2,26,28)-EPDF \\
    \hline
    121 & 2 & 60 & (121,60,29,30)-PDS & 4 & 30 & (121,2,30,17,12)-DPDF & (121,2,30,12,18)-EPDF \\
    \hline
    121 & 2 & 60 & (121,60,29,30)-PDS & 6 & 20 & (121,3,20,13,6)-DPDF & (121,3,20,16,24)-EPDF \\
    \hline
    121 & 2 & 60 & (121,60,29,30)-PDS & 8 & 15 & (121,4,15,10,4)-DPDF & (121,4,15,19,26)-EPDF \\
    \hline
    121 & 2 & 60 & (121,60,29,30)-PDS & 10 & 12 & (121,5,12,5,6)-DPDF & (121,5,12,24)-EDF \\
    \hline
    121 & 2 & 60 & (121,60,29,30)-PDS & 12 & 10 & (121,6,10,9,0)-DPDF & (121,6,10,20,30)-EPDF \\
    \hline
    121 & 2 & 60 & (121,60,29,30)-PDS & 20 & 6 & (121,10,6,5,0)-DPDF & (121,10,6,24,30)-EPDF \\
    \hline
    121 & 2 & 60 & (121,60,29,30)-PDS & 24 & 5 & (121,12,5,4,0)-DPDF & (121,12,5,25,30)-EPDF \\    \hline
    121 & 2 & 60 & (121,60,29,30)-PDS & 30 & 4 & (121,15,4,1,2)-DPDF & (121,15,4,28)-EDF \\
    \hline
    121 & 2 & 60 & (121,60,29,30)-PDS & 40 & 3 & (121,20,3,2,0)-DPDF & (121,20,3,27,30)-EPDF \\
    \hline
    121 & 2 & 60 & (121,60,29,30)-PDS & 60 & 2 & (121,30,2,1,0)-DPDF & (121,30,2,28,30)-EPDF \\
    \hline
    121 & 3 & 40 & (121,40,15,12)-PDS & 6 & 20 & (121,2,20,11,4)-DPDF & (121,2,20,4,8)-EPDF \\
    \hline
    121 & 3 & 40 & (121,40,15,12)-PDS & 12 & 10 & (121,4,10,9,0)-DPDF & (121,4,10,6,12)-EPDF \\
    \hline
    121 & 3 & 40 & (121,40,15,12)-PDS & 15 & 8 & (121,5,8,3,2)-DPDF & (121,5,8,12,10)-EPDF \\
    \hline
    121 & 3 & 40 & (121,40,15,12)-PDS & 24 & 5 & (121,8,5,4,0)-DPDF & (121,8,5,11,12)-EPDF \\
    \hline
    121 & 3 & 40 & (121,40,15,12)-PDS & 30 & 4 & (121,10,4,3,0)-DPDF & (121,10,4,12)-EDF \\
    \hline
    121 & 3 & 40 & (121,40,15,12)-PDS & 60 & 2 & (121,20,2,1,0)-DPDF & (121,20,2,14,12)-EPDF \\
    \hline
    121 & 4 & 30 & (121,30,11,6)-PDS & 12 & 10 & (121,3,10,9,0)-DPDF & (121,3,10,2,6)-EPDF \\
    \hline
    121 & 4 & 30 & (121,30,11,6)-PDS & 24 & 5 & (121,3,10,4,0)-DPDF & (121,3,10,7,6)-EPDF \\
    \hline
    121 & 4 & 30 & (121,30,11,6)-PDS & 60 & 2 & (121,15,2,1,0)-DPDF & (121,15,2,10,6)-EPDF \\
    \hline
    121 & 6 & 20 & (121,20,9,2)-PDS & 12 & 10 & (121,2,10,9,0)-DPDF & (121,2,10,0,2)-EPDF \\
    \hline
    121 & 6 & 20 & (121,20,9,2)-PDS & 24 & 5 & (121,4,5,4,0)-DPDF & (121,4,5,5,2)-EPDF \\
    \hline
    121 & 6 & 20 & (121,20,9,2)-PDS & 60 & 2 & (121,10,2,1,0)-DPDF & (121,10,2,8,2)-EPDF \\
    \hline
\end{longtable}
}
\end{center}

%\section*{Acknowlegements}
%We thank the anonymous referees for their helpful comments.

\end{document}